\newtheorem{theorem}{Theorem}[section]
\newtheorem{lemma}[theorem]{Lemma}
\newtheorem{proposition}[theorem]{Proposition}
\newtheorem{corollary}[theorem]{Corollary}
\theoremstyle{definition}
\theoremstyle{remark}
\numberwithin{equation}{section}
\def\R{\mathbb{R}}
\def\Rp{\R_+}
\def\Rpn{\Rp^n}
\def\Rpnn{\Rp^{n\times n}}
\def\crit{{\mathcal C}}
\def\Crit{{\mathcal C}}
\def\digr{{\mathcal D}}
\def\Digr{{\mathcal D}}
\def\cT{{\mathcal T}}
\def\bez{\backslash}
\def\proj{{\mathcal P}}
\def\nacht{{\mathcal N}}
\def\ultim{{\mathcal U}}
\def\ulabel{\circ}
\def\modd{\operatorname{mod}}
\def\diag{\operatorname{diag}}
\def\supp{\operatorname{supp}}
\def\begp{\operatorname{beg}}
\def\intp{\operatorname{int}}
\def\endp{\operatorname{end}}
\def\hlabel{h}
\def\ulteq{\overset{T}{=}}
\def\ultleq{\overset{T}{\leq}}
\def\connect{\rightarrow}
\def\strconnect{\rightrightarrows}
\def\Nca{N_c(A)}
\def\Eca{E_c(A)}
\begin{document}

% \title[short text for running head]{full title}
\title{CSR expansions of matrix
powers in max algebra}

%    Only \author and \address are required; other information is
%    optional.  Remove any unused author tags.

%    author one information
% \author[short version for running head]{name for top of paper}
\author{Serge{\u{\i}} Sergeev}
\address{University of Birmingham, School of Mathematics,
Watson Building, Edgbaston B15 2TT, UK.}
%\curraddr{}
\email{sergiej@gmail.com}
\thanks{This work was supported
by EPSRC grant RRAH12809 and RFBR grant 08-01-00601.}

%    author two information
\author{Hans Schneider}
\address{Department of Mathematics, University of
Wisconsin-Madison, Madison, Wisconsin 53706, USA.}
%\curraddr{}
\email{hans@math.wisc.edu}
%\thanks{}

%    \subjclass is required.
\subjclass[2010]{Primary:15A80, 15A23, 15A21} 
%    The 2010 edition of the Mathematics Subject Classification is
%    now available.  If you are citing a classification from the
%    new scheme, use the following input coding instead.
%\subjclass[2010]{Primary }

\date{}

%\dedicatory{}

%    Abstract is required.
\begin{abstract}
We study the behavior of max-algebraic powers of a reducible
nonnegative matrix $A\in\Rpnn$. We show that for $t\geq 3n^2$, the
powers $A^t$ can be expanded in max-algebraic sums of terms of the
form $CS^tR$, where $C$ and $R$ are extracted from columns and rows
of certain Kleene stars, and $S$ is diagonally similar to a Boolean
matrix. We study the properties of individual terms and show that
all terms, for a given $t\geq 3n^2$, can be found in $O(n^4\log n)$
operations. We show that the powers have a well-defined ultimate
behavior, where certain terms are totally or partially suppressed,
thus leading to ultimate $CS^tR$ terms and the corresponding
ultimate expansion. We apply this expansion to the question whether
$\{A^ty,\; t\geq 0\}$ is ultimately linear periodic for each
starting vector $y$, showing that this question can be also answered
in $O(n^4\log n)$ time. We give examples illustrating our main
results.
\end{abstract}

\maketitle

\section{Introduction}

By \textit{max algebra} we understand the analogue of linear algebra
developed over the max-times semiring $\R_{\max,\times}$ which is
the set of nonnegative numbers $\R_+$ equipped with the operations
of ``addition'' $a\oplus b:=\max(a,b)$ and the ordinary
multiplication $a\otimes b:=a\times b$. Zero and unity of this
semiring coincide with the usual $0$ and $1$. The operations of the
semiring are extended to the nonnegative matrices and vectors in the
same way as in conventional linear algebra. That is if $A=(a_{ij})$,
$B=(b_{ij})$ and $C=(c_{ij})$ are matrices of compatible sizes with
entries from $\R_+$,
we write $C=A\oplus B$ if $c_{ij}=a_{ij}\oplus b_{ij}$ for all $i,j$ and $%
C=A\otimes B$ if $c_{ij}=\bigoplus_k a_{ik} b_{kj}=\max_{k}(a_{ik}
b_{kj})$ for all $i,j$.

If $A$ is a square matrix over $\R_+$ then the iterated product
$A\otimes A\otimes
...\otimes A$ in which the symbol $A$ appears $k$ times will be denoted by $%
A^{k}$. These are the {\em max-algebraic powers} of nonnegative
matrices, the main object of our study.
%If $\alpha \in\Rp$
%then $\alpha\otimes A=\left( \alpha a_{ij}\right) $.

The {\em max-plus semiring} $\R_{\max,+}=(\R\cup\{-\infty\},
\oplus=\max,\otimes=+)$, developed over the set of real numbers $\R$
with adjoined element $-\infty$ and the ordinary addition playing
the role of multiplication, is another isomorphic ``realization'' of
max algebra.  In particular, $x\mapsto\exp(x)$ yields an isomorphism
between $\R_{\max,+}$ and $\R_{\max,\times}$. In the max-plus
setting, the zero element is $-\infty$ and the unity is $0$.

The main results of this paper are formulated in the max-times
setting, since some important connections with nonnegative and
Boolean matrices are more transparent there. However, the max-plus
setting is left for the examples in the last section, in order to
appeal to the readers who work with max-plus algebra and
applications in scheduling problems and discrete event systems
\cite{BCOQ,But:10,CG:79,HOW:05}.

The Cyclicity Theorem is a classical result of max algebra, in the
max-times setting it means that the max-algebraic powers of any
irreducible nonnegative matrix are ultimately periodic (up to a
scalar multiple), with the period equal to cyclicity of the critical
graph. This theorem can be found in Heidergott et al. \cite[Theorem
3.9]{HOW:05}, see also Baccelli et al. \cite[Theorem 3.109]{BCOQ}
and Cuninghame-Green \cite[Theorem 27-6]{CG:79} (all stated in the
max-plus setting). However, the length of the pre-periodic part can
be arbitrarily large and the result does not have an evident
extension to reducible matrices.

The behavior of max-algebraic powers of reducible matrices relies on
connections between their strongly connected components and the
hierarchy of their max-algebraic eigenvalues. This has been well
described in a monograph of Gavalec \cite{Gav:04}, see also
\cite{BdS,Gav-00,Mol-05}.

The relation of the Cyclicity Theorem to the periodicity of Boolean
matrices is understood but not commonly and explicitly used in max
algebra. For instance the construction of cyclic classes
\cite{BV-73,BR} appears in the proof of Lemma 3.3 in \cite{HOW:05}
(without references to literature on Boolean matrices). This
relation becomes particularly apparent after application of a
certain $D^{-1}AD$ similarity scaling called visualization in
\cite{Ser-09,SSB}, see also \cite{ED-99,ED-01}. Semanc\'{\i}kov\'{a}
\cite{Sem-06,Sem-07} realized that the cyclic classes of Boolean
matrices are helpful in treating computational complexity of the
periodicity problems in max-min algebra, with analogous
max-algebraic applications in mind \cite{Blanka:PC}.

A result by Nachtigall \cite{Nacht} states that, though the length
of the preperiodic part cannot be polynomially bounded, the behavior
of matrix powers after $O(n^2)$ can be represented as max-algebraic
sums of matrices from certain periodic sequences. Moln\'{a}rov\'{a}
\cite{Mol-03} studies this {\em Nachtigall expansion} further,
showing that for a given matrix power after $O(n^2)$ the
representing Nachtigall matrices can be computed in $O(n^5)$ time.

In the general reducible case, the sequences of entries
$\{a_{ij}^t,t\geq 0\}$ of $A^t$ are ultimately generalized periodic
\cite{BdS,Mol-05}, meaning that for $t\geq T$ where $T$ is
sufficiently large, they may consist of several ultimately periodic
subsequences which grow with different rates. Gavalec
\cite{Gav-00,Gav:04} showed that deciding whether $\{a_{ij}^t,t\geq
0\}$ is ultimately periodic is in general NP-hard. In a related
study, Butkovi\v{c}~et~al.~\cite{But:10,BCG} considered {\em robust
matrices}, such that for any given $x$ the sequence $\{A^tx\}$ is
ultimately periodic with period $1$. The conditions formulated in
\cite{BCG} can be verified in polynomial time, which suggests that
such ``global'' periodicity questions must be tractable.

The main goal of this paper is to find a common ground for the above
pieces of knowledge on matrix periodicity in max algebra. We
introduce the concept of {\em $CSR$ expansion}, in which a matrix
power $A^t$ is represented as the max-algebraic sum of terms of the
form $CS^tR$, called $CSR$ products. Here $C$ and $R$ have been
extracted from columns and rows of certain Kleene star (the same for
both), and $S$ can be made Boolean by a similarity scaling
$D^{-1}SD$.  The matrix $CR$ appeared previously as spectral
projector \cite{BCOQ, CGG-99}, and $S$ typically arises as the
incidence matrix of a critical graph.

After giving necessary preliminaries in Section \ref{s:prel}, we
start in Section \ref{s:projector} by studying the $CSR$ products.
We show that they form a cyclic group and describe the action of
this group on the underlying Kleene star. We also emphasize the path
sense of these operators, see Theorem~\ref{pathology0}, thus
providing connection to the approach of
\cite{BdS,Gav-00,Gav:04,Mol-05,Sem-06,Sem-07}. In Section
\ref{s:nachtexp} we establish the algebraic form of the Nachtigall
expansion which controls the powers $A^t$ after $t\sim O(n^2)$. See
Theorem~\ref{t:nacht}. In Section \ref{s:ultexp} we show that at
large $t$ certain Nachtigall terms become totally or partially
suppressed by heavier ones, leading to the ultimate expansion of
matrix powers. This result, see Theorem~\ref{t:serg-sch}, can be
understood as a generalization of the Cyclicity Theorem to reducible
case. In Section \ref{s:comp} we treat the computational complexity
of computing the terms of $CSR$ expansions for a given matrix power,
showing that in general this can be done in no more than $O(n^4\log
n)$ operations.
%The main idea, very similar to \cite{Sem-06,Sem-07}
%and the same as in \cite{Ser-09}, is to use the matrix squaring to
%reach a power after $O(n^2)$, and to use the relations between
%cyclic classes to determine the factors in $CSR$ terms.
In Section \ref{s:totalper}, which extends the results of
Butkovi\v{c} et al. \cite{But:10,BCG} on robust matrices, we describe {\em
orbit periodic} matrices $A$, i.e., such that the orbit $A^ty$ is
ultimately linear periodic for all initial vectors $y$, see
Theorem~\ref{t:totalper}. We use the ultimate expansion to show that
the conditions for orbit periodicity can be verified in no more than
$O(n^4\log n)$ operations, see Theorem~\ref{t:compcomp-totper} and
its Corollary. We conclude by Section \ref{s:ex} which contains some
examples given in the max-plus setting.

\section{Preliminaries}
\label{s:prel}

In this section we recall some important notions of max algebra.
These are the maximum cycle geometric mean, the critical graph and
the Kleene star. We close the section with nonnegative similarity
scalings and cyclic classes of the critical graph.

%A {\em digraph} $G$ is a pair $(N,E)$, where $N$ is a set of nodes,
%and $E$ is a set of edges between the nodes meaning that $E\subseteq
%\{(i,j)\mid i,j\in N\}$. A weighted digraph is a digraph that has a
%real value, called {\em weight}, associated with each edge.
Let $A=(a_{ij})\in\Rpnn$. The weighted digraph
$\digr(A)=(N(A),E(A))$, with the set of nodes $N(A)=\{1,\ldots,n\}$
and the set of edges $E(A)=\{(i,j)\mid a_{ij}\neq 0\}$ with weights
$w(i,j)=a_{ij}$, is called the {\em digraph associated} with $A$.
Suppose that $P =(i_{1},...,i_{p})$ is a path in $D_A$, then the \textit{%
weight} of $P$ is defined to be $w(P)=
a_{i_{1}i_{2}}a_{i_{2}i_{3}}\ldots a_{i_{p-1}i_{p}}$ if $p>1$, and
$1$ if $p=1$. If $i_1=i_p$ then $P$ is called a cycle. The {\em length} of $P$, denoted
by $l(P)$, is the number of edges in $P$ (it equals $p-1$ here). 

When any two nodes in $\digr(A)$ can be connected to each other by
paths,% of nonzero weight
the matrix $A$ is called {\em irreducible}.
Otherwise, it is called {\em reducible}. In the reducible case,
there are some (maximal) strongly connected components of
$\digr(A)$, and a number of nodes that do not belong to any cycle.
We will refer to such nodes as to {\em trivial} components of
$\digr(A)$.

The \textit{maximum cycle geometric mean} of $A$, further denoted by
$\lambda(A)$, is defined by the formula
\begin{equation}
\lambda (A)=\max_{P_c} (w(P_c))^{1/k},  \label{mcm}
\end{equation}%
where the maximization is taken over all cycles
$P_c=(i_1,\ldots,i_k)$, for $k=1,\ldots,n,$ in the digraph
$\digr(A)$.
%\begin{equation*}
%\mu (P_c)=w(P_c)^{1/k}  \label{cm}
%\end{equation*}%
%denotes the \textit{geometric mean} of the cycle
%$P_c =(i_{1},...,i_{k},i_{1})$.

The Cyclicity Theorem (\cite[Theorem 3.9]{HOW:05}, see also
\cite{BCOQ,But:10,CG:79}) states that if $A$ is irreducible then after a
certain time $T(A)$, there exists a number $\gamma$ such that
$A^{t+\gamma}=\lambda^{\gamma}(A) A^t$ for all $t\geq T(A)$. Thus
$\lambda$ is the ultimate growth rate of matrix powers in this case.
If $\lambda=1$ then $A^{t+\gamma}=A^t$ for $t\geq T(A)$, in which
case we say that $\{A^t,\;t\geq 0\}$ is {\em ultimately periodic}.

Remarkably $\lambda(A)$ is also the largest max-algebraic eigenvalue
of $A$, meaning the largest number $\lambda$ for which there exists
a nonzero $x\in\Rpn$ such that $A\otimes x=\lambda x$, see
\cite{BCOQ,But-03,But:10,CG:79,HOW:05} and references therein.

The operation of taking the maximal cycle geometric mean (m.c.g.m.
for short) is homogeneous: $\lambda(\alpha A)=\alpha\lambda(A)$.
Hence any matrix, which has $\lambda(A)\neq 0$ meaning that
$\digr(A)$ is not acyclic, can be scaled so that
$\lambda(A/\lambda(A))=1$. Following \cite{But-03}, matrix
$A\in\Rpnn$ with $\lambda(A)=1$ will be called {\em definite}.

A cycle $P_c=(i_1,\ldots,i_k)$ in $\digr(A)$ is called {\em
critical}, if $(w(P_c))^{1/k}=\lambda(A)$. Every node and edge that
belongs to a critical cycle is called {\em critical}. The set of
critical nodes is denoted by $\Nca$, the set of critical edges is
denoted by $\Eca$.  The {\em critical digraph} of $A$, further
denoted by $\crit(A)=(\Nca,\Eca)$, is the digraph which consists of
all critical nodes and critical edges of $\digr(A)$.

The cyclicity of an irreducible graph is defined as the g.c.d.
(greatest common divisor) of the lengths of all its simple cycles.
The critical graph defined above may have several strongly connected
components, and in this case the cyclicity is the l.c.m. (least
common multiple) of their cyclicities. This gives the number
$\gamma$ which appears in the Cyclicity Theorem \cite{BCOQ,HOW:05},
and it can be shown that the ultimate period cannot be less than
$\gamma$ (in particular, this follows from the approach of the
present paper).

There is no obvious subtraction in max algebra, however we have an
analogue of $(I-A)^{-1}$ defined by
\begin{equation}
\label{def-kls} A^*:=I\oplus A\oplus A^2\oplus\ldots,
\end{equation}
where $I$ is the identity matrix. This series converges to a finite
matrix if and only if $\lambda(A)\leq 1$
\cite{BCOQ,But-03,CG:79,HOW:05}, and then this matrix
$A^*=(a^*_{ij})$ is called the {\em Kleene star} of $A$. This matrix
has properties $(A^*)^2=A^*$ and, clearly, $A^*\geq I$. It is
important that the entries of max-algebraic powers $A^k=(a^k_{ij})$
express the maximal weights of certain paths: $a^k_{ij}$ is equal to
the greatest weight of paths $P$ that connect $i$ to $j$ and have
length $k$. The entry $a^*_{ij}$ for $i\neq j$ is equal to the
greatest weight of paths that connect $i$ to $j$ with no restriction
on their lengths.

As in the nonnegative linear algebra, we have only few invertible
matrices, in the sense of the existence of (nonnegative) $A^{-1}$
such that $A^{-1}\otimes A=A\otimes A^{-1}=I$. More precisely, such
matrices can be diagonal matrices
\begin{equation}
X=\diag(x):=
\begin{pmatrix}
x_1 &\ldots &0\\
\vdots & \ddots &\vdots\\
0 & \ldots & x_n
\end{pmatrix}
\end{equation}
for a positive $x=(x_1,\ldots,x_n)$, or {\em monomial matrices}
obtained from the diagonal matrices by permutations of rows or
columns. Nevertheless, such matrices give rise to very convenient
{\em diagonal similarity scalings} $A\mapsto X^{-1}AX$. Such
transformations do not change $\lambda(A)$ and $\crit(A)$
\cite{ES-73}. They commute with max-algebraic multiplication of
matrices and hence with the operation of taking the Kleene star.
Geometrically, they correspond to automorphisms of $\Rpn$, both in
the case of max algebra and in the case of nonnegative linear
algebra. The importance of such scalings in max algebra was
emphasized already in \cite{CG:79}, Ch.~28.

By an observation of Fiedler and Pt\'{a}k \cite{FP-67}, for any
definite matrix $A$ there is a scaling $X$ such that $X^{-1}AX$ is
{\em visualized} meaning that all critical entries of the matrix
equal $1$ and all the rest are less than or equal to $1$.  It is
also possible to make $X^{-1}AX$ {\em strictly visualised}
\cite{SSB}, meaning that only critical entries are equal to $1$. If
a matrix is visualised, or strictly visualised, the same is true for
all powers of this matrix, meaning that the critical graph can be
seen as a Boolean matrix that ``lives by itself''. Thus there is a
clear connection to the powers of Boolean matrices.

The periodicity of powers of Boolean matrices is ruled by {\em
cyclic classes} \cite{BV-73} also known as {\em imprimitivity sets}
\cite{BR}, which we explain below. We note that this notion appeared
already in a work of Frobenius \cite{Fro1912}.

\begin{proposition}[e.g. Brualdi-Ryser \cite{BR}]
\label{ryser} Let $G=(N,E)$ be a strongly connected digraph with
cyclicity $\gamma_G$.Then the lengths of any two paths connecting
$i\in N$ to $j\in N$ (with $i,j$ fixed) are congruent modulo
$\gamma_G$.
\end{proposition}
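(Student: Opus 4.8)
The plan is to reduce the statement to a fact about cycles. Let $i,j\in N$ be fixed and let $P$ and $Q$ be two paths from $i$ to $j$, of lengths $l(P)$ and $l(Q)$. Since $G$ is strongly connected, there is a path $W$ from $j$ back to $i$; choose one, of length $m$. Then $P$ followed by $W$ is a closed walk through $i$ of length $l(P)+m$, and likewise $Q$ followed by $W$ is a closed walk of length $l(Q)+m$. The key reduction is therefore: express the length of an arbitrary closed walk in terms of the lengths of simple cycles, and then use that $\gamma_G$ divides each of those.

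The first step is to establish that the length of every closed walk in a strongly connected digraph is a nonnegative integer combination of the lengths of the simple cycles of $G$. This is the standard decomposition: traverse the closed walk and, whenever a node is repeated, excise the intervening closed sub-walk; iterating, the walk breaks up into a collection of simple cycles whose lengths sum to the length of the original walk. (One should phrase the induction on $l$ carefully, but this is routine.) Applying this to the two closed walks above, we get $l(P)+m=\sum_a c_a$ and $l(Q)+m=\sum_b d_b$, where each $c_a,d_b$ is the length of a simple cycle of $G$. By the definition of $\gamma_G$ as the g.c.d. of all simple cycle lengths, $\gamma_G\mid c_a$ and $\gamma_G\mid d_b$ for every $a,b$, hence $\gamma_G\mid l(P)+m$ and $\gamma_G\mid l(Q)+m$. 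Subtracting, $\gamma_G\mid l(P)-l(Q)$, which is exactly the congruence $l(P)\equiv l(Q)\pmod{\gamma_G}$.

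The only mild subtlety — and the step I expect to need the most care — is the closed-walk decomposition into simple cycles, specifically making sure the excision argument terminates and that the summands really are \emph{simple} cycles (so that their lengths are among those whose g.c.d. defines $\gamma_G$), rather than merely closed walks. A clean way to handle this is strong induction on the length of the closed walk: if the walk is already a simple cycle we are done; otherwise some internal node repeats, split off the closed sub-walk between the two occurrences, and apply the inductive hypothesis to both pieces, which are strictly shorter closed walks (and still closed walks in $G$). Everything else is immediate from the definitions of path weight, length, and cyclicity recalled above, together with elementary divisibility. For the case $p=1$ (a node fixed to itself by trivial paths) the statement is vacuous or trivial, and I would dispose of it in one line.
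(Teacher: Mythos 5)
Your proof is correct, and the paper itself gives no proof of this proposition --- it is cited to Brualdi--Ryser --- so there is nothing to diverge from; your argument (close up each path with a common return path, decompose the resulting closed walks into simple cycles, and use that $\gamma_G$ divides every simple cycle length) is exactly the standard proof found in the cited literature. The one step you flag as delicate, the excision/induction decomposition of a closed walk into simple cycles, is indeed the only nontrivial point and your treatment of it is sound.
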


Proposition~\ref{ryser} implies that the following equivalence
relation can be defined: $i\sim j$ if there exists a path $P$ from
$i$ to $j$ such that $l(P)\equiv 0(\modd \gamma_G)$. The equivalence
classes of $G$ with respect to this relation are called {\em cyclic
classes} \cite{BV-73, Sem-06, Sem-07}. The cyclic class of $i$ will
be denoted by $[i]$.

Consider the following {\em access relations} between cyclic
classes: $[i]\to_t[j]$ if there exists a path $P$ from a node in
$[i]$ to a node in $[j]$ such that $l(P)\equiv t(\modd \gamma_G)$.
In this case, a path $P$ with $l(P)\equiv t(\modd \gamma_G)$ exists
between any node in $[i]$ and any node in $[j]$. Further, by
Proposition~\ref{ryser} the length of any path between a node in
$[i]$ and a node in $[j]$ is congruent to $t$, so the relation
$[i]\to_t [j]$ is well-defined.

Cyclic classes can be computed in $O(|E|)$ time by Balcer-Veinott
digraph condensation \cite{BV-73}, where $|E|$ denotes the number of
edges in $G$. At each step of this algorithm, we look for all edges
which issue from a certain node $i$, and condense all end nodes of
these edges into a single node. Another efficient algorithm is
described in \cite{BR}.

Let $S=(s_{ij})$ be the {\em incidence matrix} of $G$, meaning that
$s_{ij}=1$ if $(i,j)\in E$ and $s_{ij}=0$ otherwise.

The ultimate periodicity of such Boolean matrices has been well
studied. If $\gamma_G=1$ then the periodicity of $S^t$ starts latest
after the {\em Wielandt number} $W(n):=(n-1)^2+1$ \cite{BR,Kim:82}.
This bound is sharp and is due to Wielandt \cite{Wie-50}.

If $\gamma>1$ then there are even better sharp bounds due to
Schwartz \cite{Sch-70}. Assume w.l.o.g. that $S$ is irreducible, and
let $n=\alpha\gamma+t$. If $\alpha>1$ then the periodicity starts at
most after $W(\alpha)\gamma+t$ which does not exceed
$\frac{n^2}{\gamma}+\gamma$. If $\alpha=1$ then it starts almost
``straightaway'', after at most $\max(1,t)$.

%A precise description of this method can be found in \cite{BV-73,BR}.

\section{CSR products}
\label{s:projector}

In this section, given a nonnegative matrix $A\in\Rpnn$, we consider
max-algebraic products of the form $CS^tR$, where $S$ is associated
with some subdigraph of the critical graph $\crit(A)$, and matrices
$C$ and $R$ are extracted from a certain Kleene star related to $A$.
We show that these products form a cyclic group and study their
periodic properties. We also show that $CS^tR$ are related to a
distinguished set of paths which we call $\crit$-heavy.

We start with a remark that the concept of cyclic classes discussed
in Section \ref{s:prel} can be generalized to {\em completely
reducible digraphs}, which consist of (possibly several) strongly
connected components, not connected with each other. Importantly,
the critical digraph of any $A\in\R_+^{n\times n}$ with
$\lambda(A)>0$ is completely reducible.

Let $A\in\Rpnn$ have $\lambda(A)=1$.  Consider any completely
reducible subdigraph $\crit=(N_c,E_c)$ of $\crit(A)$. In particular,
$\crit$ may consist of several disjoint cycles of $\crit(A)$, or we
can take a component of $\crit(A)$, or we may just have
$\crit=\crit(A)$. Denote by $\gamma$ the cyclicity of $\crit$ and
take $B:=(A^{\gamma})^*$. Define the matrices
$C=(c_{ij})\in\Rp^{n\times n}$, $R=(r_{ij})\in\Rp^{n\times n}$ and
$S=(s_{ij})\in\Rp^{n\times n}$ by
\begin{equation}
\label{csrdef}
\begin{split}
c_{ij}&=
\begin{cases}
b_{ij}, &\text{if $j\in N_c$}\\
0, &\text{otherwise,}
\end{cases}\quad
r_{ij}=
\begin{cases}
b_{ij}, &\text{if $i\in N_c$}\\
0, &\text{otherwise,}
\end{cases}\\
s_{ij}&=
\begin{cases}
a_{ij}, &\text{if $(i,j)\in E_c$}\\
0, &\text{otherwise.}
\end{cases}
\end{split}
\end{equation}
The nonzero entries of $C$, respectively $R$, can only be in the
submatrix of $B=(A^{\gamma})^*$ extracted from columns, respectively
rows, in $N_c$. All nonzero entries of $S$ are in the principal
submatrix $S_{N_cN_c}$ extracted from rows and columns in $N_c$.
See Figure~\ref{f:csr} for a schematic display.

\begin{figure}
\centering
\begin{tikzpicture}[scale=0.7]
\draw (0,0) node{$(A^{\gamma})^*$}; \draw (-2.25,2.25)
node{$\crit$}; \draw (-3,3) -- (3,3); \draw (-3,-3) -- (-3,3);
\draw (-3,-3) -- (3,-3); \draw (3,-3) -- (3,3);
\begin{scope}[dashed]
\draw (-2.25,2.25) circle (0.75cm);
\end{scope}
\begin{scope}[red]
\draw (3,3) -- (-3,3); \draw (3,2.5) -- (-3,2.5); \draw (3,2) --
(-3,2); \draw (3,1.5) -- (-3,1.5); \draw (-3.3,2.25) node{$R$};
\end{scope}
\begin{scope}[brown]
\draw (-3,3) -- (-3,-3); \draw (-2.5,3) -- (-2.5,-3); \draw (-2,3)
-- (-2,-3); \draw (-1.5,3) -- (-1.5,-3); \draw (-2.25,3.3)
node{$C$};
\end{scope}
\end{tikzpicture}
\caption{The scheme of $C$ and $R$ defined in~\eqref{csrdef}}
\label{f:csr}
\end{figure}
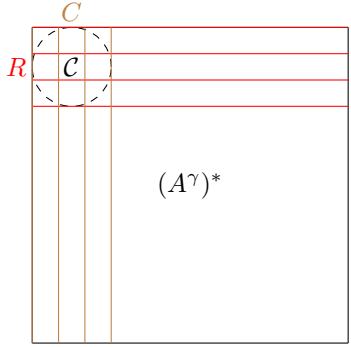

We show in the next proposition that $S$ can be assumed to be $0-1$.
It can be also deduced from the results in \cite{ES-73}.

\begin{proposition}
\label{p:s0-1} Let $A\in\Rpnn$ have $\lambda(A)=1$ and let $S$ be
defined as above. There exists a positive $z\in\Rp^n$ such that
$D^{-1}SD$ for $D:=\diag(z)$ is a $0-1$ matrix.
\end{proposition}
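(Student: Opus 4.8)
I would first recast the assertion as the existence of a positive $z\in\Rpn$ with $z_i^{-1}a_{ij}z_j=1$ for every critical edge $(i,j)\in E_c$ of $\crit$: for such a $z$ and $D=\diag(z)$, the matrix $D^{-1}SD$ carries a $1$ at each position where $S$ has a nonzero entry (recall $S$ from \eqref{csrdef}) and a $0$ elsewhere. Since $S$ has no nonzero entry in a row or column indexed outside $N_c$, the value of $z_i$ for $i\notin N_c$ is immaterial, so I would set $z_i=1$ there and concentrate on building $z$ on $N_c$. The plan is to obtain $z$ as a potential: on each strongly connected component $G=(N_G,E_G)$ of $\crit$ fix a node $r\in N_G$, put $z_r=1$, and for $j\in N_G$ set $z_j=w(P)^{-1}$ for any path $P$ from $r$ to $j$ lying in $G$. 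The crux on which this rests is the fact that \emph{every cycle of $\crit(A)$ has weight $1$}, i.e.\ is a critical cycle.

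Granting the crux, the construction goes through. It is well defined: if $P,P'$ are paths from $r$ to $j$ in $G$ and $Q$ is a path from $j$ back to $r$ in $G$, then $PQ$ and $P'Q$ are closed walks in $\crit(A)$, hence decompose into cycles of $\crit(A)$, each of weight $1$, so $w(P)w(Q)=1=w(P')w(Q)$; since critical edges carry positive weights, $w(Q)>0$, whence $w(P)=w(P')$ and all $z_j$ are positive. For an edge $(i,j)\in E_G$, taking a path to $j$ that extends a path to $i$ by the edge $(i,j)$ gives $z_j=z_i\,a_{ij}^{-1}$, hence $z_i^{-1}a_{ij}z_j=1$, as needed. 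Doing this on every component of $\crit$ and keeping $z_i=1$ for $i\notin N_c$ produces the desired $z$.

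To prove the crux I would invoke the visualisation observation recalled above \cite{FP-67,SSB}: since $\lambda(A)=1$, there is a positive $X=\diag(x)$ for which $\tilde A:=X^{-1}AX$ is visualised, so all critical entries of $\tilde A$ equal $1$. A diagonal similarity scaling changes neither $\crit(A)$ nor the weight of any cycle (the diagonal factors telescope around the cycle), so any cycle $C$ of $\crit(A)=\crit(\tilde A)$ uses only critical positions, at which $\tilde A$ equals $1$, giving $w_A(C)=w_{\tilde A}(C)=1$. In fact this already proves the proposition directly with $z:=x$: since $E_c\subseteq\Eca$, one gets $(X^{-1}SX)_{ij}=x_i^{-1}a_{ij}x_j=\tilde a_{ij}\in\{0,1\}$ for $(i,j)\in E_c$ and $(X^{-1}SX)_{ij}=0$ otherwise; the potential argument above is the more self-contained alternative, in the spirit of \cite{ES-73}. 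The single genuine obstacle is the crux about cycles of the critical graph; everything else is routine potential bookkeeping.
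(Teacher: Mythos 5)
Your proof is correct, but it reaches the scaling by a different route than the paper. The paper's proof is a short subeigenvector argument applied to $S$ itself: it takes $z:=\bigoplus_{j=1}^n S^*_{\cdot j}$, which is positive and satisfies $Sz\le z$ because $SS^*\le S^*$; then, since every edge of $\digr(S)=\crit$ lies on a cycle of weight $1$, multiplying the inequalities $z_i^{-1}s_{ij}z_j\le 1$ around such a cycle forces equality on every edge, and entries off $\digr(S)$ are zero anyway. You instead build $z$ either as an explicit path-weight potential rooted in each strongly connected component of $\crit$ (well defined precisely because of your ``crux'': every cycle of $\crit(A)$ has weight $1$), or, more directly, by taking $z:=x$ from the Fiedler--Pt\'{a}k visualization scaling recalled in Section 2 and restricting it to $S$. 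All three arguments hinge on the same underlying fact --- a cycle made of critical edges is itself critical --- which the paper uses implicitly through $\crit(S)=\digr(S)$ and which you establish via visualization. What each buys: the paper's construction is self-contained and purely algebraic, needing only $SS^*\le S^*$; your potential construction is the most elementary and makes the telescoping mechanism explicit (and is indeed in the spirit of the Engel--Schneider diagonal-products results the paper mentions); your shortcut $z:=x$ is the quickest but outsources the content to the cited visualization theorem, whose standard proof is essentially the paper's argument applied to $A$ instead of $S$ --- and note that, as written, your ``self-contained'' potential route also leans on visualization to prove the crux, so it is not actually independent of that citation unless you prove the crux by the subeigenvector argument directly.
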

\begin{proof}
As $\lambda(S)=1$, we can take $z:=\bigoplus_{j=1}^n S_{\cdot j}^*$.
This vector is positive, and observe that $Sz\leq z$ since $SS^*\leq
S^*$. From this and $\digr(S)=\crit(S)=\crit$, it can be deduced by
multiplying $z_i^{-1}s_{ij}z_j\leq 1$ along cycles that
$z_i^{-1}s_{ij}z_j=1$ for all $(i,j)\in \digr(S)$, while
$z_i^{-1}s_{ij}z_j=0$ for all $(i,j)\notin\digr(S)$.
\end{proof}

As $S$ can be scaled to $0-1$ matrix, we conclude that
$\{S^t,\;t\geq 0\}$ becomes periodic at most after the Wielandt
number $W(n_c)=(n_c-1)^2+1$, where $n_c$ is the number of nodes in
$N_c$. Matrix $A$ will be called {\em $S$-visualized}, if $S$
defined in \eqref{csrdef} is Boolean. In the $S$-visualized case,
the asymptotic form of $S^t$ for $t\geq T$ is determined by the
cyclic classes of $\crit$ \cite{BR}:
\begin{equation}
\label{e:sult}
s_{ij}^t=
\begin{cases}
1, &\text{if $[i]\to_t[j]$,}\\
0, &\text{otherwise.}
\end{cases}
\end{equation}

Now we study the {\em $CSR$ products}
\begin{equation}
\label{csr-def} \proj^{(t)}:=CS^tR,
\end{equation}
assuming that $A\in\Rpnn$ has $\lambda(A)=1$.

We start by the observation that if $T$ is the number after which
$\{S^t,\;t\geq 0\}$ becomes periodic, then
\begin{equation}
\label{e:sr=r} S^{l\gamma}R=R,\quad CS^{l\gamma}=C,\quad\forall
l\gamma\geq T.
\end{equation}
Indeed, \eqref{e:sult} implies that all diagonal entries of
$S^{l\gamma}$ with indices in $N_c$ are $1$ if $l\gamma\geq T$,
which implies $S^{l\gamma}R\geq R$ and $CS^{l\gamma}\geq C$. On the
other hand, $S\leq A$ and so $S^{l\gamma}\leq (A^{\gamma})^*$.
Further, $(A^{\gamma})^*R\leq R$ and $C(A^{\gamma})^*\leq C$ since
$(A^{\gamma})^*(A^{\gamma})^*=(A^{\gamma})^*$, and so
$S^{l\gamma}R\leq(A^{\gamma})^*R\leq R$ and $CS^{l\gamma}\leq
C(A^{\gamma})^*\leq C$.

As $\{S^t,\;t\geq T\}$ is periodic, so is $\{\proj^{(t)},\;t\geq
T\}$. Moreover, we conclude from \eqref{e:sr=r} that this
periodicity starts from the very beginning.

\begin{proposition}[Periodicity]
\label{p:period} $\proj^{(t+\gamma)}=\proj^{(t)}$ for all $t\geq 0$.
\end{proposition}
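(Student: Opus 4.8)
The plan is to deduce the statement directly from the absorption identity $CS^{l\gamma}=C$ recorded in \eqref{e:sr=r}, which holds for \emph{every} integer $l$ with $l\gamma\geq T$ (here $T$ is the threshold after which $\{S^t,\;t\geq 0\}$ is periodic). The one idea needed is that, unlike the eventual periodicity $S^{t+\gamma}=S^t$ which is only available for $t\geq T$, this identity lets us shift exponents of $S$ by multiples of $\gamma$ whenever those exponents are large, and $C$, $R$ sit at the two ends of $\proj^{(t)}$ to absorb such a shift no matter how small $t$ is. This is precisely why the periodicity will come out ``from the very beginning''.

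\emph{Step 1.} Fix an integer $l$ with $l\gamma\geq T$; then also $(l+1)\gamma\geq T$, so by \eqref{e:sr=r} both $CS^{l\gamma}=C$ and $CS^{(l+1)\gamma}=C$. \emph{Step 2.} For an arbitrary $t\geq 0$, replace the leading factor $C$ in $\proj^{(t+\gamma)}=CS^{t+\gamma}R$ by $CS^{l\gamma}$, use associativity of $\otimes$ and the semigroup law $S^aS^b=S^{a+b}$ to collect exponents, and then apply $CS^{(l+1)\gamma}=C$:
\begin{equation*}
\proj^{(t+\gamma)}=CS^{t+\gamma}R=CS^{l\gamma}\,S^{t+\gamma}R=CS^{(l+1)\gamma+t}R=CS^{(l+1)\gamma}\,S^tR=CS^tR=\proj^{(t)}.
\end{equation*}
This is exactly the claim. (Symmetrically, one may instead expand the trailing factor as $R=S^{l\gamma}R$ and use $S^{(l+1)\gamma}R=R$.)

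I do not foresee a genuine obstacle: everything rests on \eqref{e:sr=r}, which is already established, together with the trivial algebra of powers of a single matrix. The only point worth stating carefully in the write-up is \emph{why} the periodicity holds from $t=0$ rather than only from $t=T$ — namely that the shift by $\gamma$ is performed on the large exponent $l\gamma$ buried inside the product, so the size of $t$ plays no role at all.
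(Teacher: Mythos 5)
Your proof is correct and rests on the same key identity \eqref{e:sr=r} as the paper's own argument; the only cosmetic difference is that you absorb $S^{(l+1)\gamma}$ into $C$ directly in one chain of equalities, whereas the paper passes through $\proj^{(t+l\gamma)}=\proj^{(t)}$ and $\proj^{(t+(l+1)\gamma)}=\proj^{(t+\gamma)}$ and then equates the two via the eventual periodicity of $S^t$. Both routes are valid and essentially identical in substance.
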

\begin{proof}
It follows from Eqn. \eqref{e:sr=r} that
$\proj^{(t+l\gamma)}=\proj^{(t)}$ and
$\proj^{(t+(l+1)\gamma)}=\proj^{(t+\gamma)}$ for $l\gamma\geq T$.
But $\proj^{(t+l\gamma)}=\proj^{(t+(l+1)\gamma)}$, which implies
that also $\proj^{(t+\gamma)}=\proj^{(t)}$.
\end{proof}

It is also useful to understand the meaning of $\proj^{(t)}$ in
terms of paths. Given a set of paths $\Pi$, we denote by $w(\Pi)$
the greatest weight of paths in $\Pi$, assuming $w(\Pi)=0$ if $\Pi$
is empty. A path will be called {\em $\crit$-heavy} if it goes
through a node in $N_c$. The set of all $\Crit$-heavy paths on
$\digr(A)$ that connect $i$ to $j$ and have length $t$ will be
denoted by $\Pi_{ij,t}^{\hlabel}$. We also denote by $\tau$ the
maximal cyclicity of the components of $\crit(A)$.

\begin{theorem}[$\Crit$-Heavy Paths]
\label{pathology0} Let $A\in\Rpnn$ have $\lambda(A)=1$ and let
$T\geq 0$ be such that $\{S^t,t\geq T\}$ is periodic.
\begin{itemize}
\item[1.] For $t\geq 0$,
\begin{equation}
\label{pathsleq}
w(\Pi_{ij,t}^{\hlabel})\leq\proj^{(t)}_{ij}.%(C_{\mu}S_{\mu}^tR_{\mu})_{ij}.
\end{equation}
\item[2.] For $t\geq T+2\tau(n-1)$,
\begin{equation}
\label{pathsgeq}
w(\Pi_{ij,t}^{\hlabel})\geq\proj^{(t)}_{ij}.%(C_{\mu}S_{\mu}^tR_{\mu})_{ij}.
\end{equation}
\end{itemize}
\end{theorem}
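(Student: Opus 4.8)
The plan is to prove the two inequalities of Theorem~\ref{pathology0} by unwinding the definitions of $C$, $S$, $R$ in terms of path weights in $\digr(A)$, and then identifying the paths that occur in $CS^tR$ as precisely (resp.\ essentially) the $\Crit$-heavy paths of length $t$.

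First I would record the path interpretation of the three factors. Since $B=(A^\gamma)^*$, the entry $b_{ij}$ equals the greatest weight of a path in $\digr(A)$ from $i$ to $j$ whose length is a multiple of $\gamma$ (and $b_{ii}=1$ via the empty path). Hence $c_{ij}$ is the greatest weight of such a multiple-of-$\gamma$ path from $i$ to $j$ with $j\in N_c$, and $r_{ij}$ the greatest weight of such a path with $i\in N_c$. The matrix $S$ is the incidence data of $\crit$ (with critical edge weights, which are the original $a_{ij}$), so $s^k_{ij}$ is the greatest weight of a length-$k$ path lying entirely inside $\crit$ from $i$ to $j$; in particular $s^k_{ij}\neq 0$ exactly when such a path exists, i.e.\ when $[i]\to_k[j]$ in $\crit$ once $k$ is large. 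Expanding the product, $\proj^{(t)}_{ij}=\bigoplus_{k_1+k_2+k_3=t}\ c_{ik_1}^{\phantom{1}}\, (S^{k_2})_{\cdot\cdot}\, r_{\cdot j}$ — more precisely $\proj^{(t)}_{ij}=\bigoplus_{p,q\in N_c}\bigoplus_{k_1+k_2+k_3=t} b^{(\gamma)}_{ip}\, s^{k_2}_{pq}\, b^{(\gamma)}_{qj}$ where $b^{(\gamma)}_{ip}$ is the best multiple-of-$\gamma$ path weight and $k_1,k_3$ are constrained to be multiples of $\gamma$. Concatenating an $i\to p$ path (length $\equiv0\ (\modd\gamma)$), a $p\to q$ path inside $\crit$ (length $k_2$), and a $q\to j$ path (length $\equiv0\ (\modd\gamma)$) gives a genuine path from $i$ to $j$ of length $t$ that passes through $p\in N_c$, hence is $\Crit$-heavy; its weight is exactly the product above. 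This immediately gives part~1, the inequality $\proj^{(t)}_{ij}\geq$ (weight of that particular family of $\Crit$-heavy paths) — wait, the direction we need in part~1 is $w(\Pi^{\hlabel}_{ij,t})\leq\proj^{(t)}_{ij}$, so I must instead argue the reverse: every $\Crit$-heavy path $P$ of length $t$ is dominated in weight by $\proj^{(t)}_{ij}$. For this, take such a $P$ passing through some $v\in N_c$. I would show $P$ can be ``rerouted'' without decreasing weight so that it decomposes as above. Concretely, since $S\le A$ and $(A^\gamma)^*(A^\gamma)^*=(A^\gamma)^*$ one has $C(A^\gamma)^*\le C$, $(A^\gamma)^*R\le R$, $CS^{l\gamma}=C$, $S^{l\gamma}R=R$; using these I can compare $w(P)$ — which is a product of $a$-entries, hence $\le$ a corresponding product of $(A^\gamma)^*$-block entries on the initial and final segments and $S$-entries where the path stays critical — to an entry of $C S^{t'} R$ and then, using periodicity (Proposition~\ref{p:period}) and $S^{l\gamma}R=R$, to $\proj^{(t)}_{ij}$ itself. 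The honest way to phrase part~1 is: split $P$ at its first and last visits to $N_c$; the middle portion has weight $\le$ the corresponding entry of $(A^\gamma)^*$ restricted appropriately and, since any single edge or subpath is bounded by the Kleene star, one lands inside $C(A^\gamma)^*S^{?}(A^\gamma)^* R$, which collapses to $\proj^{(t)}_{ij}$ after using idempotency and the period-$\gamma$ identities. Part~1 needs no lower bound on $t$, consistent with the statement.

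For part~2, the point is the converse: for $t\geq T+2\tau(n-1)$ the value $\proj^{(t)}_{ij}$ is actually attained by some $\Crit$-heavy path of length exactly $t$. Here is where the threshold enters. Pick $p,q\in N_c$ and lengths $k_1,k_3$ multiples of $\gamma$ and a critical path of length $k_2$ realizing $\proj^{(t)}_{ij}=b^{(\gamma)}_{ip}s^{k_2}_{pq}b^{(\gamma)}_{qj}$. The only difficulty is that the optimal multiple-of-$\gamma$ path from $i$ to $p$ realizing $b^{(\gamma)}_{ip}$ has \emph{some} length $\le$ something like $n$ (it can be chosen acyclic up to critical loops), not a prescribed one; likewise for $q\to j$; and $k_2$ must be whatever is forced by $t$. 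I would fix this by absorbing slack into cycles of $\crit$: since $p,q\in N_c$ lie on critical cycles of weight $1$, one can pad a critical path between cyclic classes $[p]$ and $[q]$ to any sufficiently large target length $\equiv (t-k_1-k_3)\ (\modd\gamma)$ without changing its weight (all critical cycles have geometric mean $1$). The budget needed for the padding plus reaching the stable regime of $S^t$ (namely $t\ge T$) plus the at-most-$\tau(n-1)$ slack on each of the two end segments (their optimal lengths can be taken $<n$ but we need them $\equiv 0\ (\modd\gamma)$, costing up to $\tau(n-1)$ each, where $\tau$ is the max component cyclicity) is exactly $T+2\tau(n-1)$. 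So the steps are: (i) use Proposition~\ref{p:s0-1}/\eqref{e:sult} to get the stable form of $S^t$; (ii) choose near-optimal end segments of length $<n$, then extend each to length a multiple of $\gamma$ at cost $\le\tau(n-1)$ using a critical cycle at the endpoint, preserving weight $1$-factors, i.e.\ preserving optimality of $b^{(\gamma)}$; (iii) route the middle inside $\crit$ between the appropriate cyclic classes with the residual length, which is $\ge T$ hence realizable with weight $1$ by \eqref{e:sult}; (iv) concatenate to produce a $\Crit$-heavy path of length $t$ and weight $=\proj^{(t)}_{ij}$.

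The main obstacle I anticipate is the bookkeeping in step~(ii)–(iii) of part~2: one must simultaneously control (a) the \emph{lengths} modulo $\gamma$ of the two end segments so the critical middle segment has a length in the correct residue class for $[p]\to_{k_2}[q]$ to hold, (b) the total length hitting exactly $t$, and (c) that all padding is by critical (weight-$1$) cycles so no weight is lost — and then verify that the worst case of these competing constraints is covered by the stated bound $T+2\tau(n-1)$ rather than something larger. Getting the constant $2\tau(n-1)$ (as opposed to, say, $2\tau n$ or a $\gamma$-dependent term) will require noting that a weight-optimal $i\to p$ path can be chosen to have no repeated non-critical nodes, hence length $\le n-1$, and that extending to a multiple of $\gamma$ needs at most $\gamma_{[p]}-1\le\tau-1<\tau$ extra turns around a critical cycle of length $\gamma_{[p]}\le\tau$, giving the additive $\tau(n-1)$ only after combining length and padding carefully; I would double-check whether the clean bound actually is $\tau(n-1)$ or whether a slightly looser argument forces $2\tau(n-1)$ exactly as stated, and present whichever the concatenation genuinely yields. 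Everything else — part~1 and the path/Kleene-star dictionary — is routine once the definitions are unwound.
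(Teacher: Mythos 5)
Your overall strategy (path dictionary for $C$, $S$, $R$, then decompose/concatenate, pad with weight-one critical cycles, shorten end segments) is the paper's strategy, but two steps as you describe them do not go through.

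For part~1, the step ``any single edge or subpath is bounded by the Kleene star'' is false for the relevant Kleene star: $c_{iv}=(A^{\gamma})^*_{iv}$ only dominates weights of $i\to v$ paths whose \emph{length is a multiple of} $\gamma$, and the prefix and suffix obtained by splitting a $\crit$-heavy path at a visit to $N_c$ have arbitrary lengths. Likewise, the portion of $P$ between its first and last visits to $N_c$ need not lie inside $\crit$, so no power of $S$ bounds it, and even if it did, its length need not put you in a position to invoke the periodicity $\proj^{(t')}=\proj^{(t)}$. The device that repairs this --- and which you use only in part~2 --- is to insert at the visited critical node $m$ a long block of critical cycles (weight $1$, total length a multiple of $\gamma$) and then \emph{re-cut} the padded path into $P'_{\operatorname{beg}}\circ P'_{\operatorname{int}}\circ P'_{\operatorname{end}}$ with $l(P'_{\operatorname{beg}})\equiv l(P'_{\operatorname{end}})\equiv 0\ (\modd\ \gamma)$ and with $P'_{\operatorname{int}}$ entirely critical of length \emph{exactly} $t$; only then do the three factors sit under $c_{im_1}$, $s^t_{m_1m_2}$, $r_{m_2j}$. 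Without this re-cutting, part~1 is not proved.

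For part~2, your plan to keep the two shortened end segments of length $\equiv 0\ (\modd\ \gamma)$ cannot yield the stated threshold $T+2\tau(n-1)$: a nonzero multiple of $\gamma$ can far exceed $\tau(n-1)$, since $\gamma$ is the l.c.m.\ of the component cyclicities while $\tau$ is only their maximum (your own hedging about ``whichever the concatenation genuinely yields'' would then produce a bound involving $\gamma(n-1)$, i.e.\ a weaker theorem). The paper's resolution is to observe that the optimal middle segment lies in a single component $\cT$ of $\crit$ of cyclicity $\pi$ dividing $\gamma$, to view the end segments as paths in $\digr(A^{\pi})$ and delete cycles there (legitimate since $\lambda=1$ makes deletion weight-nondecreasing), obtaining lengths $k_1\pi,k_2\pi$ with $k_1,k_2<n$, hence at most $\tau(n-1)$ each; the lost length $(k_1+k_2)\pi$ is then reabsorbed in the middle segment using the fact that the $\cT$-block of $S^t$ is periodic with period $\pi$ (not merely $\gamma$) once $t\geq T$. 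This single-component, modulo-$\pi$ refinement is the missing idea needed to land on $T+2\tau(n-1)$.
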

\begin{proof} By Proposition~\ref{p:s0-1} there exists a diagonal matrix
$D$ such that $D^{-1}AD$ is $S$-visualized. As both sides of
\eqref{pathsleq} and \eqref{pathsgeq} are stable under similarity
scaling of $A$, we will assume that $A$ is already $S$-visualized.

1.: Let $P\in\Pi_{ij,t}^{\hlabel}$ and $w(P)\neq 0$. We need to show
that
\begin{equation}
\label{wpleq}
w(P)\leq \proj^{(t)}_{ij}% (C_{\mu}S_{\mu}^tR_{\mu})_{ij}.
\end{equation}
Path $P$ can be decomposed as $P=P_{\begp}\circ P_{\endp}$, where
$P_{\begp}$ connects $i$ to a node $m\in N_c$ and $P_{\endp}$
connects $m$ to $j$. Adjoining to $P$ any sufficiently large number
of cycles of $\crit$ that go through $m$ and whose total length is a
multiple of $\gamma$, we obtain a path $P'$ that we can decompose as
$P'=P'_{\begp}\circ P'_{\intp}\circ P'_{\endp}$, where $P'_{\begp}$
connects $i$ to $m_1\in\crit$ and $l(P'_{\begp})$ is a multiple of
$\gamma$, $P'_{\intp}$ connects $m_1$ to $m_2\in N_c$, has length
$t$ and belongs entirely to $\crit$, and $P'_{\endp}$ connects $m_2$
to $j$ and $l(P'_{\endp})$ is a multiple of $\gamma$. We conclude
that $w(P'_{\begp})\leq c_{im_1}$, $w(P'_{\endp})\leq r_{m_2j}$ and
$w(P'_{\intp})=s_{m_1m_2}^t=1$. We obtain $w(P)=w(P')\leq
c_{im_1}s_{m_1m_2}^t r_{m_2j}$, which implies \eqref{wpleq}, and
hence \eqref{pathsleq}.

2.: %Since $CS^tR$ are periodic after $T_{\mu}$, it suffices to prove the claim for $s=t$
There exist indices $m_1$ and $m_2$ such that
$\proj_{ij}^{(t)}=c_{im_1}s_{m_1m_2}^tr_{m_2j}$. This is the
weight of a path $P$ decomposed as $P=P_{\begp}\circ P_{\intp}\circ
P_{\endp}$, where $P_{\begp}$ connects $i$ to $m_1$, $P_{\intp}$
connects $m_1$ to $m_2$ and $P_{\endp}$ connects $m_2$ to $j$. Here
$P_{\intp}$ has length $t$ and belongs to the component of $\crit$
which we denote by $\cT$ and whose cyclicity we denote by $\pi$. The
lengths of $P_{\begp}$ and $P_{\endp}$ are respectively
$l(P_{\begp})=l_1\pi$ and $l(P_{\endp})=l_2\pi$ for some $l_1,l_2$,
since $l(P_{\begp})$ and $l(P_{\endp})$ are multiples of $\gamma$
and  $\gamma$ is itself a multiple of $\pi$. Paths $P_{\begp}$ and
$P_{\endp}$ correspond to certain paths on $\digr(A^{\pi})$ with
lengths $l_1$ and $l_2$. If $l_1\geq n$ or $l_2\geq
n$, then we can perform cycle deletion (w.r.t.
$\digr(A^{\pi})$) and obtain paths $P^2_{\begp}$ and $P^2_{\endp}$
with lengths $k_1\pi$ and $k_2\pi$ where $k_1<n$ and $k_2<n$. For
the resulting path $P^2:=P^2_{\begp}\circ P_{\intp}\circ
P^2_{\endp}$ we will have $w(P^2)\geq w(P)$ since $\lambda=1$. Now
we have $l(P^2_{\begp})+l(P^2_{\endp})\leq 2\tau(n-1)$. If $t\geq
T+2\tau(n-1)$, then the principal submatrix of $S^t$ corresponding
to the component $\cT$ coincides with that of $S^{t-(k_1+k_2)\pi}$,
which implies that $P_{\intp}$ can be replaced by a path
$P_{\intp}^3$ with length $t-(k_1+k_2)\pi$, so that
$w(P^3)=w(P^2)\geq \proj_{ij}^{(t)}$ where $P^3:=P_{\begp}^2\circ
P_{\intp}^3\circ P_{\endp}^2\in\Pi_{ij,t}^{\hlabel}$.
\end{proof}

This ``path sense'' of $\proj^{(t)}$ simplifies the proof of the
following important law.

\begin{theorem}[Group law]
\label{t:group} $\proj^{(t_1+t_2)}=\proj^{(t_1)}\proj^{(t_2)}$ for
all $t_1,t_2\geq 0$
\end{theorem}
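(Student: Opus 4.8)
\emph{Strategy.} By Proposition~\ref{p:period} the sequence $\{\proj^{(t)}\}$ has period $\gamma$, so it is enough to prove the identity for all $t_1,t_2$ large enough, say $t_1,t_2\ge T+2\tau(n-1)$: for arbitrary $t_1,t_2\ge 0$ we may pick $a,b\ge 0$ with $t_1+a\gamma,\ t_2+b\gamma\ge T+2\tau(n-1)$ and then
\begin{equation*}
\proj^{(t_1)}\proj^{(t_2)}=\proj^{(t_1+a\gamma)}\proj^{(t_2+b\gamma)}=\proj^{(t_1+t_2+(a+b)\gamma)}=\proj^{(t_1+t_2)}.
\end{equation*}
For such large $t_1,t_2$ I will prove the two inequalities $\proj^{(t_1)}\proj^{(t_2)}\ge\proj^{(t_1+t_2)}$ and $\proj^{(t_1)}\proj^{(t_2)}\le\proj^{(t_1+t_2)}$ separately. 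The first is a short algebraic computation; the second is where the ``path sense'' of Theorem~\ref{pathology0} makes things easy.

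\emph{Lower bound} (which in fact holds for all $t_1,t_2\ge 1$). By associativity of $\otimes$, $\proj^{(t_1)}\proj^{(t_2)}=CS^{t_1}(RC)S^{t_2}R$. By \eqref{csrdef}, $(RC)_{ij}=\bigoplus_k r_{ik}c_{kj}$ is $0$ unless $i,j\in N_c$, in which case it equals $\bigoplus_k b_{ik}b_{kj}=(B\otimes B)_{ij}=b_{ij}$ because $B=(A^\gamma)^*$ is idempotent; since $B\ge I$ this means $(RC)_{ij}\ge\delta_{ij}$ for all $i,j\in N_c$. On the other hand $S$, and therefore each power $S^t$ with $t\ge 1$, is supported on the block $N_c\times N_c$, while $C$ (resp.\ $R$) has zero columns (resp.\ rows) with index outside $N_c$. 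Hence $CS^{t_1}(RC)S^{t_2}R$ depends on $RC$ only through its $N_c\times N_c$ block, and replacing that block by the identity can only decrease the product (max-algebraic matrix multiplication is isotone); therefore $CS^{t_1}(RC)S^{t_2}R\ge CS^{t_1}S^{t_2}R=CS^{t_1+t_2}R=\proj^{(t_1+t_2)}$. Note that the tempting path argument for this inequality — take an optimal $\crit$-heavy walk of length $t_1+t_2$ from $i$ to $j$ and cut it at position $t_1$ — does not quite work, since only one of the two halves is guaranteed to meet $N_c$; the algebraic argument above sidesteps this, and this is the main point to watch. (One could instead invoke the more refined description of the optimal walk in the proof of Theorem~\ref{pathology0}(2), whose central portion stays inside $\crit$, but the algebraic route is cleaner.)

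\emph{Upper bound} ($t_1,t_2\ge T+2\tau(n-1)$). Here Theorem~\ref{pathology0} gives $\proj^{(t)}_{ij}=w(\Pi_{ij,t}^{\hlabel})$ for every $t\ge T+2\tau(n-1)$, in particular for $t=t_1$, $t=t_2$ and $t=t_1+t_2$; hence
\begin{equation*}
(\proj^{(t_1)}\proj^{(t_2)})_{ij}=\bigoplus_k \proj^{(t_1)}_{ik}\proj^{(t_2)}_{kj}=\bigoplus_k w(\Pi_{ik,t_1}^{\hlabel})\,w(\Pi_{kj,t_2}^{\hlabel}).
\end{equation*}
Fix an index $k$ for which this term is nonzero; then $w(\Pi_{ik,t_1}^{\hlabel})$ and $w(\Pi_{kj,t_2}^{\hlabel})$ are both positive, so there are walks $Q_1\in\Pi_{ik,t_1}^{\hlabel}$ and $Q_2\in\Pi_{kj,t_2}^{\hlabel}$ attaining them. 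Their concatenation $Q_1\circ Q_2$ is a walk of length $t_1+t_2$ from $i$ to $j$ passing through a node of $N_c$ (already $Q_1$ does), so $Q_1\circ Q_2\in\Pi_{ij,t_1+t_2}^{\hlabel}$ and consequently $w(Q_1)w(Q_2)=w(Q_1\circ Q_2)\le w(\Pi_{ij,t_1+t_2}^{\hlabel})=\proj^{(t_1+t_2)}_{ij}$. Taking the maximum over $k$ gives $\proj^{(t_1)}\proj^{(t_2)}\le\proj^{(t_1+t_2)}$. Combined with the lower bound this establishes the identity for large $t_1,t_2$, and then the reduction in the first paragraph extends it to all $t_1,t_2\ge 0$.
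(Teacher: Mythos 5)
Your proof is correct and follows essentially the same route as the paper's: the lower bound comes from the idempotency of $(A^{\gamma})^*$ forcing $RC$ to dominate the identity on the $N_c\times N_c$ block, and the upper bound comes from concatenating optimal $\crit$-heavy walks via Theorem~\ref{pathology0} for large $t_1,t_2$, then extending by periodicity. Your write-up merely makes the support bookkeeping in the lower bound and the reduction to large exponents more explicit than the paper does.
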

\begin{proof}
As $(A^{\gamma})^*(A^{\gamma})^*=(A^{\gamma})^*$, we have
$(RC)_{ii}=(A^{\gamma})^*_{ii}=1$ for all $i\in N_c$, and hence
$RCS\geq S$. We use this to obtain that
$\proj^{(t_1)}\proj^{(t_2)}\geq\proj^{(t_1+t_2)}$:
\begin{equation}
\label{ineq1111} \proj^{(t_1)}\proj^{(t_2)}=CS^{t_1}RCS^{t_2}R\geq
CS^{t_1+t_2}R=\proj^{(t_1+t_2)}.
\end{equation}
But Theorem \ref{pathology0} implies that
$\proj^{(t_1)}\proj^{(t_2)}\leq\proj^{(t_1+t_2)}$ for all large
enough $t_1,t_2$, since the concatenation of two $\crit$-heavy paths
is again a $\crit$-heavy path. As $\proj^{(t)}$ are periodic, it
follows that $\proj^{(t_1)}\proj^{(t_2)}\leq\proj^{(t_1+t_2)}$ for
all $t_1,t_2$, and we obtain the claim combining this with the
reverse inequality.
\end{proof}

Formulas \eqref{e:sult} and \eqref{e:sr=r} imply that if $A$ is
$S$-visualized then all rows of $R$ or columns of $C$ with indices
in the same cyclic class of $\crit$ coincide.
%(Obviously, the same
%holds for the corresponding rows and columns of $(A^{\gamma})^*$.)
Hence, when working with $\proj^{(t)}$%in the $S$-visualized case,
we can assume without loss of generality that all cyclic classes
have just $1$ element and consequently, that $S_{N_cN_c}$ is a
permutation matrix. This captures the structure of $\proj^{(t)}$,
which form a cyclic group of order $\gamma$.

As usual $e_i$ denotes the vector which has all coordinates equal to
$0$ except for the $i$th which equals $1$. For the rows
$\proj_{i\cdot}^{(t)}$ and columns $\proj_{\cdot j}^{(t)}$ of
$\proj^{(t)}$ we have:
\begin{equation}
e_i^T\proj^{(t)}=\proj^{(t)}_{i\cdot},\
\proj^{(t)}e_j=\proj^{(t)}_{\cdot j}.
\end{equation}

Next we study the periodicity of $\proj^{(t)}$ in more detail. It
turns out that the columns and rows of $\proj^{(t+1)}$ with indices
in $N_c$ can be obtained from those of $\proj^{(t)}$ by means of a
permutation on cyclic classes, while the rest of the columns (or
rows) are max-linear combinations of the critical ones. We start
with the following observation on the spectral projector
$\proj^{(0)}:=CR$, which can be found in \cite{BCOQ,CGG-99}.

\begin{lemma}
\label{l:sp} $\proj_{i\cdot}^{(0)}=R_{i\cdot}$ and $\proj_{\cdot
i}^{(0)}=C_{\cdot i}$ for all $i\in N_c$.
\end{lemma}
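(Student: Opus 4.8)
The plan is to unwind the definitions \eqref{csrdef} and use the idempotency $(A^\gamma)^* = ((A^\gamma)^*)^2$, i.e.\ $B^2 = B$ where $B = (A^\gamma)^*$. Recall that $\proj^{(0)} = CR$, so $\proj^{(0)}_{i\cdot} = \bigoplus_k c_{ik} R_{k\cdot}$, where the max is taken over $k \in N_c$ (since $c_{ik} = 0$ for $k \notin N_c$, and for such $k$ we have $c_{ik} = b_{ik}$). Similarly $r_{k\ell} = b_{k\ell}$ for $k \in N_c$ (all rows of $R$ with index in $N_c$ come from $B$). So $\proj^{(0)}_{i\ell} = \bigoplus_{k \in N_c} b_{ik} b_{k\ell}$, and I want to show this equals $r_{i\ell}$ when $i \in N_c$, which in turn equals $b_{i\ell}$.

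First I would establish the upper bound $\proj^{(0)}_{i\ell} \leq b_{i\ell}$: for each $k \in N_c$ we have $b_{ik} b_{k\ell} \leq (B^2)_{i\ell} = b_{i\ell}$, so taking the max over $k \in N_c$ preserves the inequality. Second, for the lower bound when $i \in N_c$, I would take $k = i$ in the max defining $\proj^{(0)}_{i\ell}$: this contributes $b_{ii} b_{i\ell}$, and since $B = (A^\gamma)^* \geq I$ we have $b_{ii} \geq 1$ (in fact $b_{ii} = 1$ because $\lambda(A) = 1$ forces $\lambda(A^\gamma) = 1$, so the Kleene star has unit diagonal). Hence $\proj^{(0)}_{i\ell} \geq b_{ii} b_{i\ell} = b_{i\ell}$. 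Combining, $\proj^{(0)}_{i\ell} = b_{i\ell} = r_{i\ell}$ for all $i \in N_c$ and all $\ell$, i.e.\ $\proj^{(0)}_{i\cdot} = R_{i\cdot}$. The statement $\proj^{(0)}_{\cdot i} = C_{\cdot i}$ for $i \in N_c$ is entirely symmetric: $\proj^{(0)}_{\ell i} = \bigoplus_{k \in N_c} c_{\ell k} r_{ki}$, the upper bound $\leq b_{\ell i}$ comes again from $B^2 = B$, and the lower bound from taking $k = i$ together with $r_{ii} = b_{ii} = 1$, giving $\proj^{(0)}_{\ell i} = b_{\ell i} = c_{\ell i}$.

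This argument is essentially routine bookkeeping; there is no real obstacle. The only point that needs a moment's care is the observation that $b_{ii} = 1$ for $i \in N_c$, which relies on $\lambda(A^\gamma) = \lambda(A)^\gamma = 1$ so that $(A^\gamma)^*$ is a genuine (finite) Kleene star with $(A^\gamma)^* \geq I$; one should also note that $i \in N_c$ is not even needed for this, since every diagonal entry of a Kleene star equals $1$, but restricting to $i \in N_c$ is what makes $c_{ii}$ and $r_{ii}$ land in the support of $C$ and $R$. A secondary bit of care is to make sure the max in $CR$ really ranges only over $k \in N_c$: this is immediate from $c_{ik} = 0$ for $k \notin N_c$.
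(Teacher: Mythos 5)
Your argument is correct and is essentially the paper's own proof written entry-wise: the lower bound comes from $b_{ii}=1$ for $i\in N_c$ (the paper phrases this as $C_{i\cdot}\geq e_i^T$ and $R_{\cdot i}\geq e_i$), and the upper bound from $CR\leq B^2=B$ together with $R_{i\cdot}=B_{i\cdot}$ for $i\in N_c$. No substantive difference.
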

\begin{proof}
As $(A^{\gamma})^*_{ii}=1$ for all $i\in N_c$, we obtain $e_i^T
C=C_{i\cdot}\geq e_i^T$ and $Re_i=R_{\cdot i}\geq e_i$. We see that
\begin{equation}
\begin{split}
\proj_{i\cdot}^{(0)}&=e_i^TCR\geq e_i^TR=R_{i\cdot},\\
\proj_{\cdot i}^{(0)}&=CRe_i\geq Ce_i=C_{\cdot i}.
\end{split}
\end{equation}
But $CR\leq ((A^{\gamma})^*)^2=(A^{\gamma})^*$, which implies the
reverse inequalities $\proj_{i\cdot}^{(0)}\leq R_{i\cdot}$ and
$\proj_{\cdot i}^{(0)}\leq C_{\cdot i}$.
\end{proof}

\begin{theorem}
\label{t:rotate} Let $A\in\Rpnn$ have $\lambda(A)=1$ and be
$S$-visualized. If $[i]\to_t [j],$ then
\begin{equation}
\label{e:rotate} \proj_{i\cdot}^{(t+s)}=\proj_{j\cdot}^{(s)},\
\proj_{\cdot i}^{(s)}=\proj_{\cdot j}^{(t+s)}
\end{equation}
for all $s,t\geq 0$.
\end{theorem}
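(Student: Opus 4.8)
The plan is to reduce the statement to Lemma~\ref{l:sp} by using the Group law (Theorem~\ref{t:group}) together with the $S$-visualized description of $S^t$ in \eqref{e:sult}. First I would observe that since $A$ is $S$-visualized and $[i]\to_t[j]$, Eqn.~\eqref{e:sult} gives $s^t_{kj}=1$ whenever $k\in[i]$, and more usefully the row vector $e_i^T S^t$ singles out exactly the cyclic class $[j]$: in a setting where every cyclic class is a singleton (which, as the paragraph after Theorem~\ref{t:group} notes, we may assume w.l.o.g.) one has $e_i^T S^t = e_j^T$. The same reasoning on columns gives $S^t e_j = e_i$, using $[i]\to_t[j]$ read backwards along paths of length $t$.

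The second step is to feed these identities into the group law. Write $\proj^{(t+s)} = \proj^{(t)}\proj^{(s)} = C S^t R C S^s R$. The delicate point is that $R C$ is \emph{not} the identity, only $RCS\geq S$ and $(RC)_{ii}=1$ for $i\in N_c$; so I cannot naively cancel. Instead I would proceed through the $R$-factor directly: by Lemma~\ref{l:sp}, $\proj^{(s)}_{j\cdot} = e_j^T C S^s R$ equals $R_{j\cdot}$ only at $s=0$, so for general $s$ I instead use $\proj^{(t+s)}_{i\cdot} = e_i^T C S^{t+s} R$. Now rewrite $e_i^T C S^{t+s} R$. Using $C S^{l\gamma}=C$ from \eqref{e:sr=r} and the group/periodicity structure, the cleanest route is: $e_i^T \proj^{(t+s)} = e_i^T C S^t (S^s R)$, and since $C$ has its nonzero columns in $N_c$ while $S^t$ acts as a permutation of the singleton cyclic classes inside $N_c$, the composite row functional $e_i^T C S^t$ applied on the $N_c$-block behaves like $e_j^T$ composed with $C$ on that block — that is, $e_i^T C S^t = e_j^T C'$ for the relevant restriction, giving $e_i^T C S^t R = e_j^T C S^s R = \proj^{(s)}_{j\cdot}$ once we reinsert $S^s$. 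The column identity $\proj^{(s)}_{\cdot i} = \proj^{(t+s)}_{\cdot j}$ is proved symmetrically, using $S^t e_j = e_i$ and the fact that $R$ has its nonzero rows in $N_c$.

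Concretely, I expect the argument to run: $\proj^{(s)}_{\cdot i} = C S^s R e_i$; since $i\in N_c$ and $S^t$ permutes the (singleton) classes, $R e_i = R S^t e_j$ — because $S^t$ maps the basis vector $e_j$ to $e_i$ on $N_c$ and $R$ annihilates coordinates outside $N_c$ — hence $\proj^{(s)}_{\cdot i} = C S^s R S^t e_j = C S^{s} (S^t R) e_j$; and using \eqref{e:sr=r} together with periodicity $S^{t+s}R = S^t(S^sR)$ we can swap the order of $S^t$ and $S^s$ (they are powers of the same matrix) to get $C S^{t+s} R e_j = \proj^{(t+s)}_{\cdot j}$. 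The row case is the transpose of this computation.

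The main obstacle is bookkeeping the reduction "w.l.o.g.\ all cyclic classes are singletons" rigorously: one must check that passing to the quotient by cyclic classes is compatible with the statement \eqref{e:rotate}, i.e.\ that rows/columns of $\proj^{(t)}$ indexed within one cyclic class already coincide (this is exactly the remark after Theorem~\ref{t:group}), so that the permutation picture of $S^t$ on $N_c$ is literally correct rather than merely correct "up to identifying equal rows." Once that reduction is in place, $S_{N_cN_c}$ is a genuine permutation matrix, $S^t$ realizes the bijection induced by $[\,\cdot\,]\to_t[\,\cdot\,]$, and the identities $e_i^TCS^t = e_j^TC$ on the $N_c$-block and $S^tRe_j = Re_i$ become elementary; the rest is just substitution into the group law.
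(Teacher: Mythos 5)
Your setup (singleton cyclic classes, $S_{N_cN_c}$ a permutation matrix, $e_i^TS^t=e_j^T$ and $S^te_j=e_i$, Lemma~\ref{l:sp} and the group law) matches the paper's, but the decisive step is not justified and, as written, rests on a false commutation. In the column computation you pass from $CS^sRS^te_j$ to $CS^s(S^tR)e_j$; the matrices being interchanged here are $R$ and $S^t$, not $S^t$ and $S^s$, and $RS^t\neq S^tR$ in general ($R$ is built from rows of $(A^{\gamma})^*$, $S$ from critical edges). Concretely, $RS^te_j=R_{\cdot i}$ while $S^tRe_j=(S^tR)_{\cdot j}$, and entrywise this asks for $r_{ki}=r_{l j}$ whenever $[k]\to_t[l]$ and $[i]\to_t[j]$ --- a statement about maximal weights of \emph{non-critical} paths of length $\equiv 0\ (\modd\;\gamma)$ between different pairs of nodes. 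The same applies to the row identity $e_i^TCS^t=e_j^TC$, which you call elementary: right-multiplying it by $R$ already yields the $s=0$ case of the theorem, so asserting it without proof begs the question. (These identities are in fact true, but proving them requires splicing critical paths of lengths $\equiv -t$ and $\equiv t$ modulo the cyclicities of two possibly different components of $\crit$ onto a non-critical path --- precisely the kind of argument your proposal does not supply.)

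What your setup does give cheaply is one inequality: since $(A^{\gamma})^*\geq I$ on $N_c$, we have $e_i^TC\geq e_i^T$, hence $\proj^{(t)}_{i\cdot}=e_i^TCS^tR\geq e_i^TS^tR=e_j^TR=\proj^{(0)}_{j\cdot}$ by Lemma~\ref{l:sp}. This is exactly how the paper starts. The missing idea is the reverse inequality: the paper obtains it by applying the same easy inequality to the relation $[j]\to_{\gamma-t}[i]$, giving $\proj^{(0)}_{i\cdot}\leq\proj^{(\gamma-t)}_{j\cdot}$, and then multiplying on the right by $\proj^{(t)}$ and invoking the group law and periodicity to get $\proj^{(t)}_{i\cdot}\leq\proj^{(\gamma)}_{j\cdot}=\proj^{(0)}_{j\cdot}$. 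Without this (or an explicit path argument for your commutation identities), the proof is incomplete.
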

\begin{proof}
We prove the first equality of \eqref{e:rotate}. Using the group law
we assume that $s=0$.  We also assume that $S_{N_cN_c}$ is a
permutation matrix, then $e_i^TS^t=e_j$. Using this and
Lemma~\ref{l:sp} we obtain
\begin{equation}
\label{e:ineq11}
\begin{split}
\proj_{j\cdot}^{(0)}=e_j^T\proj^{(0)}&=e_j^TR=e_i^TS^tR\leq\\
&\leq e_i^TCS^tR=e_i^T\proj^{(t)}=\proj^t_{i\cdot}.
\end{split}
\end{equation}
Analogously we have $\proj_{i\cdot}^{(0)}\leq
\proj_{j\cdot}^{(\gamma-t)}$. Multiplying this inequality by
$\proj^{(t)}$ and using the group law and periodicity, we obtain
that
$\proj_{i\cdot}^{(t)}\leq\proj_{j\cdot}^{(\gamma)}=\proj_{j\cdot}^{(0)}$.
Combining this with \eqref{e:ineq11} we obtain the desired property.
\end{proof}

\begin{corollary}
\label{c:per} Let $A\in\Rpnn$ have $\lambda(A)=1$. Then
$\proj_{i\cdot}^{(t)}=(S^tR)_{i\cdot}$ and $\proj_{\cdot i}^{(t)}=
(CS^t)_{\cdot i}$ for all $i\in N_c$.
\end{corollary}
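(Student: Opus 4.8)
The plan is to reduce to the $S$-visualized case and then combine Theorem~\ref{t:rotate}, Lemma~\ref{l:sp}, and the observation (recorded just after Theorem~\ref{t:group}) that rows of $R$, respectively columns of $C$, whose indices lie in one cyclic class of $\crit$ coincide. First I would check the reduction: under a scaling $A\mapsto D^{-1}AD$ with $D=\diag(z)$ the matrices $C$, $R$, $S$ transform by conjugation with $D$, hence so do $\proj^{(t)}=CS^tR$, $S^tR$ and $CS^t$, while $N_c$, $\crit$ and its cyclic classes are unchanged; consequently both asserted identities are invariant under scaling, and by Proposition~\ref{p:s0-1} we may assume $A$ is $S$-visualized. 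Now fix $i\in N_c$ and $t\ge 0$, and let $[j]$ be the cyclic class of the critical component of $i$ with $[i]\to_t[j]$ (such a class exists since that component is strongly connected, and $j\in N_c$).

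For the row identity I would compute both sides and see they equal $R_{j\cdot}$. On the left, Theorem~\ref{t:rotate} with $s=0$ gives $\proj^{(t)}_{i\cdot}=\proj^{(0)}_{j\cdot}$, and Lemma~\ref{l:sp} gives $\proj^{(0)}_{j\cdot}=R_{j\cdot}$. On the right, choose $l$ with $l\gamma\ge T$; by \eqref{e:sr=r} we have $S^{l\gamma}R=R$, hence $S^tR=S^{t+l\gamma}R$ and $(S^tR)_{i\cdot}=\bigoplus_k (S^{t+l\gamma})_{ik}R_{k\cdot}$. Since $t+l\gamma\ge T$, formula \eqref{e:sult} tells us the indices $k$ appearing here are exactly those with $[i]\to_{t+l\gamma}[k]$; because $\gamma$ is a multiple of every critical cyclicity this coincides with $[i]\to_t[k]$, i.e.\ $[k]=[j]$, and this index set is nonempty. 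By the row-coincidence remark, $R_{k\cdot}=R_{j\cdot}$ for all such $k$, so $(S^tR)_{i\cdot}=R_{j\cdot}=\proj^{(t)}_{i\cdot}$.

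The column identity is proved symmetrically: let $[j']$ be the cyclic class with $[j']\to_t[i]$; the second equality of Theorem~\ref{t:rotate} (with $s=0$) and Lemma~\ref{l:sp} give $\proj^{(t)}_{\cdot i}=\proj^{(0)}_{\cdot j'}=C_{\cdot j'}$, while $CS^{l\gamma}=C$ from \eqref{e:sr=r}, formula \eqref{e:sult}, and the coincidence of the columns $C_{\cdot k}$ for $k\in[j']$ give $(CS^t)_{\cdot i}=C_{\cdot j'}$. The whole argument is essentially bookkeeping once Theorem~\ref{t:rotate} is in hand; the only points that need a little care are the scaling-invariance of the two identities (so that Proposition~\ref{p:s0-1} may be invoked) and the passage from $t$ to $t+l\gamma$, which is what makes the asymptotic formula \eqref{e:sult} applicable for arbitrarily small $t$ and collapses the relevant index set to a single cyclic class. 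I do not anticipate a serious obstacle.
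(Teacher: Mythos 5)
Your proof is correct, and its core is the same as the paper's: reduce to the $S$-visualized case and derive the left-hand sides from Theorem~\ref{t:rotate} (with $s=0$) together with Lemma~\ref{l:sp}. The only real difference is how the right-hand sides $(S^tR)_{i\cdot}$ and $(CS^t)_{\cdot i}$ are identified: the paper additionally invokes its earlier ``without loss of generality'' reduction to one-element cyclic classes, so that $S_{N_cN_c}$ is a permutation matrix and $e_i^TS^t=e_j^T$ gives the result in one line, whereas you keep the cyclic classes intact and instead use the periodicity relations \eqref{e:sr=r} to shift $t$ into the range where \eqref{e:sult} applies, and then the coincidence of rows of $R$ (columns of $C$) within a cyclic class to collapse the resulting max. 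This costs a little more bookkeeping (the passage $t\mapsto t+l\gamma$, uniqueness and nonemptiness of the class $[j]$ with $[i]\to_t[j]$, and the scaling-invariance check you rightly flag), but it buys a more self-contained argument that does not lean on the informal class-collapsing reduction; the paper's version is shorter at the price of taking that reduction for granted.
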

\begin{proof}
We assume that $A$ is $S$-visualized, and we also assume that
$S_{N_cN_c}$ is a permutation matrix. For $[i]\to_t [j]$, Theorem
\ref{t:rotate} and Lemma \ref{l:sp} imply that
\begin{equation*}
\begin{split}
\proj_{i\cdot}^{(t)}&=e_i^T\proj^{(t)}=e_j^T\proj^{(0)}=e_j^TR=
e_i^TS^tR=(S^tR)_{i\cdot},\\
\proj_{\cdot j}^{(t)}&=\proj^{(t)}e_j=\proj^{(0)}e_i=Ce_i=CS^te_j=
(CS^t)_{\cdot j}.
\end{split}
\end{equation*}
The claim is proved.
\end{proof}

\begin{corollary}
\label{c:lindep} Let $A\in\Rpnn$ have $\lambda(A)=1$. For each
$k=1,\ldots,n$ there exist $\alpha_{ik}$ and $\beta_{ki}$, where
$k\in N_c$, such that
\begin{equation}
\label{e:lincomb} \proj_{\cdot k}^{(t)}=\bigoplus_{i\in N_c}
\alpha_{ik}\proj_{\cdot i}^{(t)},\quad
\proj_{k\cdot}^{(t)}=\bigoplus_{i\in N_c}
\beta_{ki}\proj_{i\cdot}^{(t)}.
\end{equation}
\end{corollary}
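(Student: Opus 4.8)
The plan is to reduce the general statement to the $S$-visualized case and then read off the linear dependence directly from the structure of $C$ and $R$. First I would note, exactly as in the proofs of Corollaries~\ref{c:per} and the preceding theorems, that both sides of \eqref{e:lincomb} are stable under a diagonal similarity scaling $A\mapsto D^{-1}AD$: such a scaling conjugates $\proj^{(t)}$ by $D$, so a max-linear relation among columns (rows) of $\proj^{(t)}$ with a fixed coefficient vector $\alpha$ (resp.\ $\beta$) is preserved if we replace $\alpha$ by $D^{-1}\alpha D$-scaled coefficients. Hence by Proposition~\ref{p:s0-1} we may assume $A$ is $S$-visualized, and moreover (by the remark following Theorem~\ref{t:group}) that $S_{N_cN_c}$ is a permutation matrix.

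The key step is to recall from Corollary~\ref{c:per} that $\proj_{\cdot k}^{(t)}=(CS^t)_{\cdot k}$ and $\proj_{k\cdot}^{(t)}=(S^tR)_{k\cdot}$ hold for $k\in N_c$; I want to extend these identities to arbitrary $k$. For a general column index $k$, the column $\proj_{\cdot k}^{(t)}=CS^tRe_k=CS^t R_{\cdot k}$, and $R_{\cdot k}$ is, by \eqref{csrdef}, a nonnegative vector supported on $N_c$; writing $R_{\cdot k}=\bigoplus_{i\in N_c}\alpha_{ik} e_i$ with $\alpha_{ik}:=r_{ik}$, we get $\proj_{\cdot k}^{(t)}=\bigoplus_{i\in N_c}\alpha_{ik}(CS^t)_{\cdot i}=\bigoplus_{i\in N_c}\alpha_{ik}\proj_{\cdot i}^{(t)}$ using Corollary~\ref{c:per} again. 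Symmetrically, $\proj_{k\cdot}^{(t)}=e_k^T CS^tR=C_{k\cdot}S^tR$, and $C_{k\cdot}$ is supported on $N_c$ by \eqref{csrdef}; setting $\beta_{ki}:=c_{ki}$ and invoking $\proj_{i\cdot}^{(t)}=(S^tR)_{i\cdot}$ for $i\in N_c$ gives the second identity in \eqref{e:lincomb}. Finally I would translate the coefficients back through the scaling: if $D=\diag(z)$, the relation in the scaled coordinates $\proj_{\cdot k}^{(t)}=\bigoplus_{i\in N_c}r_{ik}\proj_{\cdot i}^{(t)}$ becomes, after conjugating by $D$, the same relation for the original $\proj^{(t)}$ with $\alpha_{ik}$ replaced by $z_k^{-1}r_{ik}z_i$ (and similarly $\beta_{ki}$ by $z_i^{-1}c_{ki}z_k$), which is harmless since the statement only asserts existence of such coefficients.

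The one point that needs a little care — and the main (mild) obstacle — is making sure Corollary~\ref{c:per} is being applied only where it is valid, namely to the rows/columns of $\proj^{(t)}$ indexed by $N_c$; the general-index case is handled purely by the support structure of $C$ and $R$ and by max-linearity, not by any new periodicity argument. Everything else is bookkeeping: checking that $R_{\cdot k}\ge \bigoplus_{i\in N_c}r_{ik}e_i$ is in fact an equality (it is, since $r_{ik}=0$ for $i\notin N_c$), and that the coefficients $r_{ik},c_{ki}$ are finite, which holds because $B=(A^\gamma)^*$ is a genuine Kleene star (finite since $\lambda(A^\gamma)=\lambda(A)^\gamma=1$).
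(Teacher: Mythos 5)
Your proposal is correct and follows essentially the same route as the paper: expand $\proj^{(t)}_{\cdot k}=CS^tR_{\cdot k}$ over the support of $R_{\cdot k}$ (which lies in $N_c$), take $\alpha_{ik}=r_{ik}$, $\beta_{ki}=c_{ki}$, and identify $(CS^t)_{\cdot i}=\proj^{(t)}_{\cdot i}$ for $i\in N_c$ via Corollary~\ref{c:per}. The only difference is your preliminary reduction to the $S$-visualized case, which is harmless but unnecessary here since Corollary~\ref{c:per} is already stated for general $A$ with $\lambda(A)=1$.
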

\begin{proof}
By Corollary \ref{c:per} we have $\proj_{\cdot
i}^{(t)}=(CS^t)_{\cdot i}$ and
$\proj_{i\cdot}^{(t)}=(S^tR)_{i\cdot}$ for all $i\in N_c$. Eqn.
\eqref{e:lincomb} follows directly from $\proj^{(t)}=CS^tR$, the
coefficients $\alpha_{ik}$ (resp. $\beta_{ki}$) being taken from the
$k$th column of $R$ (resp. the $k$th row of $C$).
\end{proof}

\section{Nachtigall expansions}
\label{s:nachtexp} In this section we show that the powers $A^t$ of
$A\in\Rpnn$ can be expanded for $t\geq 3n^2$ as sum of $CSR$
products. This establishes a more general algebraic form of the
Nachtigall expansion studied in \cite{Mol-03,Nacht}.

Let $A=(a_{ij})\in\Rpnn$. Define $\lambda_1=\lambda(A)$, and let
$\crit_1=(N_1,E_1)$ be a completely reducible subdigraph of
$\crit(A)$. Set $A_1:=A$ and $K_1:=N$, where $N=\{1,\ldots,n\}$.

The elements of a Nachtigall expansion will be now defined
inductively for $\mu\geq 2$. Namely, we define $K_{\mu}:=N\bez
\cup_{i=1}^{\mu-1} N_i$ and $A_{\mu}=(a^{\mu}_{ij})\in\Rpnn$ by

\begin{equation}
\label{amu} a^{\mu}_{ij}=
\begin{cases}
a_{ij}, & \text{if $i,j\in K_{\mu}$,}\\
0, & \text{otherwise.}
\end{cases}
\end{equation}

Further define $\lambda_{\mu}:=\lambda(A_{\mu})$. If
$\lambda_{\mu}=0$ then stop, otherwise select a completely reducible
subdigraph $\crit_{\mu}=(N_{\mu},E_{\mu})$ of the critical digraph
$\crit(A_{\mu})$, and proceed as above with $\mu:=\mu+1$.

By the above procedure we define $K_{\mu}$, $A_{\mu}$,
$\lambda_{\mu}$ and $\crit_{\mu}=(N_{\mu},E_{\mu})$ for
$\mu=1,\ldots,m$, where $m\leq n$ is the last number $\mu$ such that
$\lambda_{\mu}>0$.

Denote $L:=\bigcup_{i=1}^m N_{\mu}$ and $\overline{L}=N\bez L$.

For each $\mu=1,\ldots,m$, let $\gamma_{\mu}$ be the cyclicity of
$\crit_{\mu}$. Since
$\lambda((A_{\mu}/\lambda_{\mu})^{\gamma_{\mu}})
=\lambda(A_{\mu}/\lambda_{\mu})=1$, the Kleene star
$B_{\mu}:=((A_{\mu}/\lambda_{\mu})^{\gamma_{\mu}})^*$ is finite.
Define the matrices $C_{\mu}=(c_{ij}^{\mu})\in\Rpnn$,
$R_{\mu}=(r_{ij}^{\mu})\in\Rpnn$ and
$S_{\mu}=(s_{ij}^{\mu})\in\Rpnn$ by
\begin{equation}
\label{CRSdef}
\begin{split}
c_{ij}^{\mu}&=
\begin{cases}
b_{ij}^{\mu}, &\text{if $j\in N_{\mu}$},\\
0, &\text{otherwise,}
\end{cases}\quad
r_{ij}^{\mu}=
\begin{cases}
b_{ij}^{\mu}, &\text{if $i\in N_{\mu}$},\\
0, &\text{otherwise,}
\end{cases}\\
s_{ij}^{\mu}&=
\begin{cases}
a_{ij}^{\mu}/\lambda_{\mu}, &\text{if $(i,j)\in E_{\mu}$}\\
0, &\text{otherwise.}
\end{cases}
\end{split}
\end{equation}

A schematic example of Nachtigall expansion is given in Figure~\ref{f:nacht}.

\begin{figure}
\begin{tabular}{cccc}
\begin{tikzpicture}[scale=0.7]
\draw (-2.5,2.5) node[circle,draw]{$S_1$}; 
\begin{scope}[red]
\foreach \i in
{-1.5,-0.5, 0.5, 1.5,2.5} {
%\draw (-1.5,\i) node{$0$};
\draw (\i ,2.5) node{$*$};
}
\end{scope}

\begin{scope}[brown]
\foreach \i in {1.5,0.5}
{
\draw (-2.5,\i) node{$*$};
%\draw (\i ,1.5) node{$0$};
}

\foreach \i in {-0.5, -1.5,-2.5} { \draw (-2.5,\i) node{$0$};
%\draw (\i ,1.5) node{$0$};
}
\end{scope}

\foreach \i in {0.5,1.5,2.5}
{
\foreach \j in {1.5,0.5,-0.5,-1.5,-2.5}
{
\draw (\i,\j) node{$*$};
}
}

\foreach \i in {-1.5,-0.5}
{
\foreach \j in {1.5,0.5}
{
\draw (\i,\j) node{$*$};
}
}

\foreach \i in {-1.5,-0.5} 
{ 
\foreach \j in {-0.5,-1.5,-2.5} 
{ 
\draw(\i,\j) node{$0$}; } }

\draw (-3,3) -- (3,3);
\draw (-3,-3) -- (-3,3); \draw (-3,-3) -- (3,-3); \draw (3,-3) --
(3,3);

\begin{scope}[dashed,brown]
\draw (-2,3) -- (-2,-3);
%\draw (-2.5,3) -- (-2.5,-3);
\draw (-2.5,-3.3) node{$C_1$};
\end{scope}

\begin{scope}[dashed,red]
\draw (-3,2) -- (3,2);
%\draw (-3,2.5) -- (3,2.5);
\draw (3.4,2.5) node{$R_1$};
\end{scope}

\if{
\begin{scope}[dashed]
\foreach \j in {2.25,2.5,2.75}
{
\draw (-3,\j) -- (3,\j);
}
}\fi

%\draw (3.4,2.5) node{$R_1$};
%\end{scope}

\if{
\begin{scope}[dashed]
\foreach \i in {-2.25,-2.5,-2.75}
{
\draw (\i,-3) -- (\i,3);
}
}\fi

%\draw (-2.5,-3.3) node{$C_1$};
%\end{scope}

\if{
\begin{scope}[shorten >=1pt,->]
\draw (2,2.5) -- (4.5,2.5);
\end{scope}
\draw  (4.5,2.5) node[anchor=west]{$\lambda_1, C_1, S_1, R_1$};
}\fi

\end{tikzpicture}

&&

\begin{tikzpicture}[scale=0.7]
\draw (-2.5,2.5) node{$0$}; \foreach \i in {-1.5,-0.5, 0.5,
1.5,2.5} {
%\draw (-1.5,\i) node{$0$};
\draw (\i ,2.5) node{$0$}; }

\foreach \i in {1.5,0.5, -0.5, -1.5,-2.5} { \draw (-2.5,\i)
node{$0$};
%\draw (\i ,1.5) node{$0$};
}

\begin{scope}[red]
\foreach \i in {-0.5,0.5,1.5,2.5}
{
%\draw (-1.5,\i) node{$0$};
\draw (\i ,1.5) node{$*$};
}
\end{scope}

\begin{scope}[brown]
\foreach \i in {0.5} { \draw (-1.5,\i) node{$*$};
%\draw (\i ,1.5) node{$0$};
\foreach \i in {-0.5, -1.5,-2.5} 
{ \draw (-1.5,\i) node{$0$};}
}
\end{scope}

\foreach \i in {0.5} 
{ \draw (-0.5,\i) node{$*$};}

\foreach \i in {-0.5, -1.5,-2.5} { \draw (-0.5,\i) node{$0$};
%\draw (\i ,1.5) node{$0$};
}

\foreach \i in {0.5,1.5,2.5} { \foreach \j in {0.5,-0.5,-1.5,-2.5} {
\draw (\i,\j) node{$*$}; } } \draw (-1.5,1.5)
node[circle,draw]{$S_2$}; \draw (-3,3) -- (3,3); \draw (-3,-3)
-- (-3,3); \draw (-3,-3) -- (3,-3); \draw (3,-3) -- (3,3);

\begin{scope}[dashed,brown]
\draw (-1,2) -- (-1,-3);
%\draw (-2.5,3) -- (-2.5,-3);
\draw (-1.5,-3.3) node{$C_2$};
\end{scope}

\begin{scope}[dashed,red]
\draw (-2,1) -- (3,1);
%\draw (-3,2.5) -- (3,2.5);
\draw (3.4,1.5) node{$R_2$};
\end{scope}

\draw (-2,2) -- (-2,-3);
\draw (-2,2) -- (3,2);

\if{
\begin{scope}[shorten >=1pt,->]
\draw (2,1.5) -- (4.5,1.5);
\end{scope}
\draw  (4.5,1.5) node[anchor=west]{$\lambda_2, C_2, S_2, R_2$};
}\fi
\end{tikzpicture}
\end{tabular}
\caption{Formation of first two terms in a 
Nachtigall expansion: a schematic example. Note that the structure of strongly connected
components of $\digr(A)$ is of no use here.}
\label{f:nacht}
\end{figure}
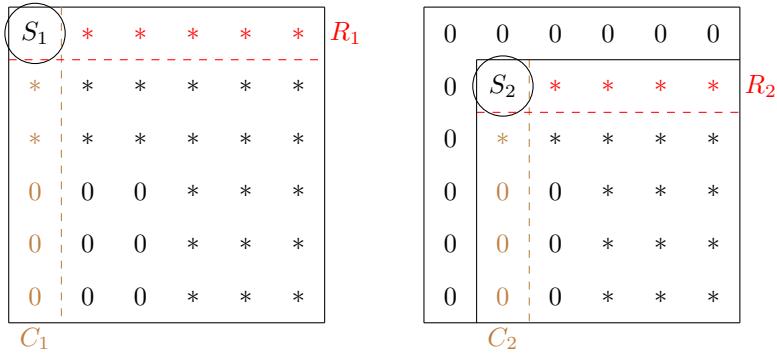

%The nonzero entries of $C_{\mu}$ and $R_{\mu}$ belong to
%the principal submatrices
%of $B_{\mu}$ extracted from columns and rows with
%indices in $N_{\mu}$.

% They may be not uniquely defined,
%depending on the order in which the components of
%the critical graph of $A_{\mu}$ are taken as $\crit_{\mu}$. However,
%the collections of $\crit_{mu}$ and $S_{\mu}$ do not depend on this order.

%The nonzero submatrix of $S_{\mu}$ is a nonnegative matrix with
%dimensions $k_{\mu}\times k_{\mu}$ where $k_{\mu}$ is the number
%of nodes in $N_{\mu}$.
It follows from Proposition~\ref{p:s0-1} that each $S_{\mu}$ can be
scaled to a $0-1$ matrix using a certain vector denoted here by
$z^{\mu}$. Note that the sets $N_{\mu}$ are pairwise disjoint.
Defining $z\in\Rpn$ by
\begin{equation}
\label{fdef} z_i=
\begin{cases}
z^{\mu}_i, & \text{if $i\in N_{\mu}$},\\
1, & \text{if $i\in\overline{L}$},
\end{cases}
\end{equation}
%The critical graph of $S_{\mu}$
%is precisely $\crit_{\mu}$, it is strongly connected and coincides
%with $\digr(S_{\mu})$. In this situation it can be shown that all nonzero
%columns of $S_{\mu}^*$ are proportional to one vector, which we denote by
%$z^{\mu}$. Using this vector to scale the nonzero submatrix of $S_{\mu}$,
%we obtain $(z_i^{\mu})^{-1}s_{ij}^{\mu}z_j=1$ for all $(i,j)\in E_{\mu}$.
%Note that the supports of $z^{\mu}$ for different $\mu$ are pairwise disjoint.
and letting $D:=\diag(z)$, we obtain that the matrix
$\Tilde{A}:=D^{-1}AD$ is {\em totally $S$-visualized}, meaning that
all corresponding matrices $\Tilde{S}_{\mu}$ are Boolean.% Further we
%can assume this property without loss of generality.

As $S_{\mu}$ can be scaled to be Boolean, the sequences of their
max-algebraic powers $\{S_{\mu}^t\mid t\geq 0\}$, being powers of
Boolean matrices when scaled, are ultimately periodic with periods
$\gamma_{\mu}$.  This periodicity starts at most after the
corresponding Wielandt numbers $(k_{\mu}-1)^2+1$ where $k_{\mu}$ is
the number of elements in $N_{\mu}$

We proceed with some notation. Denote  $\mu(i)=\mu$ if $i\in
N_{\mu}$, and $\mu(i)=+\infty$ if $i\in \overline{L}$. Denote by
$\Pi_{ij,t}$ the set of paths on $\digr(A)$ which connect $i$ to $j$
and have length $t$. Denote by $\Pi_{ij,t}^{\mu}$ the set of paths
$P\in\Pi_{ij,t}$ such that $\min_{i\in N_P} \mu(i)=\mu$, where $N_P$
is the set of nodes visited by $P$. (Note that greater values of
$\mu$ correspond to smaller $\lambda_{\mu}$.) The paths in
$\Pi_{ij,t}^{\mu}$ will be called {\em $\mu$-heavy}, since they are
$\crit_{\mu}$-heavy (see Sect. \ref{s:projector}) in $A_{\mu}$.

Any path $P\in\Pi_{ij,t}$ with $t\geq n$ has at least one cycle.
Hence there are no paths with length $t\geq n$ that visit only the
nodes in $\overline{L}$, for otherwise the subdigraph of $\Digr(A)$
induced by $\overline{L}$ would contain a cycle and the number of
components would be more than $m$. We can express the entries of
$A^t=(a^t_{ij})$ and $A_{\mu}^t=(a^{\mu,t}_{ij})$ for $t\geq n$ as
follows:
\begin{equation}
\label{aijmut}
\begin{split}
a^t_{ij}&=w(\Pi_{ij,t})=\bigoplus_{\mu=1}^m w(\Pi_{ij,t}^{\mu}),\quad t\geq n\\
a_{ij}^{\mu,t}&=\bigoplus_{\nu=\mu}^m w(\Pi_{ij,t}^{\nu}),\quad
t\geq n.
\end{split}
\end{equation}

Denote
\begin{equation}
\nacht_{\mu}^{(t)}:=C_{\mu}S^t_{\mu}R_{\mu}.
\end{equation}
These $CSR$ products are defined from $A_{\mu}$ and $\crit_{\mu}$ in
the same way as $\proj^{(t)}$, see Sect. \ref{s:projector}, were
defined from $A$ and $\crit$, and it follows that the sequence
$\nacht_{\mu}^{(t)}$ is periodic with period $\gamma_{\mu}$. Further
denote by $\tau_{\mu}$ the greatest cyclicity of a component in
$\crit_{\mu}$ and by $n_{\mu}$ the number of nodes in $K_{\mu}$. The
following is a version of Theorem~\ref{pathology0} for
$\nacht_{\mu}^{(t)}$.

\begin{theorem}[$\mu$-Heavy Paths]
\label{pathology1} Let $T_{\mu}$ be such that $\{S_{\mu}^t,\;t\geq
T_{\mu}\}$ is periodic.
\begin{itemize}
\item[1.] For $t\geq 0$,
\begin{equation}
\label{pathsleq11}
w(\Pi_{ij,t}^{\mu})\leq\lambda_{\mu}^t (\nacht_{\mu}^{(t)})_{ij}%(C_{\mu}S_{\mu}^tR_{\mu})_{ij}.
\end{equation}
\item[2.] For $t\geq T_{\mu}+2\tau_{\mu}(n_{\mu}-1)$ ,
\begin{equation}
\label{pathsgeq11}
w(\Pi_{ij,t}^{\mu})\geq\lambda_{\mu}^t(\nacht_{\mu}^{(t)})_{ij}%(C_{\mu}S_{\mu}^tR_{\mu})_{ij}.
\end{equation}
\end{itemize}
\end{theorem}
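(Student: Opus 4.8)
The plan is to reduce Theorem~\ref{pathology1} to Theorem~\ref{pathology0} by recognizing that the objects $C_\mu, S_\mu, R_\mu$ and the $\mu$-heavy paths are, up to the scalar normalization by $\lambda_\mu$, exactly the objects $C, S, R$ and $\Crit$-heavy paths associated with the definite matrix $A_\mu/\lambda_\mu$ and its subdigraph $\crit_\mu$. First I would observe that $\lambda(A_\mu/\lambda_\mu)=1$ by homogeneity of the m.c.g.m., that $\crit_\mu$ is a completely reducible subdigraph of $\crit(A_\mu/\lambda_\mu)=\crit(A_\mu)$, and that the matrices defined in~\eqref{CRSdef} for the index $\mu$ coincide with those defined in~\eqref{csrdef} when the latter construction is applied to $A_\mu/\lambda_\mu$ with the chosen subdigraph $\crit_\mu$: indeed $B_\mu=((A_\mu/\lambda_\mu)^{\gamma_\mu})^*$ is exactly the ``$B$'' of Section~\ref{s:projector}, and $s^\mu_{ij}=a^\mu_{ij}/\lambda_\mu$ on $E_\mu$ is exactly the ``$S$'' entry for $A_\mu/\lambda_\mu$. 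Consequently $\nacht_\mu^{(t)}=C_\mu S_\mu^t R_\mu$ is literally the operator $\proj^{(t)}$ built from $A_\mu/\lambda_\mu$, so Theorem~\ref{pathology0} applies to it verbatim.

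Second, I would relate path weights. A path $P$ of length $t$ in $\digr(A)$ is $\mu$-heavy, i.e.\ $P\in\Pi_{ij,t}^\mu$, precisely when it visits a node in $N_\mu$ but no node in $N_1\cup\cdots\cup N_{\mu-1}$; hence $P$ lives entirely inside the induced subdigraph on $K_\mu$, which is the digraph of $A_\mu$, and it is $\crit_\mu$-heavy there. Thus $\Pi_{ij,t}^\mu\subseteq \Pi^{\hlabel}_{ij,t}(A_\mu/\lambda_\mu)$ with weights related by $w_{A_\mu/\lambda_\mu}(P)=w_A(P)/\lambda_\mu^{t}$ for a path of length $t$ (each of the $t$ edges contributes a factor $1/\lambda_\mu$). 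Conversely, any $\crit_\mu$-heavy path of length $t$ in $A_\mu/\lambda_\mu$ is by definition a path in $\digr(A_\mu)$, hence in $\digr(A)$ on $K_\mu$, visiting $N_\mu$ and avoiding $N_1\cup\cdots\cup N_{\mu-1}$, so it lies in $\Pi_{ij,t}^\mu$; therefore the two path sets are in fact equal, and $w(\Pi_{ij,t}^\mu)=\lambda_\mu^t\, w\bigl(\Pi^{\hlabel}_{ij,t}(A_\mu/\lambda_\mu)\bigr)$.

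Third, I would simply quote Theorem~\ref{pathology0} applied to $A_\mu/\lambda_\mu$. Part~1 gives $w\bigl(\Pi^{\hlabel}_{ij,t}(A_\mu/\lambda_\mu)\bigr)\le (\nacht_\mu^{(t)})_{ij}$ for all $t\ge 0$; multiplying by $\lambda_\mu^t$ yields~\eqref{pathsleq11}. Part~2 gives the reverse inequality for $t\ge T_\mu+2\tau_\mu(n_\mu-1)$, where $T_\mu$ is any time past which $\{S_\mu^t\}$ is periodic, $\tau_\mu$ is the maximal cyclicity of a component of $\crit(A_\mu/\lambda_\mu)$ — but here we only need it for the chosen subdigraph $\crit_\mu$, whose component cyclicities are among those in the statement, so $\tau_\mu$ as defined (greatest cyclicity of a component in $\crit_\mu$) and $n_\mu=|K_\mu|$ play the roles of $\tau$ and $n$; multiplying by $\lambda_\mu^t$ yields~\eqref{pathsgeq11}.

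The only genuine point requiring care — the main (mild) obstacle — is bookkeeping the bound in part~2: Theorem~\ref{pathology0} is stated with $n$ the size of the ambient matrix and $\tau$ the maximal cyclicity of a component of $\crit(A)$, whereas here the ``ambient'' matrix for the reduction is $A_\mu/\lambda_\mu$, which is still $n\times n$ but whose nonzero structure is confined to $K_\mu$. One must check that the cycle-deletion argument in the proof of Theorem~\ref{pathology0}, part~2, only ever deletes cycles inside $K_\mu$ and hence the relevant path-length bound is $2\tau_\mu(n_\mu-1)$ with $n_\mu=|K_\mu|$ rather than $2\tau(n-1)$; this is immediate because a $\mu$-heavy path never leaves $K_\mu$, so all its subpaths $P_{\begp},P_{\endp}$ and the cycles removed from them are supported on $K_\mu$. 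With that remark the proof is a one-line invocation of Theorem~\ref{pathology0}.
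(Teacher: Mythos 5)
Your proposal is correct and follows essentially the same route as the paper, which simply normalizes to $\lambda_{\mu}=1$ and invokes Theorem~\ref{pathology0} for $A_{\mu}$ with the chosen subdigraph $\crit_{\mu}$. Your extra care in identifying $\Pi_{ij,t}^{\mu}$ with the $\crit_{\mu}$-heavy paths of $A_{\mu}$ and in checking that the cycle-deletion step stays inside $K_{\mu}$ (so the bound involves $n_{\mu}$ and $\tau_{\mu}$ rather than $n$ and $\tau$) is a welcome elaboration of details the paper leaves implicit.
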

\begin{proof}
We can w.l.o.g. assume that $\lambda_{\mu}=1$, since both
\eqref{pathsleq11} and \eqref{pathsgeq11} are stable under scalar
multiplication of $A$. After this, the claim follows from Theorem
\ref{pathology0}.
\end{proof}

In the theorem above, we can choose $T_{\mu}$ equal to each other
and of the order $O(n^2)$ for all $\mu$. The main result of this
section immediately follows now from Theorem~\ref{pathology1} and
Eqn.~\eqref{aijmut}, noting that $T_{\mu}+2\tau_{\mu}(n_{\mu}-1)\leq
3n^2$ for all $\mu$.

\begin{theorem}[Nachtigall expansion]
\label{t:nacht} Let $A\in\Rpnn$. Then for all $t\geq 3n^2$
\begin{equation}
\label{e:nachtrep} A_{\mu}^t=\bigoplus_{\nu=\mu}^m
\lambda_{\nu}^t\nacht_{\nu}^{(t)}.
\end{equation}
In particular,
\begin{equation}
\label{e:nachtrepa} A^t=\bigoplus_{\nu=1}^m
\lambda_{\nu}^t\nacht_{\nu}^{(t)}.
\end{equation}
\end{theorem}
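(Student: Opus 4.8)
The plan is to combine the two displays that precede the statement, namely the path-weight identities in Eqn.~\eqref{aijmut} and the path-counting estimates of Theorem~\ref{pathology1}. The point is that for $t\geq n$ the entries of $A_\mu^t$ are partitioned according to which ``level'' $\nu$ is the heaviest one visited by an optimal path, and each such piece $w(\Pi_{ij,t}^\nu)$ is pinned between $\lambda_\nu^t(\nacht_\nu^{(t)})_{ij}$ from below (once $t$ is large) and from above (for all $t$). So the whole argument is a matter of checking that $t\geq 3n^2$ is enough for the lower bound to kick in simultaneously for every $\nu$ appearing in the sum.

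First I would make the threshold bookkeeping explicit. By Proposition~\ref{p:s0-1} each $S_\mu$ is diagonally similar to a $0$--$1$ matrix, so $\{S_\mu^t,\ t\geq 0\}$ is periodic after its Wielandt number $(k_\mu-1)^2+1\leq (n-1)^2+1< n^2$, where $k_\mu=|N_\mu|\leq n$; thus we may take a common value $T_\mu=T$ with $T\leq n^2$ for all $\mu$. Next, $\tau_\mu\leq n_\mu\leq n$ and $n_\mu-1<n$, so $T_\mu+2\tau_\mu(n_\mu-1)\leq n^2+2n^2=3n^2$. Hence for every $t\geq 3n^2$ both parts of Theorem~\ref{pathology1} apply to every $\mu=1,\dots,m$, giving the exact equality
\begin{equation}
w(\Pi_{ij,t}^\mu)=\lambda_\mu^t(\nacht_\mu^{(t)})_{ij}\qquad\text{for all }i,j\text{ and all }\mu,\ t\geq 3n^2.
\end{equation}

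Then I would plug this into the second line of Eqn.~\eqref{aijmut}. Since $3n^2\geq n$, that identity is valid, and summing the above over $\nu\geq\mu$ yields
\begin{equation}
a_{ij}^{\mu,t}=\bigoplus_{\nu=\mu}^m w(\Pi_{ij,t}^\nu)=\bigoplus_{\nu=\mu}^m\lambda_\nu^t(\nacht_\nu^{(t)})_{ij},
\end{equation}
which is \eqref{e:nachtrep} entrywise; specializing to $\mu=1$, where $A_1=A$ and $K_1=N$, gives \eqref{e:nachtrepa}. The one point that needs a word of care — and the only place where anything could go wrong — is the interplay between the visualization scaling and the powers: the matrices $\nacht_\nu^{(t)}$ are built from $A_\nu$ and $\crit_\nu$, and Theorem~\ref{pathology1} was proved after passing to an $S$-visualized scaling, so I would note that by~\eqref{fdef} a single $D=\diag(z)$ simultaneously makes all the $S_\mu$ Boolean (``totally $S$-visualized''), that diagonal similarity commutes with taking powers, Kleene stars and the extraction of $C_\mu,R_\mu,S_\mu$, and that all quantities in~\eqref{e:nachtrep} transform consistently; hence proving the identity for $D^{-1}AD$ proves it for $A$. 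Beyond that the proof is immediate, exactly as the text says (``immediately follows''), so I would keep it to the two lines above plus the threshold computation.
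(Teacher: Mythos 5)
Your proposal is correct and is essentially the paper's own argument: the paper states that the theorem "immediately follows from Theorem~\ref{pathology1} and Eqn.~\eqref{aijmut}, noting that $T_{\mu}+2\tau_{\mu}(n_{\mu}-1)\leq 3n^2$ for all $\mu$," which is exactly your combination of the two-sided path estimates with the path decomposition, plus the same Wielandt-number bookkeeping. Your added remark on the compatibility of the visualization scaling with all the quantities involved is a fair point of care but does not change the route.
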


%In the previous works on Nachtigall expansion \cite{Mol-03,Nacht},
%$\crit_{\mu}$ was defined as an arbitrary critical cycle of
%$A_{\mu}$. We consider a wider class of completely reducible
%subdigraph of $\crit(A_{\mu})$, and an evident {\em canonical}
%choice is to take $\crit_{\mu}:=\crit(A_{\mu})$.

\section{Ultimate expansion}
\label{s:ultexp} In this section we construct a different expansion
of $A^t$ which we call the ultimate expansion, in order to describe
the ultimate behavior of $A^t$. This expansion is related to the
Nachtigall expansion of Section~\ref{s:nachtexp} with the selection
rule $\crit_{\mu}:=\crit(A_{\mu})$. The latter expansion will be
called the {\em canonical Nachtigall expansion}.

\begin{figure}
\begin{tabular}{cccc}
\begin{tikzpicture}[scale=0.7]
\draw (-2.5,2.5) node[circle,draw]{$S_1$}; 
\begin{scope}[red]
\foreach \i in
{-1.5,-0.5, 0.5, 1.5,2.5} {
%\draw (-1.5,\i) node{$0$};
\draw (\i ,2.5) node{$*$};
}
\end{scope}

\begin{scope}[brown]
\foreach \i in {1.5,0.5}
{
\draw (-2.5,\i) node{$*$};
%\draw (\i ,1.5) node{$0$};
}

\foreach \i in {-0.5, -1.5,-2.5} { \draw (-2.5,\i) node{$0$};
%\draw (\i ,1.5) node{$0$};
}
\end{scope}

\foreach \i in {0.5,1.5,2.5}
{
\foreach \j in {1.5,0.5,-0.5,-1.5,-2.5}
{
\draw (\i,\j) node{$*$};
}
}

\foreach \i in {-1.5,-0.5}
{
\foreach \j in {1.5,0.5}
{
\draw (\i,\j) node{$*$};
}
}

\foreach \i in {-1.5,-0.5} 
{ 
\foreach \j in {-0.5,-1.5,-2.5} 
{ 
\draw(\i,\j) node{$0$}; } }

\draw (-3,3) -- (3,3);
\draw (-3,-3) -- (-3,3); \draw (-3,-3) -- (3,-3); \draw (3,-3) --
(3,3);

\begin{scope}[dashed,brown]
\draw (-2,3) -- (-2,-3);
%\draw (-2.5,3) -- (-2.5,-3);
\draw (-2.5,-3.3) node{$C_1$};
\end{scope}

\begin{scope}[dashed,red]
\draw (-3,2) -- (3,2);
%\draw (-3,2.5) -- (3,2.5);
\draw (3.4,2.5) node{$R_1$};
\end{scope}

\if{
\begin{scope}[dashed]
\foreach \j in {2.25,2.5,2.75}
{
\draw (-3,\j) -- (3,\j);
}
}\fi

%\draw (3.4,2.5) node{$R_1$};
%\end{scope}

\if{
\begin{scope}[dashed]
\foreach \i in {-2.25,-2.5,-2.75}
{
\draw (\i,-3) -- (\i,3);
}
}\fi

%\draw (-2.5,-3.3) node{$C_1$};
%\end{scope}

\if{
\begin{scope}[shorten >=1pt,->]
\draw (2,2.5) -- (4.5,2.5);
\end{scope}
\draw  (4.5,2.5) node[anchor=west]{$\lambda_1, C_1, S_1, R_1$};
}\fi

\end{tikzpicture}

&&

\begin{tikzpicture}[scale=0.7]
\draw (-2.5,2.5) node{$0$}; \foreach \i in {-1.5,-0.5, 0.5,
1.5,2.5} {
%\draw (-1.5,\i) node{$0$};
\draw (\i ,2.5) node{$0$}; }

\foreach \i in {1.5,0.5, -0.5, -1.5,-2.5} { \draw (-2.5,\i)
node{$0$};
%\draw (\i ,1.5) node{$0$};
}

\begin{scope}[red]
\foreach \i in {-0.5,0.5,1.5,2.5}
{
%\draw (-1.5,\i) node{$0$};
\draw (\i ,1.5) node{$*$};
}
\end{scope}

\begin{scope}[brown]
\foreach \i in {0.5} { \draw (-1.5,\i) node{$*$};
%\draw (\i ,1.5) node{$0$};
\foreach \i in {-0.5, -1.5,-2.5} 
{ \draw (-1.5,\i) node{$0$};}
}
\end{scope}

\foreach \i in {0.5} 
{ \draw (-0.5,\i) node{$*$};}

\foreach \i in {-0.5, -1.5,-2.5} { \draw (-0.5,\i) node{$0$};
%\draw (\i ,1.5) node{$0$};
}

\foreach \i in {0.5,1.5,2.5} { \foreach \j in {0.5,-0.5,-1.5,-2.5} {
\draw (\i,\j) node{$*$}; } } \draw (-1.5,1.5)
node[circle,draw]{$S_2$}; \draw (-3,3) -- (3,3); \draw (-3,-3)
-- (-3,3); \draw (-3,-3) -- (3,-3); \draw (3,-3) -- (3,3);

\begin{scope}[dashed,brown]
\draw (-1,2) -- (-1,-3);
%\draw (-2.5,3) -- (-2.5,-3);
\draw (-1.5,-3.3) node{$C_2$};
\end{scope}

\begin{scope}[dashed,red]
\draw (-2,1) -- (3,1);
%\draw (-3,2.5) -- (3,2.5);
\draw (3.4,1.5) node{$R_2$};
\end{scope}

\draw (-2,2) -- (-2,-3);
\draw (-2,2) -- (3,2);

\if{
\begin{scope}[shorten >=1pt,->]
\draw (2,1.5) -- (4.5,1.5);
\end{scope}
\draw  (4.5,1.5) node[anchor=west]{$\lambda_2, C_2, S_2, R_2$};
}\fi
\end{tikzpicture}\\

\begin{tikzpicture}[scale=0.7]
\draw (-2.5,2.5) node[circle,draw]{$S^{\ulabel}_1$};

\draw (-1.5,1.5) node{$\digr_1$};

\begin{scope}[red]
\foreach \i in {-1.5,-0.5, 0.5, 1.5,2.5}
{
%\draw (-1.5,\i) node{$0$};
\draw (\i ,2.5) node{$*$};
}
\end{scope}

\begin{scope}[brown]
\foreach \i in {1.5,0.5}
{
\draw (-2.5,\i) node{$*$};
%\draw (\i ,1.5) node{$0$};
}
\foreach \i in {-2.5} 
{ 
\foreach \j in {-0.5,-1.5,-2.5} 
{
\draw (\i,\j) node{$0$}; 
} 
}

\end{scope}

\foreach \i in {-0.5,0.5,1.5,2.5}
{
%\draw (-1.5,\i) node{$0$};
\draw (\i ,1.5) node{$*$};
}

\foreach \i in {0.5}
{
\draw (-1.5,\i) node{$*$};
%\draw (\i ,1.5) node{$0$};
}

\foreach \i in {-0.5,-1.5} { \foreach \j in {-0.5,-1.5,-2.5} {
\draw (\i,\j) node{$0$}; } }

\foreach \i in {0.5,1.5,2.5}
{
\foreach \j in {-0.5,-1.5,-2.5}
{
\draw (\i,\j) node{$*$};
}
}

\foreach \i in {-0.5,0.5,1.5,2.5}
{
\draw (\i, 0.5) node{$*$};
%\draw (\i ,1.5) node{$0$};
}

\draw (-3,3) -- (3,3);
\draw (-3,-3) -- (-3,3); \draw (-3,-3) -- (3,-3); \draw (3,-3) --
(3,3);
\begin{scope}[dashed]
\begin{scope}[brown]
\draw (-2,3) -- (-2,-3);
\draw (-2.5,-3.4) node {$C_1^{\circ}$};
\end{scope}
\begin{scope}[red]
\draw (-3,2) -- (3,2);
\draw (3.4,2.5) node {$R_1^{\circ}$};
\end{scope}
\draw (-1.5,1.5) circle (1.25cm);
\end{scope}
\if{
\begin{scope}[shorten >=1pt,->]
\draw (2,2.5) -- (4.5,2.5);
\end{scope}
\draw  (4.5,2.5) node[anchor=west]{$\lambda_1, C_1, S_1, R_1$};
}\fi
\end{tikzpicture}
&&
\begin{tikzpicture}[scale=0.7]
\foreach \i in {-2.5,-1.5,-0.5} { \foreach \j in
{2.5,1.5,0.5,-0.5,-1.5,-2.5} { \draw (\i,\j) node{$0$}; } }
\foreach \i in {0.5,1.5,2.5} { \foreach \j in {2.5,1.5, 0.5} { \draw
(\i,\j) node{$0$}; } } 

\begin{scope}[red]
\foreach \i in {1.5,2.5} {
%\draw (-1.5,\i) node{$0$};
\draw (\i ,-0.5) node{$*$};
}
\end{scope}

\begin{scope}[brown]
\foreach \i in {-1.5,-2.5}
{
\draw (0.5,\i) node{$*$};
%\draw (\i ,1.5) node{$0$};
}
\end{scope}

\draw (1.5,-1.5) node{$\digr_2$};
\draw (1.5,-2.5) node{$*$};
\draw (2.5,-1.5) node{$*$};
\draw (2.5,-2.5) node{$*$};

\draw (0.5,-0.5) node[circle,draw]{$S^{\ulabel}_2$}; \draw (-3,3) --
(3,3); \draw (-3,-3) -- (-3,3); \draw (-3,-3) -- (3,-3); \draw
(3,-3) -- (3,3);
\begin{scope}[dashed]
\begin{scope}[brown]
\draw (1,0) -- (1,-3);
\draw (0.5,-3.4) node {$C_2^{\circ}$};
\end{scope}
\begin{scope}[red]
\draw (0,-1) -- (3,-1);
\draw (3.4,-0.5) node {$R_2^{\circ}$};
\end{scope}
\draw (1.5,-1.5) circle (1.25cm);
\end{scope}

\draw (0,0) -- (0,-3);
\draw (0,0) -- (3,0);
\if{
\begin{scope}[shorten >=1pt,->]
\draw (2,-0.5) -- (4.5,-0.5);
\end{scope}
\draw  (4.5,-0.5) node[anchor=west]{$\lambda^{\circ}_2, C^{\circ}_2, S^{\circ}_2, R^{\circ}_2$};
}\fi
\end{tikzpicture}
\end{tabular}
\caption{Formation of first two terms in a Nachtigall expansion (upper part)
 and ultimate expansion (lower part): a schematic example. The associated
digraph consists of two components, denoted $\digr_1$ and $\digr_2$.}
\label{f:nachtult}
\end{figure}
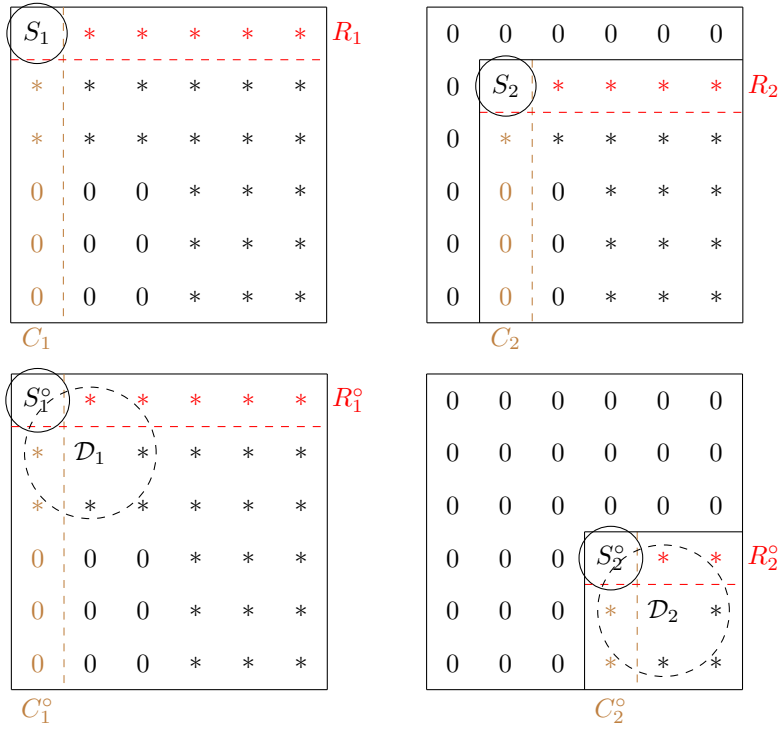

The elements of the ultimate expansion will be labeled by $\ulabel$,
since we need to distinguish them from those of the canonical
Nachtigall expansion. For instance, we will write
$A_{\mu}^{\ulabel}$ versus $A_{\mu}$, $\crit_{\mu}^{\ulabel}$ versus
$\crit_{\mu}$ and $\lambda_{\mu}^{\ulabel}$ versus $\lambda_{\mu}$,
etc.

Let $A=(a_{ij})\in\Rpnn$. Define $\lambda^{\ulabel}_1=\lambda(A)$,
let $\crit^{\ulabel}_1=(N_1^{\ulabel},E_1^{\ulabel})$ be the
critical graph of $A$ and denote by $M_1^{\ulabel}$ the set of nodes
in all components of $\digr(A)$ that contain the components of
$\crit^{\ulabel}_1$. Set $A^{\ulabel}_1:=A$ and $K_1^{\ulabel}:=N$.

By induction for $\mu\geq 2$, define $K^{\ulabel}_{\mu}:=N\bez
\cup_{i=1}^{\mu-1} M_i^{\ulabel}$ (instead of $K_{\mu}=N\bez
\cup_{i=1}^{\mu-1} N_i$ in the case of a Nachtigall expansion), and
define $A^{\ulabel}_{\mu}=(a^{\mu\ulabel}_{ij})\in\Rpnn$ by

\begin{equation}
\label{amu1} a^{\mu\ulabel}_{ij}=
\begin{cases}
a_{ij}, & \text{if $i,j\in K^{\ulabel}_{\mu}$,}\\
0, & \text{otherwise.}
\end{cases}
\end{equation}

Define $\lambda^{\ulabel}_{\mu}:=\lambda(A^{\ulabel}_{\mu})$. If
$\lambda^{\ulabel}=0$ then stop, otherwise let
$\crit^{\ulabel}_{\mu}=(N_{\mu}^{\ulabel},E_{\mu}^{\ulabel})$ be the
critical graph of $A_{\mu}$, let $M_{\mu}^{\ulabel}$ be the set of
all nodes in the components of $\digr(A)$ which contain the
components of $\crit_{\mu}^{\ulabel}$, and proceed with the above
definition for $\mu:=\mu+1$.

By the above procedure we define $K_{\mu}^{\ulabel}$,
$A_{\mu}^{\ulabel}$, $\lambda_{\mu}^{\ulabel}$,
$\crit_{\mu}^{\ulabel}=(N_{\mu}^{\ulabel}, E_{\mu}^{\ulabel})$ and
$M_{\mu}^{\ulabel}$ for $\mu=1,\ldots,m^{\ulabel}$, where
$m^{\ulabel}\leq n$ is the last number $\mu$ such that
$\lambda_{\mu}^{\ulabel}\neq 0$. Note that
$\{\lambda_{\mu}^{\ulabel},\;\mu=1,\ldots,m^{\ulabel}\}$ is the set
of m.c.g.m. of all nontrivial components of $\digr(A)$, and each
$\crit_{\mu}^{\ulabel}$ consists of the critical digraphs of
(possibly several) such components.

Let $\gamma_{\mu}^{\ulabel}$ be the cyclicity of
$\crit_{\mu}^{\ulabel}$, and let $B_{\mu}^{\ulabel}$,
$C_{\mu}^{\ulabel}$, $S_{\mu}^{\ulabel}$ and $R_{\mu}^{\ulabel}$ be
defined from
$(A_{\mu}^{\ulabel}/\lambda_{\mu}^{\ulabel})^{\gamma_{\mu}^{\ulabel}}$
in full analogy with $B_{\mu}$, $C_{\mu}$, $S_{\mu}$ and $R_{\mu}$
in Sect. \ref{s:nachtexp}. The matrices $S_{\mu}^{\ulabel}$ can be
again simultaneously scaled to $0-1$ form.

Essentially in the new construction we contract by the components of
$\digr(A)$ instead of the components of $\crit(A_{\mu})$ in the case
of the canonical Nachtigall expansion. See Figure~\ref{f:nachtult} for a visual 
comparison.

Denote by  $\lambda(i)$ the m.c.g.m. of the component of $\digr(A)$
to which $i$ belongs, and let $\lambda(i)=0$ if $\{i\}$ is a trivial component
of $\digr(A)$. For a path $P$ define $\lambda(P):=
\max_{i\in N_P} \lambda(i)$ where $N_P$ is the set of nodes visited
by $P$. Recall that $\Pi_{ij,t}$ denotes the set of paths on
$\digr(A)$ which connect $i$ to $j$ and have length $t$. Denote by
$\Pi_{ij,t}^{\mu\ulabel}$ the set of paths $P\in\Pi_{ij,t}$ such
that $P$ contains a node in $N_{\mu}^{\ulabel}$ and
$\lambda(P)=\lambda_{\mu}$. Such paths will be called {\em
$\mu$-hard}. Note that they are $\crit(A_{\mu}^{\ulabel})$-heavy
with respect to $A_{\mu}^{\ulabel}$.

Denote
\begin{equation}
\ultim_{\mu}^{(t)}:=C_{\mu}^{\ulabel}(S_{\mu}^{\ulabel})^tR_{\mu}^{\ulabel}.
\end{equation}

These $CSR$ products are defined from $A_{\mu}^{\ulabel}$ and
$\crit_{\mu}^{\ulabel}$ in the same way as $\proj^{(t)}$ were
defined in Sect.~\ref{s:projector} from $A$ and $\crit$, and it
follows that the sequence $\{\ultim_{\mu}^{(t)},\;t\geq 0\}$ has the
period $\gamma_{\mu}^{\ulabel}$. Denote by $\tau_{\mu}^{\ulabel}$
the greatest cyclicity of a component in $\crit_{\mu}^{\ulabel}$ and
by $n_{\mu}^{\ulabel}$ the number of nodes in $K_{\mu}^{\ulabel}$.
The next result follows from Theorem \ref{pathology0}.

\begin{theorem}[$\mu$-Hard paths]
\label{pathology2} Let $T_{\mu}^{\ulabel}$ be such that
$\{(S_{\mu}^{\ulabel})^t,\;t\geq T_{\mu}^{\ulabel}\}$ is periodic.
\begin{itemize}
\item[1.] For $t\geq 0$,
\begin{equation}
\label{pathsleq22}
w(\Pi_{ij,t}^{\mu\ulabel})\leq(\lambda_{\mu}^{\ulabel})^t
(\ultim_{\mu}^t)_{ij}.%(C_{\mu}S_{\mu}^tR_{\mu})_{ij}.
\end{equation}
\item[2.] For $t\geq T_{\mu}^{\ulabel}+2\tau_{\mu}^{\ulabel}(n_{\mu}^{\ulabel}-1)$ ,
\begin{equation}
\label{pathsgeq22}
w(\Pi_{ij,t}^{\mu\ulabel})\geq(\lambda_{\mu}^{\ulabel})^t
(\ultim_{\mu}^t)_{ij}.%(C_{\mu}S_{\mu}^tR_{\mu})_{ij}.
\end{equation}
\end{itemize}
\end{theorem}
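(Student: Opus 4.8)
The plan is to reduce Theorem~\ref{pathology2} to the already established Theorem~\ref{pathology0}, exactly as was done for Theorem~\ref{pathology1} in the Nachtigall case, but with one extra bookkeeping step to identify the right class of paths. First I would apply the usual normalization: since both \eqref{pathsleq22} and \eqref{pathsgeq22} are invariant under scalar multiplication $A\mapsto\alpha A$ (which multiplies $w(\Pi_{ij,t}^{\mu\ulabel})$ by $\alpha^t$ and $(\lambda_\mu^{\ulabel})^t$ by $\alpha^t$ as well), I assume without loss of generality that $\lambda_\mu^{\ulabel}=1$, so that $(A_\mu^{\ulabel})$ is a definite matrix. After this, $\ultim_\mu^{(t)}=C_\mu^{\ulabel}(S_\mu^{\ulabel})^tR_\mu^{\ulabel}$ is literally a $CSR$ product $\proj^{(t)}$ of the matrix $A_\mu^{\ulabel}$ with respect to the completely reducible subdigraph $\crit_\mu^{\ulabel}$ of $\crit(A_\mu^{\ulabel})$, and $T_\mu^{\ulabel}$, $\tau_\mu^{\ulabel}$, $n_\mu^{\ulabel}$ play the roles of $T$, $\tau$, $n$ in the statement of Theorem~\ref{pathology0}.

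The key remaining point is to check that the relevant path set, $\Pi_{ij,t}^{\mu\ulabel}$, coincides with the set of $\crit_\mu^{\ulabel}$-heavy paths of $A_\mu^{\ulabel}$ connecting $i$ to $j$ of length $t$, i.e. with $\Pi_{ij,t}^{\hlabel}$ computed in the digraph $\digr(A_\mu^{\ulabel})$. A path $P$ lies in $\Pi_{ij,t}^{\mu\ulabel}$ iff it uses only nodes of $K_\mu^{\ulabel}$ (so that it is a path of $\digr(A_\mu^{\ulabel})$, because edges leaving $K_\mu^{\ulabel}$ are zeroed out in \eqref{amu1}), it passes through some node of $N_\mu^{\ulabel}$, and $\lambda(P)=\lambda_\mu^{\ulabel}$. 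I would argue that the last condition is automatic once the first two hold: if $P\subseteq K_\mu^{\ulabel}$ then every nontrivial component of $\digr(A)$ met by $P$ has m.c.g.m.\ at most $\lambda_\mu^{\ulabel}$ (the heavier ones were removed together with their whole $\digr(A)$-components when forming $K_\mu^{\ulabel}$), so $\lambda(P)\le\lambda_\mu^{\ulabel}$; and if $P$ passes through a node of $N_\mu^{\ulabel}$, which sits in a component of m.c.g.m.\ exactly $\lambda_\mu^{\ulabel}$, then $\lambda(P)\ge\lambda_\mu^{\ulabel}$. Hence $\Pi_{ij,t}^{\mu\ulabel}$ is exactly the set of $\crit_\mu^{\ulabel}$-heavy paths of $\digr(A_\mu^{\ulabel})$ from $i$ to $j$ of length $t$. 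This is precisely where the contraction by $\digr(A)$-components (rather than by $\crit(A_\mu)$-components) is used, and it is the main — though still routine — obstacle in the argument.

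Once this identification is in place, both inequalities follow directly from Theorem~\ref{pathology0} applied to $A_\mu^{\ulabel}$: part~1 gives $w(\Pi_{ij,t}^{\mu\ulabel})=w(\Pi_{ij,t}^{\hlabel})\le\proj^{(t)}_{ij}=(\ultim_\mu^{(t)})_{ij}$ for all $t\ge 0$, and part~2 gives the reverse inequality for $t\ge T+2\tau(n-1)$ with $T=T_\mu^{\ulabel}$, $\tau=\tau_\mu^{\ulabel}$ and $n$ replaced by $n_\mu^{\ulabel}$, because cycle deletion in the proof of Theorem~\ref{pathology0} only ever shortens paths within $\digr(A_\mu^{\ulabel})$, whose node set has $n_\mu^{\ulabel}$ elements. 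Undoing the normalization $\lambda_\mu^{\ulabel}=1$ reinserts the factor $(\lambda_\mu^{\ulabel})^t$ on the right-hand sides, yielding \eqref{pathsleq22} and \eqref{pathsgeq22}. So the proof is short: normalize, identify $\Pi_{ij,t}^{\mu\ulabel}$ with the $\crit_\mu^{\ulabel}$-heavy paths of $\digr(A_\mu^{\ulabel})$, invoke Theorem~\ref{pathology0}, and de-normalize.
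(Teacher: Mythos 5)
Your proposal is correct and follows exactly the route the paper intends: the paper simply states that Theorem~\ref{pathology2} follows from Theorem~\ref{pathology0} (noting that $\mu$-hard paths are $\crit(A_{\mu}^{\ulabel})$-heavy with respect to $A_{\mu}^{\ulabel}$), and your normalization $\lambda_{\mu}^{\ulabel}=1$ plus the two-way identification of $\Pi_{ij,t}^{\mu\ulabel}$ with the $\crit_{\mu}^{\ulabel}$-heavy paths of $\digr(A_{\mu}^{\ulabel})$ is precisely the omitted bookkeeping, carried out correctly (including the point that $K_{\mu}^{\ulabel}$ is a union of whole components of $\digr(A)$, which makes the m.c.g.m.\ condition automatic). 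Your remark on replacing $n$ by $n_{\mu}^{\ulabel}$ in the threshold matches the paper's implicit convention in Theorem~\ref{pathology1}, so nothing further is needed.
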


%Further we will denote

%\begin{equation}
%\label{t'ultim}
%T'_{\ulabel}(A):=\max(T'_{\mu\ulabel}+2\tau_{\mu}^{\ulabel}(n_{\mu}^{\ulabel}-1)).
%\end{equation}

Comparing the constructions above with those of the canonical
Nachtigall expansion (see Sect. \ref{s:nachtexp} assuming that
$\crit_{\mu}:=\crit(A_{\mu})$), we see that $\crit_1$ is the same as
$\crit_{1}^{\ulabel}$ and $\lambda_1$ is the same as
$\lambda_{1}^{\ulabel}$, however, other components and values may be
not the same. We next describe relation between them.

\begin{proposition}
\label{compsrel} Each $\nu=1,\ldots,m^{\ulabel}$ corresponds to a
unique $\mu=1,\ldots,m$ such that
$\lambda_{\nu}^{\ulabel}=\lambda_{\mu}$ and all components of
$\crit_{\nu}^{\ulabel}$ are also components of $\crit_{\mu}$.
\end{proposition}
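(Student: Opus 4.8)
The plan is to compare the two inductive constructions step by step, tracking how the node sets $M_\mu^{\ulabel}$ (components of $\digr(A)$ containing critical components) relate to the sets $N_\mu$ (components of the critical subdigraphs $\crit(A_\mu)$). The key observation is that in both constructions, at stage $\mu$ one looks at $\crit(A_\mu^{\bullet})$ restricted to the currently available nodes, and the m.c.g.m.\ values encountered are exactly the distinct values $\lambda(i)$ of the nontrivial components of $\digr(A)$, taken in non-increasing order. So I would first establish the basic fact (already remarked in the text) that $\{\lambda_\mu^{\ulabel}\}$ is precisely the set of m.c.g.m.'s of nontrivial components of $\digr(A)$, each component of $\crit_\mu^{\ulabel}$ being the critical digraph of one such component, and that $\{\lambda_\mu\}$ is a (possibly finer, since $\crit_\mu$ need not be all of $\crit(A_\mu)$) listing of the same underlying spectral data.

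First I would prove the following auxiliary claim by induction on $\mu$: for each $\mu$, the set $K_\mu^{\ulabel} = N \bez \bigcup_{i<\mu} M_i^{\ulabel}$ is a union of components of $\digr(A)$ together with trivial nodes, and $A_\mu^{\ulabel}$ is the restriction of $A$ to this set. This is immediate from the definition of $M_i^{\ulabel}$ as full components. Consequently $\crit(A_\mu^{\ulabel})$ is the disjoint union, over the nontrivial components $\mathcal{G}$ of $\digr(A)$ surviving in $K_\mu^{\ulabel}$ with maximal $\lambda(\mathcal{G})$, of the critical digraphs $\crit(\mathcal{G})$; and $\lambda_\mu^{\ulabel}$ equals that maximal value. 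Thus the $\lambda_\mu^{\ulabel}$ strictly decrease and exhaust all distinct m.c.g.m.'s of nontrivial components.

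Next, for the Nachtigall side: I would show by a parallel induction that for each $\nu$ there is a well-defined $\mu = \mu(\nu)$ with $\lambda_\nu^{\ulabel} = \lambda_\mu$, and that every component of $\crit_\nu^{\ulabel}$ is a component of $\crit_\mu$. The point is that removing only the critical nodes $N_i$ (rather than whole components $M_i^{\ulabel}$) is ``less aggressive'': after the Nachtigall construction has peeled off all critical classes of value $> \lambda_\nu^{\ulabel}$, the remaining matrix $A_\mu$ still contains, intact, every arc of $\digr(A)$ lying in a component with m.c.g.m.\ equal to $\lambda_\nu^{\ulabel}$ (such arcs are never critical for a larger $\lambda$ inside their own component, and their endpoints are not critical nodes for larger $\lambda$ either, because deleting critical nodes of a component with larger $\lambda$ cannot lie in a component with smaller $\lambda$). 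Hence $\crit(A_\mu)$ contains $\crit(\mathcal{G})$ as a union of components for every surviving nontrivial component $\mathcal{G}$ with $\lambda(\mathcal{G}) = \lambda_\nu^{\ulabel}$, and since the canonical selection rule sets $\crit_\mu = \crit(A_\mu)$, we get that the components of $\crit_\nu^{\ulabel}$ — which are exactly the $\crit(\mathcal{G})$ for those $\mathcal{G}$ of value $\lambda_\nu^{\ulabel}$ surviving at ultimate-stage $\nu$ — are among the components of $\crit_\mu$. Uniqueness of $\mu$ follows because the $\lambda_\mu$ that equals a given value $\lambda_\nu^{\ulabel}$ is pinned down: it is the \emph{first} $\mu$ at which that value appears, and one checks it is also the only one, since once $\crit_\mu = \crit(A_\mu)$ captures all critical nodes of that value, subsequent $A_{\mu'}$ have strictly smaller m.c.g.m.

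The main obstacle I expect is bookkeeping the interleaving of the two peeling orders: the Nachtigall construction may take several steps (several indices $\mu$) to clear out material that the ultimate construction clears in one step $\nu$ (because a single component of $\digr(A)$ can host several critical components of different cyclicities that the Nachtigall rule could in principle split, though with the \emph{canonical} rule $\crit_\mu = \crit(A_\mu)$ it does not split within one step). I would handle this by carefully arguing that, with the canonical rule, at the first stage $\mu$ where value $\lambda_\nu^{\ulabel}$ appears, \emph{all} of $\crit(A_\mu)$ at that level $=\lambda_\mu$ is removed at once, so that value occurs for exactly one $\mu$; the decreasing-$\lambda$ structure of both constructions then makes the correspondence $\nu \mapsto \mu$ a well-defined injection, and the containment of critical components is exactly the statement that deleting whole components (ultimate side) removes a superset of what deleting only critical nodes (Nachtigall side) removes, at each matching value.
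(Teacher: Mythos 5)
Your proposal is correct and follows essentially the same route as the paper: both arguments rest on the strict decrease of $\lambda_{\mu}$ in the canonical Nachtigall expansion, the identification of each $\crit_{\nu}^{\ulabel}$ with the critical digraphs of the full components of $\digr(A)$ having m.c.g.m.\ $\lambda_{\nu}^{\ulabel}$, and the observation that such components remain untouched until the first Nachtigall step at which $\lambda_{\mu}=\lambda_{\nu}^{\ulabel}$, so that their critical digraphs appear verbatim as components of $\crit_{\mu}=\crit(A_{\mu})$. The paper's proof is simply a condensed version of your argument (it takes the structure of the $K_{\mu}^{\ulabel}$ as already established in the preceding text), so no substantive difference remains.
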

\begin{proof}
Consider the canonical Nachtigall expansion. Note that
$\lambda_{\mu}$ strictly decrease, as at each step of the definition
we remove the whole critical digraph. Now pick arbitrary
$\lambda_{\nu}^{\ulabel}$, which is the m.c.g.m. of some component
of $\digr(A)$. There is a reduction step when $\crit_{\mu}$ for the
first time intersects with a component of $\digr(A)$ whose m.c.g.m.
is $\lambda_{\nu}^{\ulabel}$. Then
$\lambda_{\mu}=\lambda_{\nu}^{\ulabel}$, and $\crit_{\mu}$ has to
contain all components of $\crit_{\nu}^{\ulabel}$, precisely as they
are. This proves the claim.
\end{proof}

Further we renumber $\lambda_{\nu}^{\ulabel}$  so that $\nu=\mu$ if
$\lambda_{\nu}^{\ulabel}=\lambda_{\mu}$, meaning that the numbering
of $\lambda_{\nu}^{\ulabel}$ is adjusted to that of $\lambda_{\mu}$.
This defines a subset $\Sigma$ of $\{1,\ldots,m\}$, such that
$\lambda_{\mu}=\lambda_{\mu}^{\ulabel}$ if and only if
$\mu\in\Sigma$.

\begin{corollary}
\label{c:gammas}  $\gamma_{\mu}$ is a multiple of
$\gamma_{\mu}^{\ulabel}$ for each $\mu\in\Sigma$.
\end{corollary}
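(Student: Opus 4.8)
The plan is to leverage Proposition~\ref{compsrel}, which already tells us that for $\mu\in\Sigma$ the components of $\crit_{\mu}^{\ulabel}$ are, verbatim, among the components of $\crit_{\mu}$. Recall that $\gamma_{\mu}$ is the cyclicity of $\crit_{\mu}=\crit(A_{\mu})$, computed as the l.c.m. of the cyclicities of its strongly connected components, and likewise $\gamma_{\mu}^{\ulabel}$ is the l.c.m. of the cyclicities of the components of $\crit_{\mu}^{\ulabel}$. Since every component of $\crit_{\mu}^{\ulabel}$ is a component of $\crit_{\mu}$, the set of cyclicities entering the l.c.m. defining $\gamma_{\mu}^{\ulabel}$ is a subset of the set entering the l.c.m. defining $\gamma_{\mu}$. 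The l.c.m. of a subset of a finite set of positive integers always divides the l.c.m. of the whole set, so $\gamma_{\mu}^{\ulabel}\mid\gamma_{\mu}$, which is exactly the assertion.

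Concretely, I would write: fix $\mu\in\Sigma$. By Proposition~\ref{compsrel} each strongly connected component of $\crit_{\mu}^{\ulabel}$ is also a strongly connected component of $\crit_{\mu}$; denote the cyclicities of the components of $\crit_{\mu}^{\ulabel}$ by $\pi_1,\ldots,\pi_k$ and those of $\crit_{\mu}$ by $\pi_1,\ldots,\pi_k,\pi_{k+1},\ldots,\pi_r$ (after suitable relabeling). Then $\gamma_{\mu}^{\ulabel}=\operatorname{lcm}(\pi_1,\ldots,\pi_k)$ and $\gamma_{\mu}=\operatorname{lcm}(\pi_1,\ldots,\pi_r)$. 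Since $\operatorname{lcm}(\pi_1,\ldots,\pi_k)$ divides $\operatorname{lcm}(\pi_1,\ldots,\pi_k)\cdot\pi_{k+1}\cdots$ — more cleanly, since each $\pi_i$ with $i\le k$ divides $\gamma_{\mu}$, the least common multiple of $\pi_1,\ldots,\pi_k$ also divides $\gamma_{\mu}$ — we conclude $\gamma_{\mu}^{\ulabel}\mid\gamma_{\mu}$.

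There is really no substantial obstacle here: the corollary is a direct number-theoretic consequence of the inclusion of component sets established in Proposition~\ref{compsrel}, combined with the definition of cyclicity of a completely reducible digraph as the l.c.m. over components (recalled in Section~\ref{s:prel}). The only point requiring a word of care is making sure we are comparing the right objects — that the cyclicities appearing in $\gamma_{\mu}^{\ulabel}$ genuinely are cyclicities of components of $\crit_{\mu}$ and not merely of something sharing the same m.c.g.m. — but Proposition~\ref{compsrel} gives precisely this, stating the components occur "precisely as they are." So the proof is a two-line invocation of that proposition plus the elementary fact that the l.c.m. of a sub-collection divides the l.c.m. of the full collection.

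\begin{proof}
Fix $\mu\in\Sigma$. By Proposition~\ref{compsrel}, every strongly connected component of $\crit_{\mu}^{\ulabel}$ is also a component of $\crit_{\mu}$. Hence, if $\pi_1,\ldots,\pi_k$ are the cyclicities of the components of $\crit_{\mu}^{\ulabel}$, then each $\pi_i$ is the cyclicity of some component of $\crit_{\mu}$, so $\pi_i$ divides $\gamma_{\mu}$. Therefore $\gamma_{\mu}^{\ulabel}=\operatorname{lcm}(\pi_1,\ldots,\pi_k)$ also divides $\gamma_{\mu}$.
\end{proof}
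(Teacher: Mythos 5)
Your proof is correct and follows exactly the paper's route: the paper's own proof is the one-line observation that, after the renumbering, all components of $\crit_{\mu}^{\ulabel}$ are also components of $\crit_{\mu}$ by Proposition~\ref{compsrel}, leaving the l.c.m. divisibility implicit. You have merely spelled out that elementary final step, which is fine.
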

\begin{proof}
With the new numbering, all components of $\crit_{\mu}^{\ulabel}$
are also components of $\crit_{\mu}$ by Proposition~\ref{compsrel}.
\end{proof}

Unlike $\mu$-heavy paths, $\mu$-hard paths do not cover the whole
path sets $\Pi_{ij,t}$ in general. However evidently
\begin{equation}
\label{e:heavymeaning}
(A_{\mu}^{\ulabel})^t_{ij}=w(\Pi_{ij,t}^{\mu\ulabel}),\quad\text{if
$i\in N_{\mu}^{\ulabel}$ or $j\in N_{\mu}^{\ulabel}$.}
\end{equation}

From this and Theorem~\ref{pathology2} we deduce the following.

\begin{proposition}
\label{t:nacht2} Let $A\in\Rpnn$. For all $t\geq 3n^2$
\begin{equation}
\label{e:nacht22}
(A_{\mu}^{\ulabel})^t_{ij}=\lambda_{\mu}^t\ultim_{\mu}^{(t)},
\quad\text{if $i\in N_{\mu}^{\ulabel}$ or $j\in N_{\mu}^{\ulabel}$.}
\end{equation}
\end{proposition}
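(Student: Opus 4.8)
The plan is to derive Proposition~\ref{t:nacht2} as a direct specialization of Theorem~\ref{pathology2} once I fix the bookkeeping on the threshold. First I would invoke the identity~\eqref{e:heavymeaning}, which says that when $i\in N_\mu^{\ulabel}$ or $j\in N_\mu^{\ulabel}$, every path from $i$ to $j$ in $\digr(A_\mu^{\ulabel})$ is automatically $\mu$-hard (it must pass through a node of $N_\mu^{\ulabel}$ and, since $A_\mu^{\ulabel}$ only lives on $K_\mu^{\ulabel}$ and $N_\mu^{\ulabel}$ is the critical digraph there, it satisfies $\lambda(P)=\lambda_\mu$). Hence $(A_\mu^{\ulabel})^t_{ij}=w(\Pi_{ij,t}^{\mu\ulabel})$ exactly, with no ``$\bigoplus_\nu$'' residue. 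Then Theorem~\ref{pathology2}, parts 1 and 2 together, gives $w(\Pi_{ij,t}^{\mu\ulabel})=(\lambda_\mu^{\ulabel})^t(\ultim_\mu^{(t)})_{ij}$ as soon as $t\geq T_\mu^{\ulabel}+2\tau_\mu^{\ulabel}(n_\mu^{\ulabel}-1)$.

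Next I would discharge the threshold. Since $S_\mu^{\ulabel}$ can be scaled to a $0$--$1$ matrix on $n_\mu^{\ulabel}$-or-fewer critical nodes, $\{(S_\mu^{\ulabel})^t\}$ becomes periodic after its Wielandt number, so one may take $T_\mu^{\ulabel}$ of order $O(n^2)$, uniformly in $\mu$; combined with $\tau_\mu^{\ulabel}\le n$ and $n_\mu^{\ulabel}\le n$ one gets $T_\mu^{\ulabel}+2\tau_\mu^{\ulabel}(n_\mu^{\ulabel}-1)\le 3n^2$, exactly as in the paragraph preceding Theorem~\ref{t:nacht}. So for $t\ge 3n^2$ the hypotheses of Theorem~\ref{pathology2}(2) are met for all $\mu$ at once, and~\eqref{e:nacht22} follows. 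Finally I should record that on the left side one has $\lambda_\mu=\lambda_\mu^{\ulabel}$ here, since the indices $\mu$ in play are precisely those of $\Sigma$ (those $\mu$ for which $\crit_\mu^{\ulabel}$ is nonempty), so writing $\lambda_\mu^t$ rather than $(\lambda_\mu^{\ulabel})^t$ is consistent.

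I do not anticipate a genuine obstacle: the proposition is essentially a restatement of Theorem~\ref{pathology2} under the extra hypothesis that $i$ or $j$ lies in $N_\mu^{\ulabel}$, which kills the gap between ``$\mu$-hard'' paths and ``all'' paths. The only point requiring a little care is~\eqref{e:heavymeaning} itself: one must check that a path in $\digr(A_\mu^{\ulabel})$ touching $N_\mu^{\ulabel}$ indeed has $\lambda(P)=\lambda_\mu^{\ulabel}$ and not a larger value --- this holds because $A_\mu^{\ulabel}$ has been truncated to $K_\mu^{\ulabel}=N\setminus\bigcup_{i<\mu}M_i^{\ulabel}$, so all components it meets have m.c.g.m.\ at most $\lambda_\mu^{\ulabel}$, while touching the critical nodes $N_\mu^{\ulabel}$ forces it to be at least $\lambda_\mu^{\ulabel}$. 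A proof would read: by~\eqref{e:heavymeaning}, $(A_\mu^{\ulabel})^t_{ij}=w(\Pi_{ij,t}^{\mu\ulabel})$; by Theorem~\ref{pathology2} and the bound $T_\mu^{\ulabel}+2\tau_\mu^{\ulabel}(n_\mu^{\ulabel}-1)\le 3n^2$ valid for all $\mu$, this equals $(\lambda_\mu^{\ulabel})^t(\ultim_\mu^{(t)})_{ij}=\lambda_\mu^t(\ultim_\mu^{(t)})_{ij}$ for $t\ge 3n^2$; done.
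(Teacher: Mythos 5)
Your argument is exactly the paper's: the paper derives Proposition~\ref{t:nacht2} in one line from~\eqref{e:heavymeaning} together with Theorem~\ref{pathology2}, using the same bound $T_{\mu}^{\ulabel}+2\tau_{\mu}^{\ulabel}(n_{\mu}^{\ulabel}-1)\leq 3n^2$ already noted before Theorem~\ref{t:nacht}. Your extra care in justifying~\eqref{e:heavymeaning} (both that paths of $\digr(A_{\mu}^{\ulabel})$ through $N_{\mu}^{\ulabel}$ are $\mu$-hard and, implicitly needed, that $\mu$-hard paths stay inside $K_{\mu}^{\ulabel}$) only fills in what the paper calls evident, so the proposal is correct and follows the same route.
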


%Note that $T'_{\ulabel}(A)$, defined in \eqref{t'ultim}, is of the
%order $O(n^2)$.

For the sequel we need to establish some relation between
connectivity on $\digr(A)$ and nonzero entries of
$\ultim_{\mu}^{(t)}$.

We denote by $\gamma^{\ulabel}$ the l.c.m. of all cyclicities
$\gamma_{\mu}^{\ulabel}$ of all components $\crit_{\mu}^{\ulabel}$.
Recall that we denote by $N_P$ the set of nodes visited by a path
$P$ and by $\lambda(P)$ the greatest m.c.g.m. of a component visited
by $P$.

\begin{proposition}
\label{paths-entries} Let $i,j\in N,$ $l\geq 0$ and $\mu\in\Sigma$.
The following are equivalent.
\begin{itemize}
\item[1.]  $(\ultim_{\mu}^{(l)})_{ij}\neq 0$;
\item[2.] For all $t\equiv l(\modd\;\gamma^{\ulabel})$ such that
$t\geq 3n^2$, there is a $\mu$-hard path of length $t$ connecting
$i$ to $j$;
\item[3.] For some $t\equiv l(\modd\;\gamma^{\ulabel})$ there exists a
$\mu$-hard path of length $t$ connecting $i$ to $j$;
\item[4.] For some $t\equiv l(\modd\;\gamma^{\ulabel})$ there exists a
path $P$ of length $t$ such that $\lambda(P)=\lambda_{\mu}$.
\end{itemize}
\end{proposition}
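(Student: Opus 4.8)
The plan is to prove the chain of implications $1\Rightarrow 2\Rightarrow 3\Rightarrow 4\Rightarrow 1$, exploiting Theorem~\ref{pathology2} for the passages between $\ultim_\mu^{(t)}$ and $\mu$-hard paths, and the periodicity $\ultim_\mu^{(t+\gamma^{\ulabel})}=\ultim_\mu^{(t)}$ (which holds since $\gamma_\mu^{\ulabel}$ divides $\gamma^{\ulabel}$) to move between a fixed residue class and large exponents. Throughout I may assume, after a diagonal scaling, that $A$ is totally $S$-visualized, so each $S_\mu^{\ulabel}$ is Boolean; the statement is scaling-invariant.

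First, $1\Rightarrow 2$. Assume $(\ultim_\mu^{(l)})_{ij}\neq 0$. By periodicity of $\ultim_\mu^{(t)}$ with period $\gamma_\mu^{\ulabel}\mid\gamma^{\ulabel}$, we get $(\ultim_\mu^{(t)})_{ij}\neq 0$ for every $t\equiv l\ (\modd\ \gamma^{\ulabel})$. Fix such a $t$ with $t\geq 3n^2$; then $t\geq T_\mu^{\ulabel}+2\tau_\mu^{\ulabel}(n_\mu^{\ulabel}-1)$, so by part~2 of Theorem~\ref{pathology2}, $w(\Pi_{ij,t}^{\mu\ulabel})\geq(\lambda_\mu^{\ulabel})^t(\ultim_\mu^{(t)})_{ij}>0$; hence $\Pi_{ij,t}^{\mu\ulabel}$ is nonempty, i.e.\ there is a $\mu$-hard path of length $t$ from $i$ to $j$. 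The implication $2\Rightarrow 3$ is immediate: if the property holds for all large $t$ in the residue class, it holds for some $t$ in the residue class. The implication $3\Rightarrow 4$ is also immediate, since a $\mu$-hard path is by definition a path $P$ with $\lambda(P)=\lambda_\mu$ (it additionally visits $N_\mu^{\ulabel}$, which we simply forget).

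The substantive step is $4\Rightarrow 1$, and this is where I expect the main obstacle. Suppose there is a path $P$ of length $t\equiv l\ (\modd\ \gamma^{\ulabel})$ with $\lambda(P)=\lambda_\mu$. The issue is that $P$ need not itself pass through $N_\mu^{\ulabel}$; it only passes through a component of $\digr(A)$ with m.c.g.m.\ $\lambda_\mu$, and $N_\mu^{\ulabel}$ consists of the critical nodes of such components. The remedy is standard: $P$ visits a node $v$ lying in a nontrivial component $\digr_v$ of $\digr(A)$ with $\lambda(\digr_v)=\lambda_\mu$. Inside $\digr_v$ there is a critical cycle, hence a critical node $w\in N_\mu^{\ulabel}$, and $v$ and $w$ are mutually reachable within $\digr_v$. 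Splice $P$ at $v$, insert a detour from $v$ to $w$, around a critical cycle at $w$ an appropriate number of times, and back from $v$ to $w$ along paths inside $\digr_v$; since $\lambda_\mu$ is attained on the critical cycle and $\geq$ the geometric mean of every cycle in $\digr_v$, one can choose the detour so that its total length makes the overall length congruent to $l$ modulo $\gamma^{\ulabel}$ (using that $\gamma^{\ulabel}$ is a l.c.m.\ of cyclicities and that long critical-cycle insertions adjust length by multiples of $\gamma_\mu^{\ulabel}$), while the resulting path $P'$ has $\lambda(P')=\lambda_\mu$ and passes through $w\in N_\mu^{\ulabel}$; moreover one arranges $l(P')\geq 3n^2$. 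Then $P'\in\Pi_{ij,l(P')}^{\mu\ulabel}$, so $w(\Pi_{ij,l(P')}^{\mu\ulabel})\neq 0$, and by part~1 of Theorem~\ref{pathology2}, $(\lambda_\mu^{\ulabel})^{l(P')}(\ultim_\mu^{(l(P'))})_{ij}\geq w(\Pi_{ij,l(P')}^{\mu\ulabel})>0$; hence $(\ultim_\mu^{(l(P'))})_{ij}\neq 0$, and by periodicity $(\ultim_\mu^{(l)})_{ij}\neq 0$. The delicate bookkeeping is precisely the simultaneous control of (a) the length modulo $\gamma^{\ulabel}$, (b) the size $l(P')\geq 3n^2$, and (c) the property $\lambda(P')=\lambda_\mu$ while reaching $N_\mu^{\ulabel}$; all three are arranged by inserting sufficiently many copies of a single critical cycle in $\digr_v$, whose length is a multiple of $\gamma_\mu^{\ulabel}\mid\gamma^{\ulabel}$, so that inserting $k\gamma^{\ulabel}/\gamma_\mu^{\ulabel}$ copies changes the length by a multiple of $\gamma^{\ulabel}$ and leaves the residue class untouched.
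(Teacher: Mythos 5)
Your overall architecture is the same as the paper's: the equivalences between condition 1 and the path conditions come from Theorem~\ref{pathology2} plus the periodicity $\ultim_{\mu}^{(t+\gamma^{\ulabel})}=\ultim_{\mu}^{(t)}$, the implications $2\Rightarrow 3\Rightarrow 4$ are trivial, and the only substantive step is getting from condition 4 back to a $\mu$-hard path in the right residue class by splicing cycles through a critical node. Your treatment of $1\Rightarrow 2$ and of the trivial implications is fine.

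The problem is in your length bookkeeping for $4\Rightarrow 1$. Your detour consists of a walk $v\to w\to v$ inside $\digr_v$, of some length $d$, plus $m$ copies of a critical cycle of length $c$ at $w$; you need $d+mc\equiv 0\ (\modd\ \gamma^{\ulabel})$. The residues $mc$ realize only the subgroup of $\mathbb{Z}_{\gamma^{\ulabel}}$ generated by $c$, i.e.\ the multiples of $\gcd(c,\gamma^{\ulabel})$, and nothing forces $-d$ to lie in that subgroup: $d$ is merely a multiple of the cyclicity of the \emph{component} $\digr_v$, which can be strictly smaller than the cyclicity of its critical subgraph (e.g.\ a noncritical loop makes $\digr_v$ primitive while the critical cycle has length $2$, so $d$ can be odd while $c=\gamma^{\ulabel}=2$). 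Your fallback claim that a critical cycle's length is a multiple of $\gamma_{\mu}^{\ulabel}$ is also false when $\crit_{\mu}^{\ulabel}$ has several components with different cyclicities, since $\gamma_{\mu}^{\ulabel}$ is their l.c.m.\ while a given cycle is only a multiple of its own component's cyclicity. So the insertion scheme as described can fail to hit the residue class.

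The fix is the device the paper uses and is simpler than what you propose: take a single cycle $Z$ of $\digr_v$ containing both $v$ (a node of $P$ in $M_{\mu}^{\ulabel}$) and the critical node $w$, and adjoin $k\gamma^{\ulabel}$ copies of $Z$ for $k$ large. The added length $k\gamma^{\ulabel}\,l(Z)$ is automatically a multiple of $\gamma^{\ulabel}$ whatever $l(Z)$ is, the new path visits $w\in N_{\mu}^{\ulabel}$, stays inside components of m.c.g.m.\ at most $\lambda_{\mu}$ so that $\lambda(P')=\lambda_{\mu}$, and can be made longer than $3n^2$. With that replacement your argument closes.
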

\begin{proof}
Implications 1.$\Leftrightarrow$2. and 3$\Leftrightarrow$1. follow
from Theorem \ref{pathology2} and the periodicity of
$\ultim_{\mu}^{(t)}$. Implications 2.$\Rightarrow$3.$\Rightarrow$4.
are evident. It remains to prove 4.$\Rightarrow$3. Let $k$ be a node
in $N_P$ which belongs to $M_{\mu}^{\ulabel}$, and let $l$ be a node
in $N_{\mu}^{\ulabel}$ (that is, a critical node) in the same
component of $\digr(A)$ as $k$. There exists a cycle containing both
$k$ and $l$. Adjoining $\gamma^{\ulabel}$ copies of this cycle to
$P$ we obtain a $\mu$-hard path, whose length is congruent to
$l(\modd\;\gamma^{\ulabel})$.
\end{proof}

Now we establish the ultimate expansion of matrix powers, as an
ultimate form of the canonical Nachtigall expansion. We will write
$a(t)\ulteq b(t)$ if $a(t)=b(t)$ for all $t\geq t'$ where $t'$ is an
unknown integer, and analogously for inequalities.

\begin{theorem}[Ultimate expansion]
\label{t:serg-sch} Let $A\in\Rpnn$. For all $\mu\in\Sigma$
\begin{equation}
\label{e:ultim} (A_{\mu}^{\ulabel})^t\ulteq
\bigoplus_{\nu\in\Sigma\colon\nu\geq\mu} \lambda^t_{\nu}
\ultim_{\nu}^{(t)}.
\end{equation}
In particular,
\begin{equation}
\label{e:ultima} A^t\ulteq \bigoplus_{\nu\in\Sigma} \lambda^t_{\nu}
\ultim_{\nu}^{(t)}.
\end{equation}
\end{theorem}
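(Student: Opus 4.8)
The plan is to prove \eqref{e:ultim} by a decreasing induction on $\mu\in\Sigma$ (equivalently, an increasing induction on $\lambda_\mu$), deriving \eqref{e:ultima} as the case $\mu=1$ since $1\in\Sigma$ and $A_1^{\ulabel}=A$. The backbone of the argument is the comparison of the ultimate expansion with the \emph{canonical} Nachtigall expansion of Theorem~\ref{t:nacht}, which already gives $A_\mu^t=\bigoplus_{\nu=\mu}^m\lambda_\nu^t\nacht_\nu^{(t)}$ for $t\geq 3n^2$; the task is to show that, ultimately in $t$, all Nachtigall terms $\nacht_\nu^{(t)}$ with $\nu\notin\Sigma$ are dominated (entrywise) by the surviving terms indexed by $\Sigma$, and that for $\nu\in\Sigma$ the term $\lambda_\nu^t\nacht_\nu^{(t)}$ can be replaced ultimately by $\lambda_\nu^t\ultim_\nu^{(t)}$.

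First I would fix an index $\nu\notin\Sigma$ and examine the term $\lambda_\nu^t\nacht_\nu^{(t)}$. By Theorem~\ref{pathology1}, for $t\geq 3n^2$ this term equals $w(\Pi_{ij,t}^\nu)$, i.e.\ it is realized by a $\nu$-heavy path $P$ through a node $m\in N_\nu$. Since $\nu\notin\Sigma$, we have $\lambda_\nu<\lambda(m)$: the node $m$ lies in some nontrivial component of $\digr(A)$ whose m.c.g.m.\ is strictly larger than $\lambda_\nu$ (this is exactly the content of the renumbering that defines $\Sigma$, together with Proposition~\ref{compsrel}). Hence one may adjoin to $P$ a large number of copies of a critical cycle of that component — gaining weight at a rate $\lambda(m)^{l}$ while only lengthening $P$ by $l$ — and conclude, by Proposition~\ref{paths-entries} applied to the index $\mu'\in\Sigma$ with $\lambda_{\mu'}=\lambda(m)$, that there is a $\mu'$-hard path from $i$ to $j$ of the same length whose weight exceeds $\lambda_\nu^t(\nacht_\nu^{(t)})_{ij}$ once $t$ is large. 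By Theorem~\ref{pathology2}(2) that weight is $\le(\lambda_{\mu'}^{\ulabel})^t(\ultim_{\mu'}^{(t)})_{ij}$, so the $\nu$-term is ultimately absorbed into a $\Sigma$-term. A uniform bound on the required $t$ can be extracted from the explicit constants $T_\mu+2\tau_\mu(n_\mu-1)\le 3n^2$ and the growth-rate gap $\min\{\lambda(m)/\lambda_\nu : \ldots\}>1$, but since the statement only asserts ``$\ulteq$'' this is not needed.

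Next, for $\nu\in\Sigma$, I would show $\lambda_\nu^t\nacht_\nu^{(t)}\ultleq\bigoplus_{\mu\in\Sigma,\mu\ge\nu}\lambda_\mu^t\ultim_\mu^{(t)}$ together with the reverse, so that $A_\nu^{\ulabel}{}^t\ulteq\bigoplus_{\mu\ge\nu,\mu\in\Sigma}\lambda_\mu^t\ultim_\mu^{(t)}$. For the ``$\le$'' direction one decomposes a $\nu$-heavy path realizing $\lambda_\nu^t(\nacht_\nu^{(t)})_{ij}$ at its first critical node $m\in N_\nu=N_\nu^{\ulabel}$: the initial and final pieces live in $A_\nu^{\ulabel}$, hence by Proposition~\ref{t:nacht2} (the case $i\in N_\nu^{\ulabel}$ or $j\in N_\nu^{\ulabel}$) they are governed by $\lambda_\nu^t\ultim_\nu^{(t)}$; splicing via the group law (Theorem~\ref{t:group}) and Corollary~\ref{c:lindep} controls the whole term. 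For the ``$\ge$'' direction one uses that $\ultim_\nu^{(t)}$ is built from $A_\nu^{\ulabel}$, which differs from $A_\nu$ only by retaining the \emph{whole} components meeting $\crit_\nu^{\ulabel}$ rather than just the critical nodes; any $\nu$-hard path on $A_\nu^{\ulabel}$ is in particular a path on $A$, and Theorem~\ref{pathology2}(1) gives the entrywise bound $\lambda_\nu^t(\ultim_\nu^{(t)})_{ij}\ge w(\Pi_{ij,t}^{\nu\ulabel})$, which for the remaining (non-$\Sigma$) part of $A^t$ is subsumed in the previous paragraph. Finally, an induction on $\mu$ downward: $A_\mu^{\ulabel}{}^t$ restricted to $K_\mu^{\ulabel}$ splits, by \eqref{e:nacht22} and the structure of $K_\mu^{\ulabel}$, into the $\ultim_\mu^{(t)}$-part plus $A_{\mu+1}^{\ulabel}{}^t$-type contributions, to which the inductive hypothesis \eqref{e:ultim} at level $\mu+1$ applies.

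\textbf{Main obstacle.} The delicate point is the ``$\ge$'' inequality in \eqref{e:ultim} — showing that the $\ultim_\mu^{(t)}$-terms are not \emph{over}counting, i.e.\ that each nonzero entry of $\lambda_\mu^t\ultim_\mu^{(t)}$ is genuinely attained by some path in $\digr(A)$ of length $t$, even at off-critical positions $i,j\notin N_\mu^{\ulabel}$. Proposition~\ref{paths-entries} is tailored precisely for this: it translates $(\ultim_\mu^{(l)})_{ij}\ne 0$ into the existence of a $\mu$-hard path, and hence (via Theorem~\ref{pathology2}) into a genuine lower bound on $a_{ij}^t$. The care needed is to check that the various length-parity bookkeeping — paths having lengths that are multiples of $\gamma_\mu$ versus $\gamma_\mu^{\ulabel}$ — is consistent, which is where Corollary~\ref{c:gammas} ($\gamma_\mu$ a multiple of $\gamma_\mu^{\ulabel}$) is used. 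Everything else is a matter of assembling the already-proven path lemmas and the group law.
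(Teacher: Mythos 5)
Your plan coincides with the paper's strategy in outline (compare with the canonical Nachtigall expansion and argue that every term not indexed by $\Sigma$, and every excess of $\nacht_\nu^{(t)}$ over $\ultim_\nu^{(t)}$, is ultimately absorbed by a heavier $\Sigma$-term), and your treatment of $\nu\notin\Sigma$ is essentially the paper's argument. But your handling of $\nu\in\Sigma$ has a genuine gap. First, the identity $N_\nu=N_\nu^{\ulabel}$ is false in general: by Proposition~\ref{compsrel} the components of $\crit_\nu^{\ulabel}$ are components of $\crit_\nu=\crit(A_\nu)$, but $\crit_\nu$ may contain additional critical cycles lying inside \emph{heavier} components of $\digr(A)$ (cycles of mean exactly $\lambda_\nu$ that survive after the heavier critical nodes were removed at earlier steps), so $N_\nu\supsetneq N_\nu^{\ulabel}$ is possible. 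Consequently a $\nu$-heavy path realizing $(\nacht_\nu^{(t)})_{ij}$ need not stay inside $\digr(A_\nu^{\ulabel})$ at all: it may pass through non-critical nodes of components with m.c.g.m.\ greater than $\lambda_\nu$, and then its initial and final pieces do \emph{not} ``live in $A_\nu^{\ulabel}$'', so the splicing via Proposition~\ref{t:nacht2}, the group law and Corollary~\ref{c:lindep} does not apply. This is exactly the case where $(\nacht_\nu^{(t)})_{ij}>(\ultim_\nu^{(t)})_{ij}$, i.e.\ the case your argument must handle.

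Worse, your intermediate claim $\lambda_\nu^t\nacht_\nu^{(t)}\ultleq\bigoplus_{\mu\in\Sigma,\,\mu\ge\nu}\lambda_\mu^t\ultim_\mu^{(t)}$ is simply false, because the excess can only be dominated by terms with $\lambda_\mu>\lambda_\nu$, i.e.\ $\mu<\nu$, which your split excludes. A concrete instance (max-plus): take one strongly connected component on nodes $1,2,3$ with critical cycle $1\leftrightarrow 2$ of mean $0$, a loop of weight $-1$ at node $3$, connecting edges $1\to 3$, $3\to 1$ of weight $-10$, and a separate node $4$ with loop $-1$. Then $\lambda_2=-1$, $N_2=\{3,4\}$ but $N_2^{\ulabel}=\{4\}$, and $\lambda_2^t(\nacht_2^{(t)})_{33}=-t$ while every term $\lambda_\mu^t\ultim_\mu^{(t)}$ with $\mu\ge 2$ vanishes at $(3,3)$; the entry is ultimately absorbed only by $\lambda_1^t(\ultim_1^{(t)})_{33}$. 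The correct resolution, which is the paper's, is a single entrywise dichotomy applied to \emph{all} $\mu$: if $(\nacht_\mu^{(t)})_{ij}>(\ultim_\mu^{(t)})_{ij}$, then the $\mu$-heavy paths realizing this entry (Theorem~\ref{pathology1}) cannot be $\mu$-hard --- otherwise Theorem~\ref{pathology2} together with Corollary~\ref{c:gammas} would force $(\ultim_\mu^{(t)})_{ij}\ge(\nacht_\mu^{(t)})_{ij}$ --- hence $\lambda(P)>\lambda_\mu$, and Proposition~\ref{paths-entries} gives $(\ultim_{\nu'}^{(t)})_{ij}\neq 0$ for some $\nu'\in\Sigma$ with $\lambda_{\nu'}>\lambda_\mu$, which ultimately dominates by the growth-rate gap. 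Note also that your ``main obstacle'' is misplaced: the lower bound $A^t\ge\bigoplus_{\nu\in\Sigma}\lambda_\nu^t\ultim_\nu^{(t)}$ for $t\ge 3n^2$ is immediate from $\ultim_\mu^{(t)}\le\nacht_\mu^{(t)}$ (every $\mu$-hard path is $\mu$-heavy) and Theorem~\ref{t:nacht}; the delicate direction is the upper bound, precisely where your $\nu\in\Sigma$ argument breaks down.
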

\begin{proof}
It suffices to prove \eqref{e:ultima}. First note that
$\ultim_{\mu}^{(t)}\leq\nacht_{\mu}^{(t)}$ for all $\mu\in\Sigma$,
since any $\mu$-hard path is a $\mu$-heavy path. As $A^t\ulteq
\bigoplus_{\mu}\lambda_{\mu}^t\nacht_{\mu}^{(t)}$ by the canonical
Nachtigall expansion, it suffices to prove that
\begin{equation}
\label{nacultineq}
\bigoplus_{\mu=1}^m\lambda_{\mu}^t\nacht_{\mu}^{(t)}\ultleq\bigoplus_{\mu\in\Sigma}
\lambda_{\mu}^t\ultim_{\mu}^{(t)}
\end{equation}
For all $\mu$, $i$ and $j$ such that $(\nacht_{\mu}^{(t)})_{ij}\neq
0$, we will show that either $\mu\in\Sigma$ and
$(\ultim_{\mu}^{(t)})_{ij}=(\nacht_{\mu}^{(t)})_{ij}$, or there
exists $\nu\in\Sigma$ such that $\lambda_{\nu}>\lambda_{\mu}$ and
$(\ultim_{\nu}^{(t)})_{ij}\neq 0$.

Assume that either $\mu\notin\Sigma$, or $\mu\in\Sigma$ but
$(\nacht_{\mu}^{(t)})_{ij}>(\ultim_{\mu}^{(t)})_{ij}$. Theorem
\ref{pathology1} implies that for all $l$ such that $l\geq 3n^2$ and
$l\equiv t(\modd\;\gamma_{\mu})$ there exist paths
$P\in\Pi_{ij,l}^{\mu}$ such that
$w(P)=\lambda_{\mu}^l(\nacht_{\mu}^{(t)})_{ij}$. We are going to show 
that these paths are not $\mu$-hard. If $\mu\notin\Sigma$ then this is immediate.
If $\mu\in\Sigma$
then by Corollary~\ref{c:gammas} $\gamma_{\mu}$ is a multiple of
$\gamma_{\mu}^{\ulabel}$, and hence $l\equiv
t(\modd\;\gamma_{\mu}^{\ulabel})$.  If $P$ is $\mu$-hard,
then %$\mu\in\Sigma$ and 
$w(P)\leq\lambda_{\mu}^l
(\ultim_{\mu}^{(t)})_{ij}$ by Theorem \ref{pathology2}, which
implies $(\ultim_{\mu}^{(t)})_{ij}\geq(\nacht_{\mu}^{(t)})_{ij}$
contradicting our assumptions. Hence $P$ are not $\mu$-hard, meaning that for any 
such path there exists
$\nu\in\Sigma$ such that $\lambda_{\nu}=\lambda(P)>\lambda_{\mu}$.
Applying Proposition \ref{paths-entries}, we obtain that
$(\ultim_{\nu}^{(t)})_{ij}\neq 0$ with
$\lambda_{\nu}>\lambda_{\mu}$. The claim is proved.
\end{proof}

If $A$ is irreducible, then the ultimate expansion has only one
term, which corresponds to its critical graph $\crit(A)$. In
general, it has several terms (up to $n$) corresponding to the
critical graphs of the components of $\digr(A)$ (or possibly clusters
of critical graphs of the components with the same m.c.g.m.) Thus, Theorem
\ref{t:serg-sch} can be regarded as a generalization of the
Cyclicity Theorem, see \cite{HOW:05} Theorem 3.9 or \cite{BCOQ}
Theorem 3.108, which it implies as a special irreducible case.

\section{Computational complexity}
\label{s:comp}

Given $A\in\Rpnn$,
we investigate the computational complexity of the following problems.\\
(P1) For given $t$, reconstruct all terms $\nacht_{\mu}^{(t)}$ of a
Nachtigall expansion with a prescribed selection rule for
$\crit_{\mu}$.\\
(P2) For given $t$: $0\leq t<\gamma$, reconstruct all terms
$\ultim_{\mu}^{(t)}$ of the ultimate expansion.

In (P1) we assume that selecting the subdigraph $\crit_{\mu}$ of
$\crit(A_{\mu})$ does not take more than $O(n^3)$ operations. This
holds in particular if $\crit_{\mu}$ is an arbitrary cycle of
$\crit(A_{\mu})$ as in \cite{Mol-03,Nacht}.

Problem (P1) is close to the problem considered by Moln\'{a}rov\'{a}
\cite{Mol-03}, and Problem (P2) is extension of a problem regarded
by Sergeev \cite{Ser-09}. An $O(n^4\log n)$ solution of these
problems is given below. It is based on visualisation, square
multiplication and permutation of cyclic classes. See
Seman\v{c}\'{\i}kov\'a~\cite{Sem-06,Sem-07} for closely related
studies in max-min algebra.

\begin{theorem}
\label{t:compcomp} For any $A\in\Rpnn$, problems (P1) and (P2) can
be solved in $O(n^4\log n)$ operations.
\end{theorem}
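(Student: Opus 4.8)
The plan is to turn the inductive definitions of the two expansions into an algorithm and to bound the cost of each step; the sole genuine obstacle is that the exponent $t$ has no polynomial bound, and it is overcome by combining the periodicity established in Sections~\ref{s:projector}--\ref{s:ultexp} with repeated (``square'') matrix multiplication. I describe the algorithm for (P1); (P2) is treated the same way, with the construction of Section~\ref{s:ultexp} replacing that of Section~\ref{s:nachtexp}.

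First I would run the Nachtigall construction, obtaining for $\mu=1,\dots,m$ (with $m\le n$) the matrices $A_\mu$, the values $\lambda_\mu$, the critical digraphs $\crit(A_\mu)$, the prescribed subdigraphs $\crit_\mu=(N_\mu,E_\mu)$ together with their strongly connected components, cyclicities, cyclic classes and $\gamma_\mu$. A maximum cycle geometric mean and a critical digraph are computed in $O(n^3)$, selecting $\crit_\mu$ costs $O(n^3)$ by hypothesis, cyclic classes and cyclicities cost $O(n^2)$ by Balcer--Veinott condensation, and each visualisation vector $z^\mu=\bigoplus_j(S_\mu)^*_{\cdot j}$ of Proposition~\ref{p:s0-1} is one Kleene star; as $m\le n$ this whole phase is $O(n^4)$. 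Assembling $z$ by \eqref{fdef} and passing to $\widetilde A=D^{-1}AD$ with $D=\diag(z)$ leaves $\lambda_\mu$, $\crit(A_\mu)$, $N_\mu$, $E_\mu$ and, up to the same scaling $D$, the $CSR$ products unchanged, while making each $\widetilde S_\mu$ a $0$--$1$ matrix.

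For each $\mu$ I then need $C_\mu,R_\mu$ (the columns, resp.\ rows, indexed by $N_\mu$ of $B_\mu=((\widetilde A_\mu/\lambda_\mu)^{\gamma_\mu})^*$) and the Boolean $\widetilde S_\mu$, the latter read off $\widetilde A_\mu$ directly. Crucially, one should \emph{not} raise $\widetilde A_\mu/\lambda_\mu$ to the power $\gamma_\mu$, which may be superpolynomial: for a node $p$ in a component $G$ of $\crit_\mu$ of cyclicity $\pi_G$, the path interpretation of the Kleene star shows that the column $(B_\mu)_{\cdot p}$ coincides with the corresponding column of $((\widetilde A_\mu/\lambda_\mu)^{\pi_G})^*$, since any path into the critical node $p$ whose length is a multiple of $\pi_G$ can be padded by critical cycles through $p$, each of weight $1$, to a length that is a multiple of $\gamma_\mu$; and symmetrically for rows. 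Hence $C_\mu,R_\mu$ are read off the Kleene stars $((\widetilde A_\mu/\lambda_\mu)^{\pi_G})^*$, one per component $G$, each formed by $O(\log\pi_G)=O(\log n)$ squarings and one $O(n^3)$ Kleene star; as there are at most $n$ such components over all stages, this costs $O(n^4\log n)$. It remains to form $\widetilde S_\mu^{\,t}$: as $\widetilde S_\mu$ is $0$--$1$ with $\digr(\widetilde S_\mu)=\crit_\mu$ a disjoint union of the $G$'s, for $t$ beyond the Wielandt numbers $(|G|-1)^2+1\le n^2$ the matrix $\widetilde S_\mu^{\,t}$ is given by \eqref{e:sult}, namely on each $G$ it is the permutation of cyclic classes fixed by $t\bmod\pi_G$, assembled in $O(n^2)$, while for $t<n^2$ it is obtained by $O(\log n)$ squarings. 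Then $\widetilde\nacht_\mu^{(t)}=\widetilde C_\mu\widetilde S_\mu^{\,t}\widetilde R_\mu$ in $O(n^3)$ and $\nacht_\mu^{(t)}=D\,\widetilde\nacht_\mu^{(t)}D^{-1}$, so summing over $\mu$ gives $O(n^4\log n)$. For (P2) nothing changes except that the construction contracts along the strongly connected components of $\digr(A)$ (found in $O(n^2)$, their maximum cycle geometric means in $O(n^3)$ overall), $\crit_\mu^{\ulabel}=\crit(A_\mu^{\ulabel})$, and $0\le t<\gamma$ is given; by Proposition~\ref{p:period} the sequence $\{\ultim_\mu^{(t)}\}$ is $\gamma_\mu^{\ulabel}$-periodic already from $t=0$, so $(\widetilde S_\mu^{\ulabel})^{t}$ is again determined componentwise by the small residues $t\bmod\pi_G$ (with the $O(\log n)$-squaring fallback below the Wielandt numbers), and the same count yields $O(n^4\log n)$.

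The one point that is not routine bookkeeping is the unboundedness of $t$: a power $S_\mu^{\,t}$ can be formed neither by $t$ multiplications nor by reduction modulo $\gamma_\mu$ followed by squaring, since $\gamma_\mu$ itself may be superpolynomial. The resolution is exactly the interplay between visualisation --- which by Proposition~\ref{p:s0-1} makes $S_\mu$ Boolean, keeping the pre-periodic part of $\{S_\mu^{\,t}\}$ below the polynomial Wielandt bound and turning its periodic part into a componentwise cyclic-class permutation depending only on the residues $t\bmod\pi_G$ with $\pi_G\le n$ --- and the $\gamma$-periodicity from $t=0$ of the $CSR$ products (Proposition~\ref{p:period}); the lone logarithmic factor is the cost of the residual repeated squarings.
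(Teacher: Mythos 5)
Your proposal is correct and reaches the stated bound, but it obtains the factors $C_\mu$ and $R_\mu$ by a genuinely different route from the paper. The paper never forms $B_\mu=((A_\mu/\lambda_\mu)^{\gamma_\mu})^*$ or any surrogate Kleene star for it: after the same $O(n^4)$ preprocessing and visualization, it squares $A_\mu$ itself up to an exponent $r\ge 3n^2$, uses Theorem~\ref{t:nacht} to see that the rows and columns of $A_\mu^r$ indexed by $N_\mu$ come only from the term $\lambda_\mu^r\nacht_\mu^{(r)}$, reads off $C_\mu S_\mu^r$ and $S_\mu^r R_\mu$ from those rows and columns via Corollary~\ref{c:per}, and then converts from the exponent $r$ to the prescribed $t$ by the cyclic-class rotation of Theorem~\ref{t:rotate} applied to these products (so it never forms $S_\mu^t$ explicitly either). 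You instead compute, for each component $G$ of $\crit_\mu$ with cyclicity $\pi_G$, the star $((A_\mu/\lambda_\mu)^{\pi_G})^*$ by $O(\log n)$ squarings plus one Floyd--Warshall, and justify extracting from it the columns and rows of $B_\mu$ indexed by $G$ via a padding argument (appending weight-one critical closed walks through $p\in G$ to turn a length that is a multiple of $\pi_G$ into a multiple of $\gamma_\mu$). That auxiliary claim is true --- it is essentially the argument inside part~1 of Theorem~\ref{pathology0}, using that the g.c.d.\ of lengths of closed walks through a fixed node of $G$ is $\pi_G$ and that every cycle of $\crit_\mu$ has normalized weight one --- and since the sets $N_\mu$ are disjoint there are at most $n$ components in total, so your $O(n^4\log n)$ count stands for both (P1) and (P2). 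The paper's route buys a proof that needs no correctness lemma beyond results already stated (Theorem~\ref{t:nacht}, Corollary~\ref{c:per}, Theorem~\ref{t:rotate}) and one uniform squaring pass per $\mu$; yours buys a more direct construction of $C_\mu,R_\mu$ exactly as defined in \eqref{CRSdef}, at the price of the extra padding lemma and an explicit assembly of $S_\mu^t$ from \eqref{e:sult} (with the squaring fallback and, for (P2), Proposition~\ref{p:period} to move a given $t<\gamma$ into the periodic regime), all of which is sound.
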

\begin{proof}
(P1): First we need to compute $\lambda_{\mu}$, $A_{\mu}$ and
$\crit_{\mu}$ for all $\mu$. At each step the computation requires
no more than $O(n^3)$ operations, based on Karp and Floyd-Warshall
methods applied to each component of $\digr(A_{\mu})$. The total
complexity is no more than $O(n^4)$. After this, we find all cyclic
classes in each $\crit_{\mu}$, which has total complexity $O(n^2)$,
and hence the cyclicities $\gamma_{\sigma}$ of all components of the
graphs $\crit_{\mu}$. At this stage we can also find a scaling which
leads to a total $S$-visualization of $A$ (and hence all $A_{\mu}$).
This relies on Floyd-Warshall method applied to each $S_{\mu}$ and
takes no more than $O(n^3)$ operations in total.

By Theorem~\ref{t:nacht}, $A_{\mu}^t$ admit Nachtigall expansion for
all $\mu$ and all $t\geq 3n^2$. The rows and columns of $A_{\mu}^t$
with indices in $N_{\mu}$ are determined at $t\geq 3n^2$ only by
$\nacht_{\mu}^t=C_{\mu}S_{\mu}^tR_{\mu}$, since by construction
these rows and columns are zero in all terms $\nacht_{\nu}^{(t)}$
for $\nu>\mu$. This means in particular that these rows and columns
become periodic after $3n^2$ time. By repeated squaring $A_{\mu},
A_{\mu}^2, A_{\mu}^4,\ldots,$ we reach a power $A_{\mu}^r$ with
$r\geq 3n^2$, which requires no more than $O(n^3\log n)$ operations.
Now we can use Corollary \ref{c:per} identifying $C_{\mu}S_{\mu}^r$
and $S_{\mu}^rR_{\mu}$ as submatrices extracted from columns, resp.
rows, of $A_{\mu}^t$ with indices in $N_{\mu}$. By Theorem
\ref{t:rotate} we can obtain $S_{\mu}^tR_{\mu}$ from
$S_{\mu}^rR_{\mu}$ and $C_{\mu}$ from $C_{\mu}S_{\mu}^r$ by the
permutation on cyclic classes determined by the remainders
$r(\modd\;\gamma_{\sigma})$ and $(t-r)(\modd\;\gamma_{\sigma})$, for
each cyclicity $\gamma_{\sigma}$ of a component of $\crit_{\mu}$.
This takes $O(n^2)$ overrides. Finally we compute
$\nacht_{\mu}^t=C_{\mu}S_{\mu}^t R_{\mu}$ ($O(n^3)$ operations). We
conclude that the total complexity for all $\mu$ does not exceed
$O(n^4\log n)$ operations.

(P2): It is clear that the computation of all prerequisites for the
ultimate expansion is done like in the first para of the proof of
(P1), and takes no more than $O(n^4)$ operations. After that we use
Proposition~\ref{t:nacht2} which means that the critical rows and
columns in each $(A_{\mu}^{\ulabel})^r$ for $r\geq 3n^2$ are
determined only by $\ultim_{\mu}^{(r)}$. Hence the factors of each
$\ultim_{\mu}^{(t)}$ can be computed by matrix squaring of
$A_{\mu}^{\ulabel}$, followed by a permutation on cyclic classes and
matrix multiplication, which overall takes no more than $O(n^3\log
n)$ operations. We conclude that the total complexity for all $\mu$
does not exceed $O(n^4\log n)$ operations.
\end{proof}

\section{Orbit periodic matrices}
\label{s:totalper} Being motivated by the results of
Butkovi\v{c}~et~al.~\cite{BCG} on robust matrices, we are going to
derive necessary and sufficient conditions for orbit periodicity and
to show that they can be verified in polynomial time.

A matrix $A\in\Rpnn$ is called {\em orbit periodic} if for each
$y\in\Rpn$ there exists $\lambda(y)\in\Rpn$ such that
$A^{t+\gamma^{\ulabel}}y=(\lambda(y))^{\gamma^{\ulabel}}A^ty$ for
all sufficiently large $t$, where (as above)$\gamma^{\ulabel}$ is
the joint cyclicity (l.c.m.) of the critical graphs of all
components of $\digr(A)$.

A sequence $\{A^ty,\;t\geq 0\}$ with the above property will be
called {\em ultimately linear periodic} and $\lambda(y)$ will be
called its {\em ultimate growth rate}. The same wording will be used
for the sequences $\{a_{ij}^t,\;t\geq 0$. We say that a subsequence
$\{a_{ij}^{l+\gamma^{\ulabel}s},\; s\geq 0\}$ has ultimate growth
rate $\lambda$, if there exists $\alpha_{ij}\neq 0$ such that
$a_{ij}^{l+\gamma^{\ulabel}s}=\alpha_{ij}\lambda^{l+\gamma^{\ulabel}s}$
for all $s$ starting from a sufficiently large number.

It may seem more general if in the above definition of linear
periodicity we replace $\gamma^{\ulabel}$ by $\gamma(y)$. But using
the ultimate expansion \eqref{e:ultima} we conclude that
$\{A^ty,\;t\geq 0\}$ is ultimately linear periodic if and only if
there exists $\mu\in\Sigma$ such that
$A^ty\ulteq\lambda_{\mu}^t\ultim_{\mu}^{(t)}y$. As
$\ultim_{\mu}^{(t+\gamma^{\ulabel})}=\ultim_{\mu}^{(t)}$ for all
$\mu$ and $t$, we conclude that the exact period of $\{A^ty,\;t\geq
0\}$ has to divide $\gamma^{\ulabel}$.

The ultimate expansion leads to the following properties of the
sequences $\{a_{ij}^t,\; t\geq 0\}$, already known in max algebra
\cite{BdS,Gav:04,Mol-05}. As above, $\lambda(P)$ is the largest
m.c.g.m. of the components of $\digr(A)$ visited by $P$, and
$\Pi_{ij,t}$ denotes the set of paths of length $t$ connecting $i$
to $j$.

\begin{lemma}
\label{l:genper} For each $l\colon 0\leq l<\gamma^{\ulabel}$, the
subsequence $\{a_{ij}^{l+\gamma^{\ulabel}s},\; s\geq 0\}$ is
ultimately zero or has an ultimate growth rate $\lambda_{ij}(l)$. In
the latter case, for each such $l$ and each sufficiently large
$t\equiv l(\modd\;\gamma^{\ulabel})$ there exists $P\in\Pi_{ij,t}$
such that $\lambda(P)=\lambda_{ij}(l)$.
\end{lemma}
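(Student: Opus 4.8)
The plan is to read off the statement directly from the ultimate expansion \eqref{e:ultima} together with Proposition~\ref{paths-entries}. By Theorem~\ref{t:serg-sch} there is an integer $t'$ such that for all $t\geq t'$ we have $A^t=\bigoplus_{\nu\in\Sigma}\lambda_\nu^t\ultim_\nu^{(t)}$ entrywise; moreover each sequence $\{\ultim_\nu^{(t)},\,t\geq 0\}$ has period $\gamma_\nu^{\ulabel}$, which divides $\gamma^{\ulabel}$, so in particular $\ultim_\nu^{(t)}$ depends on $t$ only through $t\,(\modd\;\gamma^{\ulabel})$. Fix $l$ with $0\leq l<\gamma^{\ulabel}$. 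For $t\equiv l\,(\modd\;\gamma^{\ulabel})$ and $t\geq t'$ we then get
\begin{equation}
a_{ij}^t=\bigoplus_{\nu\in\Sigma}\lambda_\nu^t(\ultim_\nu^{(l)})_{ij}.
\end{equation}
Only those $\nu$ with $(\ultim_\nu^{(l)})_{ij}\neq 0$ contribute. If there are no such $\nu$, the subsequence is ultimately zero. Otherwise, let $\nu_0$ be the index among these contributing $\nu$ with $\lambda_{\nu_0}$ maximal (if several $\nu\in\Sigma$ had the same value of $\lambda_\nu$, they would have been merged in the construction, so the maximizer is unique). For $t$ large enough the term $\lambda_{\nu_0}^t(\ultim_{\nu_0}^{(l)})_{ij}$ strictly dominates all the others, because $\lambda_{\nu_0}>\lambda_\nu$ for every other contributing $\nu$ and the constants $(\ultim_\nu^{(l)})_{ij}$ are fixed; hence $a_{ij}^{l+\gamma^{\ulabel}s}=(\ultim_{\nu_0}^{(l)})_{ij}\,\lambda_{\nu_0}^{l+\gamma^{\ulabel}s}$ for all $s$ past some threshold. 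This gives the ultimate growth rate $\lambda_{ij}(l)=\lambda_{\nu_0}$ with constant $\alpha_{ij}=(\ultim_{\nu_0}^{(l)})_{ij}$.

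For the second claim, fix such an $l$ with $\lambda_{ij}(l)=\lambda_{\nu_0}$, so $(\ultim_{\nu_0}^{(l)})_{ij}\neq 0$ and $\nu_0\in\Sigma$. By the equivalence 1.$\Leftrightarrow$2. of Proposition~\ref{paths-entries}, for every $t\equiv l\,(\modd\;\gamma^{\ulabel})$ with $t\geq 3n^2$ there is a $\nu_0$-hard path of length $t$ connecting $i$ to $j$; and a $\nu_0$-hard path $P$ satisfies $\lambda(P)=\lambda_{\nu_0}=\lambda_{ij}(l)$ by definition. Taking $t$ large enough (at least $\max(3n^2,t')$) then yields the required $P\in\Pi_{ij,t}$ with $\lambda(P)=\lambda_{ij}(l)$.

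The only point needing care — and the main obstacle, such as it is — is the uniqueness of the dominating index $\nu_0$ and the claim that its term eventually wins. Distinct elements of $\Sigma$ have distinct $\lambda_\nu$ by the renumbering convention introduced after Proposition~\ref{compsrel} ($\lambda_\mu=\lambda_\mu^{\ulabel}$ exactly on $\mu\in\Sigma$, and the $\lambda_\mu^{\ulabel}$ range over the distinct m.c.g.m.'s of the components of $\digr(A)$), so among the finitely many contributing indices the largest $\lambda_\nu$ is attained once; since all the coefficients are fixed positive reals and the ratio of the leading term to any other grows geometrically, domination kicks in after a finite number of steps. Everything else is just unwinding definitions and invoking the stated results.
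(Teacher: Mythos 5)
Your proposal is correct and follows essentially the same route as the paper: read off the dominant term of the ultimate expansion \eqref{e:ultima} for each residue class $l$ (using that the $\lambda_\nu$, $\nu\in\Sigma$, are pairwise distinct so a unique term eventually wins), and then invoke Proposition~\ref{paths-entries} for the existence of a path $P$ with $\lambda(P)=\lambda_{ij}(l)$. The paper's proof is just a terser version of the same argument, so no further comment is needed.
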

\begin{proof}
The ultimate expansion~\eqref{e:ultima} implies that for each $l$
there exists $\mu\in\Sigma$ such that $a_{ij}^{l+\gamma^{\ulabel}s}=
\lambda_{\mu}^{l+\gamma^{\ulabel}s}(\ultim_{\mu}^{(l)})_{ij}$ at
sufficiently large $s$, so the subsequence has growth rate
$\lambda_{ij}(l):=\lambda_{\mu}$. The second part of the statement
follows from Proposition \ref{paths-entries}.
\end{proof}

\begin{lemma}
\label{l:paths}
 If $P\in\Pi_{ij,l}$ then
 $\{a_{ij}^{l+\gamma^{\ulabel}s},\; s\geq 0\}$
 has ultimate growth rate at least $\lambda(P)$.
\end{lemma}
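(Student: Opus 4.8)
The plan is to leverage the ``path sense'' of the $CSR$ products, which here comes from Theorems~\ref{pathology1} and~\ref{pathology2} together with Proposition~\ref{paths-entries}. Suppose $P\in\Pi_{ij,l}$ and set $\lambda_0:=\lambda(P)$. If $\lambda_0=0$ there is nothing to prove, so assume $\lambda_0>0$; then $\lambda_0=\lambda_{\mu}$ for some $\mu\in\Sigma$, namely the index of the component with maximal m.c.g.m.\ visited by $P$. The key idea is to augment $P$ with critical cycles of that component in order to produce, for each sufficiently large $t\equiv l\ (\modd\ \gamma^{\ulabel})$, a path of length $t$ from $i$ to $j$ whose weight grows like $\lambda_{\mu}^t$.

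First I would locate a node $k\in N_P$ lying in the component $D_0$ of $\digr(A)$ with $\lambda(D_0)=\lambda_{\mu}$, and a critical node $l'\in N_{\mu}^{\ulabel}$ in the same component $D_0$. Since $D_0$ is strongly connected, there is a cycle $Z$ through $k$ containing a critical cycle of $D_0$; by adjusting how many times we wind around the critical part we can produce, from $P$, paths $P_s$ of length $l+\gamma^{\ulabel}s$ (for all $s$ large enough) connecting $i$ to $j$ and visiting $l'$, whose weight is $w(P)\cdot(\text{weight of the added cycles})$. Because the added cycles can be taken to lie on the critical graph of $A_{\mu}^{\ulabel}$ after scaling by $\lambda_{\mu}$, their contribution is exactly $\lambda_{\mu}^{\gamma^{\ulabel}s}$ times a fixed positive constant. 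Hence $a_{ij}^{l+\gamma^{\ulabel}s}\geq w(P_s)\geq c\,\lambda_{\mu}^{l+\gamma^{\ulabel}s}$ for some $c>0$ and all large $s$. Combined with Lemma~\ref{l:genper}, which tells us the subsequence $\{a_{ij}^{l+\gamma^{\ulabel}s}\}$ either is ultimately zero or has a well-defined ultimate growth rate $\lambda_{ij}(l)$, this lower bound forces $\lambda_{ij}(l)\geq\lambda_{\mu}=\lambda(P)$, which is the claim.

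Alternatively, and perhaps more cleanly, one can argue through the ultimate expansion directly: the path $P_s$ just constructed is $\mu$-hard (it visits a node of $N_{\mu}^{\ulabel}$ and has $\lambda(P_s)=\lambda_{\mu}$), so $w(\Pi_{ij,t}^{\mu\ulabel})\neq 0$ for $t=l+\gamma^{\ulabel}s$, and by Proposition~\ref{paths-entries} (equivalence of items 1.\ and 3.) we get $(\ultim_{\mu}^{(l)})_{ij}\neq 0$. The ultimate expansion~\eqref{e:ultima} then shows $a_{ij}^{l+\gamma^{\ulabel}s}\geq\lambda_{\mu}^{l+\gamma^{\ulabel}s}(\ultim_{\mu}^{(l)})_{ij}$ for all large $s$, and again $\lambda_{ij}(l)\geq\lambda_{\mu}=\lambda(P)$.

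The main obstacle is the cycle-insertion bookkeeping: one must verify that a path through $k$ visiting a critical node $l'$ of the right component, of length congruent to $l$ modulo $\gamma^{\ulabel}$, exists for \emph{every} sufficiently large residue representative $t\equiv l\ (\modd\ \gamma^{\ulabel})$, not merely for some. This is exactly the content built into the $4.\Rightarrow 3.$ step of Proposition~\ref{paths-entries} (adjoining $\gamma^{\ulabel}$ copies of a cycle through both $k$ and $l'$), so invoking that proposition is what makes the argument short. Once that is granted, the rest is immediate from the ultimate expansion.
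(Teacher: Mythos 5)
Your proposal is correct and in essence coincides with the paper's proof: the "alternative" route you describe is exactly what the paper does, namely apply Proposition~\ref{paths-entries} (condition 4, with the given path $P$ and $t=l$) to get $(\ultim_{\mu}^{(l)})_{ij}\neq 0$ for $\lambda_{\mu}=\lambda(P)$, and then read off the lower bound $a_{ij}^{l+\gamma^{\ulabel}s}\geq(\lambda(P))^{l+\gamma^{\ulabel}s}(\ultim_{\mu}^{(l)})_{ij}$ from the ultimate expansion. Your first paragraph's explicit cycle-insertion is redundant, since, as you yourself note, that bookkeeping is already contained in the $4.\Rightarrow 3.$ step of Proposition~\ref{paths-entries}.
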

\begin{proof}
Using Proposition \ref{paths-entries} cond. 4, we obtain that
($\ultim_{\mu}^{(l)})_{ij}\neq 0$ for $\mu$ such that
$\lambda_{\mu}=\lambda(P)$, and then
$a_{ij}^{l+\gamma^{\ulabel}s}\geq
(\lambda(P))^{l+\gamma^{\ulabel}s}(\ultim_{\mu}^{(l)})_{ij}$ at
sufficiently large $s$.
\end{proof}

Denote by $L^{\ulabel}$ the set of nodes in the nontrivial
components of $\digr(A)$. The next statement follows from the
Cyclicity Theorem \cite{BCOQ,HOW:05}. For the sake of completeness
we deduce it from the ultimate expansion.

\begin{lemma}
\label{l:samecomp}
 If $i,j\in L^{\ulabel}$ belong to the same component of $\digr(A)$, then
 $\{a_{ij}^t,\;t\geq 0\}$ is
 ultimately linear periodic and its growth rate is the m.c.g.m. of that
 component. Further, for each $i\in L^{\ulabel}$ and all $t$
 there exist $k$ and $l$ in the same
 component of $\digr(A)$ such that $a_{ik}^t\neq 0$ and
 $a_{li}^t\neq 0$.
\end{lemma}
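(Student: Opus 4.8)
The plan is to deduce both assertions from the ultimate expansion, using Lemma~\ref{l:genper} and Lemma~\ref{l:paths} together with one elementary remark about where paths can go.

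For the first assertion, let $\cT$ denote the (necessarily nontrivial) strongly connected component of $\digr(A)$ that contains $i$ and $j$, and write $\lambda(\cT)$ for its maximum cycle geometric mean. The remark I would isolate is that \emph{every} path of $\digr(A)$ from $i$ to $j$ lies entirely inside $\cT$: if such a path $P$ visited a node $v\notin\cT$, then concatenating the $i\to v$ prefix of $P$, the $v\to j$ suffix of $P$, and a $j\to i$ path inside $\cT$ would produce both an $i\to v$ walk and a $v\to i$ walk, forcing $v$ into the component of $i$, a contradiction. Hence $\lambda(P)=\lambda(\cT)$ for every path $P$ from $i$ to $j$. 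Now fix $l$ with $0\le l<\gamma^{\ulabel}$. By Lemma~\ref{l:genper} the subsequence $\{a_{ij}^{l+\gamma^{\ulabel}s}\colon s\ge 0\}$ is either ultimately zero or has an ultimate growth rate $\lambda_{ij}(l)$ realised, for large $t$ in the residue class of $l$, by some $P\in\Pi_{ij,t}$; in the latter case $\lambda_{ij}(l)=\lambda(P)=\lambda(\cT)$. To see that not every residue class is ultimately zero, take any path $Q$ from $i$ to $j$ inside $\cT$ (one exists since $\cT$ is strongly connected) and apply Lemma~\ref{l:paths} to $Q$: the subsequence indexed by the residue class of $l(Q)$ modulo $\gamma^{\ulabel}$ then has ultimate growth rate at least $\lambda(Q)=\lambda(\cT)>0$, so it is not ultimately zero. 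Combining these, $\{a_{ij}^t\colon t\ge 0\}$ is ultimately linear periodic and its growth rate is $\lambda(\cT)$, the m.c.g.m. of the component.

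For the second assertion, let $\cT$ be the nontrivial component containing $i$. Being nontrivial, $\cT$ contains a cycle through $i$, say $Z=(v_0,v_1,\dots,v_c)$ with $v_0=v_c=i$ and $c\ge 1$ (a self-loop when $\cT$ is a single node). Given $t\ge 0$, write $t=qc+r$ with $0\le r<c$. The walk that loops $q$ times around $Z$ and then follows $v_0\to v_1\to\dots\to v_r$ has length $t$, stays inside $\cT$, and has positive weight, so its endpoint $v_r\in\cT$ gives $a_{i,v_r}^{t}\ne 0$. Symmetrically, the walk $v_{c-r}\to v_{c-r+1}\to\dots\to v_c$ preceded by $q$ loops around $Z$ has length $t$, stays inside $\cT$, ends at $i$, and has positive weight, so its starting node $v_{c-r}\in\cT$ gives $a_{v_{c-r},i}^{t}\ne 0$. (For $t=0$ both nodes may be taken equal to $i$, using $a_{ii}^{0}=1$.) These are precisely the required nodes $k$ and the second node.

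The argument is mostly bookkeeping, so the main obstacle is modest: it is the remark that a path joining two nodes of one component cannot leave that component, since this is exactly what upgrades the conclusion of Lemma~\ref{l:genper} from ``the growth rate equals $\lambda(P)$ for some path $P$'' to ``the growth rate equals exactly $\lambda(\cT)$'', and it is also what guarantees that the witnessing paths are available inside $\cT$ for the non-vanishing statement.
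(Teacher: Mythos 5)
Your proof is correct and takes essentially the same route as the paper: the decisive point in both is that a path between two nodes of one strongly connected component cannot leave that component, so every $i\to j$ path $P$ has $\lambda(P)$ equal to the component's m.c.g.m.; the paper feeds this directly into the ultimate expansion (via Theorem~\ref{pathology2}), while you reach the same conclusion per residue class through Lemmas~\ref{l:genper} and~\ref{l:paths}, which are themselves corollaries of that expansion. Your second part simply makes explicit the paper's one-line remark that $i$ lies on a cycle of nonzero weight.
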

\begin{proof}  All paths which connect $i$ to $j$ are
$\mu$-hard, with $\lambda_{\mu}=\lambda(i)=\lambda(j)$. Using
Theorem~\ref{pathology2} we obtain that only
$(\ultim_{\mu}^{(t)})_{ij}$ is nonzero and hence the ultimate
expansion reads
$a^t_{ij}\ulteq\lambda_{\mu}^t(\ultim_{\mu}^{(t)})_{ij}$. For the
second part note that $i$ belongs to a cycle with nonzero weight.
\end{proof}

If $i$ is connected to $j$ by a path, we denote this by $i\connect
j$. Observe that if $i\connect j$ then also $k\connect l$ for each
$k$ in the same component of $\digr(A)$ with $i$ and for each $l$ in
the same component of $\digr(A)$ with $j$. We also denote
$i\leftrightarrow j$ if both $i\connect j$ and $j\connect i$ (i.e.,
if $i$ and $j$ are in the same component of $\digr(A)$). In the next
theorem we describe, in terms of such relations, when the sequences
of columns $\{A^te_i,\;t\geq 0\}$ are ultimately linear periodic.

\begin{proposition}
\label{per-cols} Let $A\in\Rpnn$ and $j\in L^{\ulabel}$. The
sequence $\{A^te_j,\; t\geq 0\}$ is ultimately linear periodic if
and only if for all $i\in L^{\ulabel}$, $i\connect j$ implies
$\lambda(i)\leq\lambda(j)$.
\end{proposition}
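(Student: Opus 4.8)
The $i$-th coordinate of $A^t e_j$ is $a_{ij}^t$, so the plan is to control, for every $i$, the sequence $\{a_{ij}^t\}$ via the ultimate expansion. First I would invoke the characterization obtained above from the ultimate expansion: $\{A^t e_j,\ t\ge 0\}$ is ultimately linear periodic iff $A^t e_j\ulteq\lambda_\mu^t\ultim_\mu^{(t)}e_j$ for some $\mu\in\Sigma$. Since $j\in L^{\ulabel}$, Lemma~\ref{l:samecomp} (with both indices equal to $j$) shows that $\{a_{jj}^t\}$ is not ultimately zero and has growth rate $\lambda(j)$; matching this against the $j$-th coordinate $\lambda_\mu^t(\ultim_\mu^{(t)})_{jj}$ forces $\mu$ to be the unique index $\mu_j\in\Sigma$ with $\lambda_{\mu_j}=\lambda(j)$ (it exists because $\lambda(j)$ is the m.c.g.m.\ of a nontrivial component of $\digr(A)$, and is unique because the $\lambda_\nu$, $\nu\in\Sigma$, are pairwise distinct). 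Hence $\{A^te_j\}$ is ultimately linear periodic iff $a_{ij}^t\ulteq\lambda(j)^t(\ultim_{\mu_j}^{(t)})_{ij}$ for every $i$.

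Next I would compare this with the ultimate expansion $a_{ij}^t\ulteq\bigoplus_{\nu\in\Sigma}\lambda_\nu^t(\ultim_\nu^{(t)})_{ij}$ of Theorem~\ref{t:serg-sch}. In each residue class $l$ modulo $\gamma^{\ulabel}$ the maximum on the right is ultimately attained by the $\nu=\mu_j$ summand exactly when, for every such $l$ and every $\nu\in\Sigma$,
\[
(\ultim_\nu^{(l)})_{ij}\neq 0\ \Longrightarrow\ \lambda_\nu\le\lambda(j)\ \ \text{and}\ \ (\ultim_{\mu_j}^{(l)})_{ij}\neq 0,\qquad(\star)
\]
so the whole problem reduces to proving that $(\star)$ holds for all $i$ if and only if the stated condition ($i\connect j$, $i\in L^{\ulabel}$ $\Rightarrow$ $\lambda(i)\le\lambda(j)$) holds. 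Here the tool is Proposition~\ref{paths-entries}, by which $(\ultim_\nu^{(l)})_{ij}\neq0$ is equivalent to the existence of a path $P$ from $i$ to $j$ of length $\equiv l\ (\modd\gamma^{\ulabel})$ with $\lambda(P)=\lambda_\nu$.

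For the ``if'' direction, assume the stated condition. Fix $i,l,\nu$ with $(\ultim_\nu^{(l)})_{ij}\neq0$, and take the associated path $P\colon i\to j$ with $\lambda(P)=\lambda_\nu>0$. Picking $k\in N_P$ with $\lambda(k)=\lambda(P)$, we get $k\in L^{\ulabel}$ and $k\connect j$ (since $k$ lies on $P$, which ends at $j$), whence $\lambda_\nu=\lambda(k)\le\lambda(j)$ by hypothesis; on the other hand $j\in N_P$ forces $\lambda(P)\ge\lambda(j)$, so $\lambda(P)=\lambda(j)=\lambda_{\mu_j}$, and then the same path $P$ witnesses $(\ultim_{\mu_j}^{(l)})_{ij}\neq0$ through Proposition~\ref{paths-entries}; this is $(\star)$. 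For the ``only if'' direction, suppose instead that some $i\in L^{\ulabel}$ has $i\connect j$ and $\lambda(i)>\lambda(j)$; any path $P\colon i\to j$ then satisfies $\lambda(P)\ge\lambda(i)>\lambda(j)$ (as $i\in N_P$ and $\lambda(i)$ is the m.c.g.m.\ of $i$'s component), and choosing $\nu\in\Sigma$ with $\lambda_\nu=\lambda(P)$ and $l=l(P)\bmod\gamma^{\ulabel}$, Proposition~\ref{paths-entries} gives $(\ultim_\nu^{(l)})_{ij}\neq0$ with $\lambda_\nu>\lambda(j)$, which violates $(\star)$.

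The step I expect to be most delicate is the second half of $(\star)$ in the ``if'' direction — producing, for each relevant residue class, a path from $i$ to $j$ that carries the growth rate $\lambda(j)$ exactly, not merely something $\le\lambda(j)$. The observation that resolves it is the elementary one that any path ending at $j$ already visits the component of $j$, so automatically $\lambda(P)\ge\lambda(j)$; combined with the upper bound $\lambda(P)=\lambda_\nu\le\lambda(j)$ supplied by the hypothesis, $\lambda(P)$ is pinned to $\lambda(j)=\lambda_{\mu_j}$, after which Proposition~\ref{paths-entries} converts this path back into the required nonzero entry of $\ultim_{\mu_j}^{(l)}$. Some care is also needed in the first paragraph to confirm that the periodicity criterion really forces $\mu=\mu_j$, which is where the non-degeneracy of $\{a_{jj}^t\}$ guaranteed by $j\in L^{\ulabel}$ enters.
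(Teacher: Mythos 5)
Your proof is correct and follows essentially the same route as the paper's: both directions reduce to the ultimate expansion together with the path characterization of the nonzero entries of the terms $\ultim_{\mu}^{(t)}$ (Proposition~\ref{paths-entries}), which the paper accesses through Lemmas~\ref{l:genper}, \ref{l:paths} and~\ref{l:samecomp}. Your version is just more explicit, isolating the domination condition $(\star)$ and pinning down $\mu=\mu_j$ via the nonvanishing of $\{a_{jj}^t\}$, where the paper argues directly with the growth rates of the subsequences $\{a_{ij}^{l+s\gamma^{\ulabel}}\}$.
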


\begin{proof}
The ``only if'' part: Lemma~\ref{l:samecomp} implies that for each
$t$ there exists $k$ such that $a_{kj}^t\neq 0$, and the sequence
$\{a_{kj}^{t+s\gamma^{\ulabel}},\; s\geq 0\}$ has ultimate growth
rate $\lambda(j)$. If the condition does not hold, there exists a
path $P$ leading from $i$ to $j$ such that
$\lambda(P)=\lambda(i)>\lambda(j)$, and by Lemma~\ref{l:paths} there
is a subsequence of $\{a_{ij}^l,\; l\geq 0\}$ with ultimate growth
rate at least $\lambda(i)$.

The ``if'' part: If the sequence $\{A^te_j,\; t\geq 0\}$ is not
ultimately linear periodic, then some of its entries by Lemma~\ref{l:samecomp} have ultimate
growth rate $\lambda(j)$ and there is a subsequence of
$\{a_{kj}^t,\;t\geq 0\}$, for some $k\in N$, which has a different
ultimate growth rate. Lemma~\ref{l:genper} implies that this growth
rate has to be greater than $\lambda(j)$, and must be the m.c.g.m.
of a component which has access to $j$.
\end{proof}

We denote $i\strconnect j$ and say that $i$ {\em strongly accesses}
$j$, if $i$ can be connected to $j$ by a path of any length starting
from a certain number $T_{ij}$. For example, $i$ strongly accesses
$j$ if $i\connect j$ and the cyclicities of the components of
$\digr(A)$ containing $i$ and $j$ are coprime.  Observe that if
$i\strconnect j$ then also $k\strconnect l$ for each
$k\leftrightarrow i$ and $l\leftrightarrow j$. Now we show that
strong access relations are essential for the ultimate linear
periodicity of all sequences $\{A^t(e_i\oplus e_j),\;t\geq 0\}$.

\begin{proposition}
\label{per-2cols} Let $A\in\Rpnn$ and $i,j\in L^{\ulabel}$. Suppose
that $\{A^te_i,\; t\geq 0\}$ and $\{A^te_j,\; t\geq 0\}$ are
ultimately linear periodic. Then $\{A^t(e_k\oplus e_l),\;t\geq 0\}$
is also ultimately linear periodic for all $k\leftrightarrow i$ and
$l\leftrightarrow j$, if and only if either $i\strconnect j$, or
$j\strconnect i$, or both are false but $\lambda(i)=\lambda(j)$.
\end{proposition}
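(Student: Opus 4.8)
The plan is to handle $A^t(e_k\oplus e_l)=A^te_k\oplus A^te_l$ entrywise, building on what is already known about the two columns. First observe that for every $k\leftrightarrow i$ the sequence $\{A^te_k,\ t\ge 0\}$ is ultimately linear periodic with growth rate $\lambda(i)$: since $i'\connect k\iff i'\connect i$ and $\lambda(k)=\lambda(i)$, Proposition~\ref{per-cols} gives the same criterion for $e_k$ as for $e_i$, which holds by hypothesis; and by Lemma~\ref{l:samecomp} the entry $a^t_{kk}$, which is not eventually $0$ (as $k$ lies on a cycle), grows at rate $\lambda(k)=\lambda(i)$, so that is the growth rate of the whole column. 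Likewise $\{A^te_l,\ t\ge 0\}$ is ultimately linear periodic with growth rate $\lambda(j)$ for every $l\leftrightarrow j$, and, by the remarks preceding Lemma~\ref{l:genper}, both have period dividing $\gamma^{\ulabel}$. Assume without loss of generality $\lambda(i)\ge\lambda(j)$.

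If $\lambda(i)=\lambda(j)$, then in each residue class $r$ modulo $\gamma^{\ulabel}$ both $a^t_{mk}$ and $a^t_{ml}$ are eventually a constant (depending on $m,r$) times $\lambda(i)^t$, hence so is their maximum, and the sum is ultimately linear periodic; this is the $\lambda(i)=\lambda(j)$ alternative. From now on assume $\lambda(i)>\lambda(j)$. Then $i\not\connect j$: otherwise Proposition~\ref{per-cols} applied to the ultimately linear periodic column $e_j$ would force $\lambda(i)\le\lambda(j)$. In particular $i$ does not strongly access $j$, so the assertion reduces to: $\{A^t(e_k\oplus e_l)\}$ is ultimately linear periodic for \emph{all} $k\leftrightarrow i$, $l\leftrightarrow j$ if and only if $j\strconnect i$. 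The key reformulation is that, for a fixed pair $k,l$, the sum is ultimately linear periodic \emph{iff} $A^t(e_k\oplus e_l)\ulteq A^te_k$, i.e.\ iff on every residue class modulo $\gamma^{\ulabel}$ the eventual support of $A^te_l$ is contained in that of $A^te_k$. Indeed, where $(A^te_k)_m$ is eventually nonzero it has rate $\lambda(i)$ and dominates $(A^te_l)_m$ (rate $\le\lambda(j)<\lambda(i)$), so the $m$-th entry of the sum equals $(A^te_k)_m$ eventually; where $(A^te_k)_m$ is eventually $0$, the $m$-th entry of the sum equals $(A^te_l)_m$, whose nonzero values have rate $\lambda(j)\neq\lambda(i)$; as $A^te_k$ is not eventually the zero vector ($\lambda(i)>0$), the sum is ultimately linear periodic precisely when the latter situation never occurs.

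For the ``if'' direction, suppose $j\strconnect i$; then $l\strconnect k$ for all $l\leftrightarrow j$, $k\leftrightarrow i$. Fix such $l,k$ and a residue $r$, and let $(A^te_l)_m$ be eventually nonzero on $r$; choosing one large $t_0\equiv r$ gives a path of length $t_0$ from $m$ to $l$, and appending the paths from $l$ to $k$ of every length $s\ge T_{lk}$ produces paths from $m$ to $k$ of every length $\ge t_0+T_{lk}$, so $(A^te_k)_m$ is eventually nonzero on $r$; the required support containment follows, hence the sum is ultimately linear periodic. For the ``only if'' direction, assume the sum is ultimately linear periodic for all admissible $k,l$; taking $k=i$ gives $\supp(A^te_l)\subseteq\supp(A^te_i)$ eventually on each residue class, for every $l\leftrightarrow j$. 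Let $\sigma$ be the cyclicity of the strongly connected component $C$ of $\digr(A)$ through $j$; one checks $\sigma\mid\gamma^{\ulabel}$ (every cycle length in $C$, in particular those of its critical cycles, is a multiple of $\sigma$). By Proposition~\ref{ryser} the lengths of $j$-to-$v$ paths are all congruent modulo $\sigma$, and as $v$ ranges over $C$ these residues exhaust $\mathbb Z/\sigma$; fix, for each $\rho$, a node $l_\rho\in C$ with a $j$-to-$l_\rho$ path of length $\equiv\rho\ (\modd \sigma)$. Closed walks at $l_\rho$ exist of every large length $\equiv 0\ (\modd \sigma)$, so $a^t_{l_\rho l_\rho}\neq 0$ for all large $t$ in each residue class $\equiv 0\ (\modd \sigma)$, and the containment gives $a^t_{l_\rho i}\neq 0$ there, i.e.\ paths from $l_\rho$ to $i$ of every large length $\equiv 0\ (\modd \sigma)$. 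Concatenating $j\to l_\rho\to i$ yields a $j$-to-$i$ path of some large length $\equiv\rho\ (\modd \sigma)$; since the set of lengths of $j$-to-$i$ paths is closed under adding large multiples of $\sigma$ (prepend closed walks at $j$), it contains all large lengths $\equiv\rho\ (\modd \sigma)$, and letting $\rho$ vary, all large lengths. Hence $j\strconnect i$.

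The case $\lambda(i)<\lambda(j)$ is handled symmetrically, interchanging the roles of $i,j$ and of $k,l$. The main obstacle is the last step: upgrading the purely combinatorial support containment to a strong-access statement forces one to exploit the freedom of choosing $l$ across \emph{all} cyclic classes of $j$'s component, so that the resulting $j$-to-$i$ paths realize every residue modulo $\sigma$, after which closing up under cycle lengths at $j$ yields all large lengths; getting the bookkeeping of residues modulo $\sigma$ versus modulo $\gamma^{\ulabel}$ right is the delicate part.
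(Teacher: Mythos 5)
Your proof is correct and follows essentially the same route as the paper's: split on whether $\lambda(i)=\lambda(j)$, use Proposition~\ref{per-cols} to rule out access from the larger-rate component to the smaller one, reduce ultimate linear periodicity of the sum to an eventual support inclusion per residue class (smaller-rate column inside larger-rate column, the latter dominating wherever both are nonzero), and establish that inclusion via appended strong-access paths in one direction while converting it into paths of all sufficiently large lengths in the other. The only divergence is in your ``only if'' step: the paper obtains $a^t_{ji}\neq 0$ for every large $t$ in one stroke by letting the intermediate node supplied by Lemma~\ref{l:samecomp} vary with $t$, whereas you fix cyclic-class representatives $l_\rho$ and track residues modulo the component cyclicity $\sigma$ --- more laborious, but valid (and your appeal to ``$A^te_k$ is not eventually the zero vector'' is justified, per residue class, by the second part of Lemma~\ref{l:samecomp}).
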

\begin{proof}
If $\lambda(i)=\lambda(j)$ then both $\{A^te_k,\; t\geq 0\}$ and
$\{A^te_l,\; t\geq 0\}$ for all $k\leftrightarrow i$ and
$l\leftrightarrow j$ have this growth rate and $\{A^t(e_k\oplus
e_l),\;t\geq 0\}$ is ultimately linear periodic with this growth
rate. So it remains to consider the case $\lambda(i)<\lambda(j)$. In
this case Proposition \ref{per-cols} implies $j\not\connect i$, and
therefore we have to show that $\{A^t(e_k\oplus e_l),\;t\geq 0\}$
are ultimately linear periodic for all $k\leftrightarrow i$ and
$l\leftrightarrow j$ if and only if $i\strconnect j$.

The ``if'' part:  For each $t\colon 0\leq t<\gamma^{\ulabel}$, if
there exists $m\in N$ and $s_1\geq 0$ such that
$a_{mk}^{t+s_1\gamma^{\ulabel}}\neq 0$, then there exists a path
$P\in\Pi_{mk,t+s_1\gamma^{\ulabel}}$. As $k\strconnect l$, this path
can be joined with a path from $k$ to $l$ of length
$s_2\gamma^{\ulabel}$, and we get that
$a_{ml}^{t+(s_1+s_2)\gamma^{\ulabel}}\neq 0$. Using
Lemma~\ref{l:paths} we obtain $a_{ml}^{t+s\gamma^{\ulabel}}\neq 0$
for all sufficiently large $s$ and it dominates over
$a_{mk}^{t+s\gamma^{\ulabel}}$ since it has larger growth rate.

The ``only if'' part: The ultimate linear periodicity of
$\{A^t(e_k\oplus e_j),\;t\geq 0\}$, for any $k\leftrightarrow i$,
implies that $\supp(A^t e_k)\subseteq\supp(A^t e_j)$ for all large
enough $t$. By Lemma~\ref{l:samecomp} there is $k\leftrightarrow i$
such that $(A^t)_{ik}\neq 0$, hence also $(A^t)_{ij}\neq 0$. As we
reasoned for any $t$, it follows that $i\strconnect j$.
\end{proof}

\begin{theorem}[Orbit periodicity]
\label{t:totalper} $A\in\Rpnn$ is orbit periodic if and only if the
following conditions hold for all $i,j\in L^{\ulabel}$:
\begin{itemize}
\item[1.] $i\connect j$ implies $\lambda(i)\leq\lambda(j)$,
\item[2.] if neither $j\strconnect i$ nor $i\strconnect j$ then
$\lambda(i)=\lambda(j)$,
\end{itemize}
or equivalently if $\{A^t(e_i\oplus e_j),\; t\geq 0\}$ are
ultimately linear periodic for all $i,j\in L^{\ulabel}$.
\end{theorem}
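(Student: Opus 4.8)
The plan is to reduce the theorem to the two propositions already established (Proposition~\ref{per-cols} and Proposition~\ref{per-2cols}) and to verify the equivalence with the ``orbit periodic'' definition by a max-linearity argument. The key observation is that by the ultimate expansion~\eqref{e:ultima} and the discussion following the definition of orbit periodicity, the sequence $\{A^ty,\;t\geq 0\}$ is ultimately linear periodic if and only if there exists a single $\mu\in\Sigma$ with $A^ty\ulteq\lambda_\mu^t\ultim_\mu^{(t)}y$; since $A^ty=\bigoplus_\nu\lambda_\nu^t\ultim_\nu^{(t)}y$ ultimately, this happens precisely when the ``heaviest surviving'' term, i.e.\ the one of largest growth rate among those with $\ultim_\nu^{(t)}y\neq 0$, dominates all the others in each residue class modulo $\gamma^{\ulabel}$. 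First I would record that it suffices to test orbit periodicity on vectors $y=e_i\oplus e_j$ for $i,j\in L^{\ulabel}$, and even just on $y=e_i$: a general $y$ is a max-combination $\bigoplus_k y_k e_k$, so $A^ty=\bigoplus_k y_k A^te_k$; the trivial coordinates $k\notin L^{\ulabel}$ contribute nothing after time $n$ (no cycles through $\overline L$), and for $i\in L^{\ulabel}$ the growth rate of $\{A^te_i\}$, when it is ultimately linear periodic, is determined by the m.c.g.m.\ of the heaviest component accessing $i$. The max of finitely many ultimately linear periodic sequences with growth rates $\lambda^{(k)}$ is again ultimately linear periodic iff one growth rate strictly dominates, or all coincide on the support that persists --- and the two-vector case already encodes the obstruction to this.

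\textbf{Main steps.} I would proceed as follows. Step 1: orbit periodicity implies ultimate linear periodicity of each $\{A^te_i\}$ and each $\{A^t(e_i\oplus e_j)\}$ for $i,j\in L^{\ulabel}$ --- immediate from the definition by specializing $y$. Step 2: conversely, if all $\{A^t(e_i\oplus e_j)\}$ with $i,j\in L^{\ulabel}$ are ultimately linear periodic, then so is $\{A^ty\}$ for every $y$: decompose $A^ty=\bigoplus_{k\in L^{\ulabel}}y_k A^te_k$ (dropping trivial coordinates), and use that pairwise ultimate linear periodicity forces a common growth-rate structure. Here I would argue that among the indices $k$ with $y_k A^te_k$ not ultimately zero, either all have the same growth rate --- in which case the max is ultimately linear periodic with that rate --- or some pair $k,l$ has $\lambda(k)<\lambda(l)$ but $\{A^t(e_k\oplus e_l)\}$ fails to be ultimately linear periodic, contradicting the hypothesis; the point is that the heavier column must ultimately ``absorb'' the support of the lighter one, which is exactly the strong-access condition extracted in Proposition~\ref{per-2cols}. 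Step 3: translate the condition ``$\{A^t(e_i\oplus e_j)\}$ ultimately linear periodic for all $i,j\in L^{\ulabel}$'' into conditions 1.\ and 2.\ of the theorem. Condition~1 is exactly Proposition~\ref{per-cols} applied to each column (which is the case $i=j$, or rather the requirement that each single column be ultimately linear periodic). Condition~2 is the content of Proposition~\ref{per-2cols}: once both columns $\{A^te_i\}$, $\{A^te_j\}$ are ultimately linear periodic (guaranteed by condition~1), the pair $\{A^t(e_k\oplus e_l)\}$ is ultimately linear periodic for all $k\leftrightarrow i$, $l\leftrightarrow j$ iff $i\strconnect j$ or $j\strconnect i$ or $\lambda(i)=\lambda(j)$; and since $i\strconnect j$ implies $i\connect j$, condition~1 together with $\lambda(i)>\lambda(j)$ rules out $i\strconnect j$ in the relevant direction, so conditions 1 and 2 together are equivalent to the pairwise statement.

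\textbf{Where the work is.} The routine directions are the specializations in Step~1 and the bookkeeping in Step~3; the substantive step is Step~2, the reduction from arbitrary $y$ to two-element supports. The subtlety is that ``max of finitely many ultimately linear periodic sequences is ultimately linear periodic'' is \emph{not} automatic --- it can fail when two sequences of equal growth rate have supports that oscillate against each other, or when a heavier sequence has a support strictly smaller than a lighter one on some residue class. The hypothesis only gives control of \emph{pairs}, so I would need to show pairwise control propagates to the whole family. The cleanest way is: given $y$, let $\lambda_{\max}(y)$ be the largest growth rate occurring among the persistent columns $\{A^te_k\}$, $y_k\neq 0$; using Proposition~\ref{per-2cols} (applied to one heaviest $k_0$ against each other $k$) one deduces that for each $k$ with $\lambda(k)<\lambda_{\max}(y)$ the support of $y_k A^te_k$ is ultimately swallowed by that of $y_{k_0}A^te_{k_0}$ in every residue class, while the columns with $\lambda(k)=\lambda_{\max}(y)$ all share that growth rate and hence their max is ultimately linear periodic; therefore $A^ty$ is ultimately linear periodic with growth rate $\lambda_{\max}(y)$. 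This is the only place where a genuine combinatorial argument (rather than a direct appeal to the quoted propositions) is needed, and it rests on the path-level description in Proposition~\ref{paths-entries} and Lemma~\ref{l:paths}. The equivalence with conditions 1 and 2 then follows formally.
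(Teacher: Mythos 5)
Your reduction of the equivalence between pairwise ultimate linear periodicity and conditions 1--2 to Propositions \ref{per-cols} and \ref{per-2cols} is fine and is essentially what the paper does for that part. For the substantive direction (conditions 1--2 imply orbit periodicity) you take a genuinely different route: the paper argues directly on the subgraph $\digr(A,y)$ induced by \emph{all nodes having access to} $\supp(y)$, orders its nontrivial components by strong access, and produces in every residue class a path through a maximal component (via Lemmas \ref{l:genper} and \ref{l:paths}); you instead decompose $A^ty=\bigoplus_k y_kA^te_k$ and propagate the pairwise ``swallowing'' of supports. For $y$ supported on $L^{\ulabel}$ your argument is correct: condition 1 rules out $k_0\strconnect k$ when $\lambda(k)<\lambda(k_0)$, so Proposition \ref{per-2cols} forces $k\strconnect k_0$, the lighter column's support is ultimately contained in that of a heaviest column, and columns of equal maximal rate combine harmlessly.

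There is, however, a genuine gap in the step where you discard the coordinates of $y$ outside $L^{\ulabel}$. The assertion that the trivial coordinates ``contribute nothing after time $n$'' is false: $A^te_k$ is the $k$-th \emph{column} of $A^t$, and its entries are weights of paths \emph{ending} at $k$; such a path of length $t\geq n$ must indeed contain a cycle, but that cycle lies in a nontrivial component visited earlier along the path, not at $k$. For instance, if node $1$ carries a loop and $(1,2)\in E(A)$ with node $2$ trivial, then $(A^te_2)_1\neq 0$ for all $t\geq 1$. Consequently your proof says nothing about vectors $y$ with $\supp(y)\cap(N\setminus L^{\ulabel})\neq\emptyset$, and would wrongly conclude that $A^ty$ is ultimately zero when $\supp(y)$ consists only of trivial nodes. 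Repairing this inside your framework is not immediate: decomposing each long path ending at a trivial node $k$ at its last visit to $L^{\ulabel}$ writes $A^te_k$ as a max of time-shifted columns $A^{t-s}e_j$ with $j\in L^{\ulabel}$ and $s<n$, and one must then compare ultimately linear periodic sequences across different residue shifts, which the pairwise hypothesis does not directly control. This is precisely why the paper's proof works with the set of all nodes accessing $\supp(y)$ and with $\mu$-hard paths, rather than with the columns $A^te_k$ for $k\in\supp(y)$.
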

\begin{proof} We need only prove that 1. and 2. are sufficient for
orbit periodicity, the rest relies on Propositions \ref{per-cols}
and \ref{per-2cols}.

Let $\digr(A,y)$ be the subgraph induced by the set of nodes that
have access to $\supp(y):=\{i\colon y_i\neq 0\}$. Nontrivial
components $\digr_{\sigma}$ of $\digr(A,y)$ are ordered by relation
$\digr_{\sigma_1}\preceq\digr_{\sigma_2}$ if $i\strconnect j$ for
some (and hence all) $i\in\digr_{\sigma_1}$ and
$j\in\digr_{\sigma_2}$. Consider the maximal components with respect
to this relation, by conditions 1. and 2. they must have the same
m.c.g.m. and it must be the greatest one. We denote this m.c.g.m. by
$\lambda$ and show that it is the ultimate growth rate of $A^ty$.

By Lemma~\ref{l:genper} for each $l\colon 0\leq l<\gamma$, the
subsequence $\{a_{ij}^{l+s\gamma^{\ulabel}},\; s\geq 0\}$ has a
certain ultimate growth rate if it is not ultimately zero. We have
to show that $\lambda$ is the maximal growth rate of
$\{a_{ik}^{l+s\gamma^{\ulabel}},\; s\geq 0\}$ over $k\in\supp(y)$,
for every fixed $i\in N$ and $l\colon 0\leq l<\gamma^{\ulabel}$.
Then it follows that
$A^{t+\gamma^{\ulabel}}y\ulteq\lambda^{\gamma^{\ulabel}}A^ty$.

To avoid trivialities we assume that there exists $k\in\supp(y)$ and
such that $\{a_{ik}^{l+s\gamma^{\ulabel}},\;s\geq 0\}$ is not
ultimately zero. Then for some $k\in\supp(y)$ there exists a path
$P\in\Pi_{ik,t}$ where $t\equiv l(\modd\; \gamma^{\ulabel})$ which
visits a nontrivial component $\digr_{\sigma_1}$ of $\digr(A,y)$. If
the m.c.g.m. of $\digr_{\sigma_1}$ is $\lambda$, then by
Lemma~\ref{l:paths} the growth rate of
$\{a_{ik}^{l+s\gamma^{\ulabel}},\; s\geq 0\}$ is not less than
$\lambda$, hence it must be $\lambda$ and we are done. If the
m.c.g.m. of $\digr_{\sigma_1}$ is less than $\lambda$, then
$\digr_{\sigma_1}$ strongly accesses some component
$\digr_{\sigma_2}$ with m.c.g.m. $\lambda$, and $\digr_{\sigma_2}$
accesses a node $k'\in\supp(y)$. Due to the strong access we can
adjust the length of the path from $\digr_{\sigma_1}$ to
$\digr_{\sigma_2}$ if necessary, and we obtain a path
$P'\in\Pi_{ik',t'}$ where $t'\equiv l(\modd\;\gamma^{\ulabel})$. By
Lemma~\ref{l:paths} we obtain that the growth rate of
$\{a_{ik'}^{l+s\gamma},\; s\geq 0\}$ is not less than $\lambda$,
hence it must be $\lambda$.
\end{proof}

To assess the computational feasibility of condition 2. in
Theorem~\ref{t:totalper} we need the following observation which
uses the ultimate expansion, see Theorem~\ref{t:serg-sch} and
Section~\ref{s:ultexp}.

\begin{theorem}
\label{t:cond-feas} Let $A\in\Rpnn$ be such that the sequences
$\{A^te_i,\;t\geq 0\}$ are ultimately linear periodic for all $i\in
L^{\ulabel}$. Then $A$ is orbit periodic if and only if the
following holds for all $\mu,\nu\in\Sigma$:
\begin{equation}
\label{e:cond-feas} \lambda_{\mu}<\lambda_{\nu}\Rightarrow
\bigcup_{i\in N_{\mu}^{\ulabel}}\supp(\ultim_{\mu}^{(1)}
e_i)\subseteq \bigcap_{j\in N_{\nu}^{\ulabel}}
\supp(\ultim_{\nu}^{(1)}e_j).
\end{equation}
\end{theorem}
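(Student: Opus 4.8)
The plan is to combine the characterization of orbit periodicity through two-column orbits with the ultimate expansion~\eqref{e:ultima}. By Theorem~\ref{t:totalper}, $A$ is orbit periodic if and only if $\{A^t(e_i\oplus e_j),\;t\geq 0\}$ is ultimately linear periodic for all $i,j\in L^{\ulabel}$; moreover the hypothesis that all column orbits $\{A^te_i,\;t\geq 0\}$, $i\in L^{\ulabel}$, are ultimately linear periodic is, by Proposition~\ref{per-cols}, condition~1 of Theorem~\ref{t:totalper}. Since the supports in \eqref{e:cond-feas} and the orbit periodicity of $A$ are both invariant under diagonal similarity scaling, I would assume throughout that all $S_{\mu}^{\ulabel}$ are $0-1$ matrices. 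I would also record that, for $i\in L^{\ulabel}$, the sequence $\{A^te_i,\;t\geq 0\}$ has ultimate growth rate exactly $\lambda(i)$: some entry has growth $\lambda(i)$ by Lemma~\ref{l:samecomp}, while condition~1 and Lemma~\ref{l:genper} prevent any entry from growing faster.

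The first step is to reduce the two-column condition to a support inclusion. If $\lambda(i)=\lambda(j)$, then $\{A^t(e_i\oplus e_j)\}$ is automatically ultimately linear periodic, being the componentwise maximum of two ultimately linear periodic vectors of a common growth rate. So let $\lambda(i)<\lambda(j)$; then $\{A^t(e_i\oplus e_j),\;t\geq 0\}$ is ultimately linear periodic if and only if $\supp(A^te_i)\subseteq\supp(A^te_j)$ for all large $t$. Indeed, if the inclusion holds then wherever $A^te_i$ is nonzero the corresponding entry of $A^te_j$ is nonzero and, having the larger growth rate $\lambda(j)$, dominates eventually, so $A^t(e_i\oplus e_j)\ulteq A^te_j$; conversely, an entry eventually nonzero in $A^te_i$ but eventually zero in $A^te_j$ along some residue class modulo $\gamma^{\ulabel}$ would give $A^t(e_i\oplus e_j)$ an entry of growth $\lambda(i)<\lambda(j)$, contradicting ultimate linear periodicity (using that an ultimately linear periodic column is, along each residue class, eventually all-zero or all-nonzero). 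After collecting finitely many thresholds, $A$ is orbit periodic if and only if $\supp(A^te_i)\subseteq\supp(A^te_j)$ for all large $t$ and all $i,j\in L^{\ulabel}$ with $\lambda(i)<\lambda(j)$.

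The second, main step is to rewrite the eventual support of a column of $A^t$ in terms of the ultimate $CSR$ terms. Fix $i\in L^{\ulabel}$ with $\lambda(i)=\lambda_{\mu}$, $\mu\in\Sigma$. In $A^t\ulteq\bigoplus_{\nu\in\Sigma}\lambda_{\nu}^t\ultim_{\nu}^{(t)}$, the terms with $\lambda_{\nu}>\lambda_{\mu}$ contribute nothing to the $i$-th column (a nonzero entry would make that column grow faster than $\lambda_{\mu}$), and the terms with $\lambda_{\nu}<\lambda_{\mu}$ contribute only entries already present in $\lambda_{\mu}^t\ultim_{\mu}^{(t)}e_i$; hence $\supp(A^te_i)=\supp(\ultim_{\mu}^{(t)}e_i)$ for large $t$. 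Corollary~\ref{c:lindep} applied to $A_{\mu}^{\ulabel}$ expresses every column $\ultim_{\mu}^{(t)}e_i$ as a max-combination of the critical columns $\ultim_{\mu}^{(t)}e_{i'}$, $i'\in N_{\mu}^{\ulabel}$, with nonnegative coefficients at least one of which — attached to a critical node of $i$'s own component — is nonzero. Consequently, for all large $t$, $\bigcup\{\supp(A^te_i):i\in L^{\ulabel},\,\lambda(i)=\lambda_{\mu}\}=\bigcup_{i'\in N_{\mu}^{\ulabel}}\supp(\ultim_{\mu}^{(t)}e_{i'})$ and dually $\bigcap\{\supp(A^te_j):j\in L^{\ulabel},\,\lambda(j)=\lambda_{\nu}\}=\bigcap_{j'\in N_{\nu}^{\ulabel}}\supp(\ultim_{\nu}^{(t)}e_{j'})$. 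Finally, by Theorem~\ref{t:rotate} the cyclic-class rotation merely permutes the columns $\ultim_{\mu}^{(t)}e_{i'}$, $i'\in N_{\mu}^{\ulabel}$, among themselves as $t$ varies, so the above unions and intersections are independent of $t$ and may be evaluated at $t=1$. Replacing ``$\supp(A^te_i)\subseteq\supp(A^te_j)$ for all $i,j$'' by ``$\bigcup_i\supp(A^te_i)\subseteq\bigcap_j\supp(A^te_j)$'' now turns the criterion of the previous paragraph precisely into \eqref{e:cond-feas}.

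The step I expect to be the main obstacle is this passage from the whole collection of $A^t$-column supports to the two fixed unions and intersections of $\ultim^{(1)}$-column supports: one must (a) use the hypothesis to discard every $CSR$ term heavier than $\lambda(i)$ from the $i$-th column, (b) descend from arbitrary nodes of a component to its critical nodes through Corollary~\ref{c:lindep}, tracking that its coefficients are Kleene-star entries and that at least one critical column of the correct component is genuinely used, and (c) extract $t$-independence from Theorem~\ref{t:rotate}, which is exactly where the reduction to all $S_{\mu}^{\ulabel}$ being $0-1$ matters. By comparison, the growth-rate and residue-class bookkeeping modulo $\gamma^{\ulabel}$, and the final equivalence between a family of inclusions and a single inclusion of a union in an intersection, are routine.
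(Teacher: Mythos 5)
Your proposal is correct and follows essentially the same route as the paper's proof: reduce orbit periodicity to the two-column criterion of Theorem~\ref{t:totalper}, use the hypothesis to identify the eventual $i$-th column of $A^t$ with the single ultimate term $\lambda_{\mu}^t\ultim_{\mu}^{(t)}e_i$ for $\lambda_{\mu}=\lambda(i)$, then invoke Corollary~\ref{c:lindep} to pass from $M_{\mu}^{\ulabel}$ to the critical columns $N_{\mu}^{\ulabel}$ and the column-permutation property (Corollary~\ref{c:per}/Theorem~\ref{t:rotate}) to fix $t=1$. The only cosmetic difference is that you phrase the dominance condition as an inclusion of supports of columns of $A^t$ itself before translating to the $\ultim$ terms, whereas the paper works with the $\ultim$ terms directly from the start.
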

\begin{proof}
If $\{A^te_i,\;t\geq 0\}$ is ultimately linear periodic and $i\in
L^{\ulabel}$, then
$A^te_i\ulteq\lambda_{\mu}^t\ultim_{\mu}^{(t)}e_i$ for
$\lambda_{\mu}=\lambda(i)$, since $\lambda_{\mu}$ must be the growth
rate of $A^te_i$ and all other terms of the ultimate expansion have
different growth rates. Then also $A^t(e_i\oplus
e_j)\ulteq\lambda_{\mu}^t\ultim_{\mu}^{(t)}e_i\oplus\lambda_{\nu}^t\ultim_{\nu}^{(t)}e_j$
where $\lambda_{\mu}=\lambda(i)$ and $\lambda_{\nu}=\lambda(j)$
(equivalently, $i\in M_{\mu}^{\ulabel}$ and $j\in
M_{\nu}^{\ulabel}$). If $\lambda_{\mu}<\lambda_{\nu}$ then
$\{A^t(e_i\oplus e_j),\; t\geq 0\}$ is ultimately linear periodic if
and only if $A^t(e_i\oplus e_j)\ulteq \lambda_{\nu}^t\ultim_{\nu}^{(t)}
e_j$. This happens if and only if
\begin{equation}
\label{ultcond} \lambda_{\mu}<\lambda_{\nu}\Rightarrow
\supp(\ultim_{\mu}^{(t)}e_i)\subseteq \supp(\ultim_{\nu}^{(t)}e_j)\
\forall i\in M_{\mu}^{\ulabel},\ j\in M_{\nu}^{\ulabel},
\end{equation}
holds for all $\mu,\nu\in\Sigma$.

Corollary~\ref{c:lindep} implies that any column of
$\ultim_{\mu}^{(t)}$, resp. $\ultim_{\nu}^{(t)}$ is a max-linear
combination of columns with indices in $N_{\mu}^{\ulabel}$, resp.
$N_{\nu}^{\ulabel}$, so that the support of that column is the union
of supports of certain columns with indices in $N_{\mu}^{\ulabel}$,
resp. $N_{\nu}^{\ulabel}$. Hence we need to check the support
inclusions only for $i\in N_{\mu}^{\ulabel}$ and $j\in
N_{\nu}^{\ulabel}$. Further, Corollary~\ref{c:per} implies that the
columns of $\ultim_{\mu}^{(t)}$ (or resp. $\ultim_{\nu}^{(t)}$) with
indices in $N_{\mu}^{\ulabel}$ (or resp. $N_{\nu}^{\ulabel}$) just
permute as $t$ changes, so the inclusions need be verified only for
$t=1$. This shows that \eqref{ultcond} is equivalent to
\begin{equation}
\label{ultcond2} \lambda_{\mu}<\lambda_{\nu}\Rightarrow
\supp(\ultim_{\mu}^{(1)}e_i)\subseteq \supp(\ultim_{\nu}^{(1)}e_j)\
\forall i\in N_{\mu}^{\ulabel},\ j\in N_{\nu}^{\ulabel},
\end{equation}
which is equivalent to \eqref{e:cond-feas}. The claim is proved.
\end{proof}

Now we give a polynomial bound on the computational complexity of
verifying the orbit periodicity of a reducible matrix.

\begin{theorem}
\label{t:compcomp-totper} Let $A\in\Rpnn$. Suppose that all
components of $\digr(A)$ and access relations between them are
known, and $\lambda_{\mu}$ and $\ultim_{\mu}^{(1)}$ are computed for
all $\mu\in\Sigma$. Then the orbit periodicity of $A$ can be
verified in no more than $O(n^3)$ operations.
\end{theorem}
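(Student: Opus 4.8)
The plan is to read off a verification procedure from the structural characterization of orbit periodicity obtained in Theorems~\ref{t:totalper} and~\ref{t:cond-feas}, and to count its cost, treating every arithmetic operation or comparison as $O(1)$. First I would test condition~1 of Theorem~\ref{t:totalper}. Since $i\connect j$, and $\lambda(i),\lambda(j)$ depend only on the components containing $i$ and $j$, this reduces to a test on the condensation of $\digr(A)$: for every ordered pair of nontrivial components $(\digr_\sigma,\digr_\rho)$ with $\digr_\sigma\connect\digr_\rho$, check $\lambda(\digr_\sigma)\le\lambda(\digr_\rho)$. As $\digr(A)$ has at most $n$ components and the access relations are given, this costs $O(n^2)$; if it fails we report ``not orbit periodic'', which is correct because by Proposition~\ref{per-cols} a violation forces some $\{A^te_j,\,t\ge0\}$ with $j\in L^{\ulabel}$ not to be ultimately linear periodic, so $y=e_j$ witnesses failure of orbit periodicity.

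Assume now condition~1 holds. By Proposition~\ref{per-cols} all sequences $\{A^te_i,\,t\ge0\}$ with $i\in L^{\ulabel}$ are then ultimately linear periodic, so Theorem~\ref{t:cond-feas} applies and $A$ is orbit periodic if and only if the support inclusions~\eqref{e:cond-feas} hold. To check them I would, for each $\mu\in\Sigma$, use the critical node set $N_\mu^{\ulabel}$ (known together with $\ultim_\mu^{(1)}$) and form the two subsets of $N$
\[
U_\mu:=\bigcup_{i\in N_\mu^{\ulabel}}\supp\bigl(\ultim_\mu^{(1)}e_i\bigr),\qquad
I_\mu:=\bigcap_{j\in N_\mu^{\ulabel}}\supp\bigl(\ultim_\mu^{(1)}e_j\bigr),
\]
each obtained by scanning the columns of $\ultim_\mu^{(1)}$ indexed by $N_\mu^{\ulabel}$ and taking the union, resp. intersection, of their supports. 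Since the sets $N_\mu^{\ulabel}$ are pairwise disjoint, forming all the $U_\mu$ and $I_\mu$ costs $O\bigl(n\sum_\mu|N_\mu^{\ulabel}|\bigr)=O(n^2)$. Finally, for each ordered pair $\mu,\nu\in\Sigma$ with $\lambda_\mu<\lambda_\nu$ I test the inclusion $U_\mu\subseteq I_\nu$ in $O(n)$ time (via characteristic vectors); there are at most $|\Sigma|^2=O(n^2)$ such pairs, so this dominating stage costs $O(n^3)$. We declare $A$ orbit periodic exactly when condition~1 and all these inclusions hold; correctness follows from Theorems~\ref{t:totalper} and~\ref{t:cond-feas}, and the total count is $O(n^2)+O(n^2)+O(n^3)=O(n^3)$.

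I do not expect a genuine obstacle, since the statement is essentially a matter of carefully accounting for the cost of equivalences already established. The two points deserving care are: the logical ordering — condition~1 of Theorem~\ref{t:totalper} is, via Proposition~\ref{per-cols}, precisely the hypothesis under which Theorem~\ref{t:cond-feas} may be invoked, so it must be verified first; and the observation that the number $|\Sigma|$ of distinct critical values of the components of $\digr(A)$ can be as large as $\Theta(n)$, so that the pairwise support-inclusion tests, rather than any matrix arithmetic, are what produce the $O(n^3)$ bound.
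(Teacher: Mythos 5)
Your proposal is correct and follows essentially the same route as the paper's own proof: verify condition~1 of Theorem~\ref{t:totalper} on the component/access-relation level in $O(n^2)$, then form the unions and intersections of supports of the critical columns of the $\ultim_{\mu}^{(1)}$ in $O(n^2)$, and finally perform $O(n^2)$ Boolean support-inclusion comparisons at $O(n)$ each, for a total of $O(n^3)$. Your explicit remark that condition~1, via Proposition~\ref{per-cols}, is exactly what licenses the invocation of Theorem~\ref{t:cond-feas} is a welcome clarification of a step the paper leaves implicit, but it is not a different argument.
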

\begin{proof}
For all pairs $\mu,\nu$, we must verify condition 1. of
Theorem~\ref{t:totalper} and condition \eqref{e:cond-feas}. The
first of these conditions is verified for the pairs of components of
$\digr(A)$ and it takes no more than $O(n^2)$, if all access
relations between them are known. To verify condition
\eqref{e:cond-feas}, we need to compute all unions $\bigcup_{i\in
N_{\mu}^{\ulabel}}\supp(\ultim_{\mu}^{(1)} e_i)$ and intersections
$\bigcap_{i\in N_{\mu}^{\ulabel}}\supp(\ultim_{\mu}^{(1)} e_i)$,
which requires $O(n^2)$ operations, and then make $O(n^2)$
comparisons of Boolean vectors, which requires no more than $O(n^3)$
operations.
\end{proof}

We combine the results of Theorems~\ref{t:compcomp}
and~\ref{t:compcomp-totper}.

\begin{corollary}
\label{c:compcomp} Given $A\in\Rpnn$, it takes no more than
$O(n^4\log n)$ operations to verify whether it is orbit periodic or
not.
\end{corollary}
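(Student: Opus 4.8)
The plan is to chain the two complexity bounds already established, Theorem~\ref{t:compcomp} and Theorem~\ref{t:compcomp-totper}. First I would run the algorithm of Theorem~\ref{t:compcomp} for problem~(P2) with $t=1$. By that theorem this costs at most $O(n^4\log n)$ operations and produces all ingredients of the ultimate expansion: the nontrivial components of $\digr(A)$, their maximal cycle geometric means (hence the scalars $\lambda_\mu$, the index set $\Sigma$, and the critical digraphs $\crit_\mu^{\ulabel}$ together with their cyclic classes), and the matrices $\ultim_\mu^{(1)}$ for every $\mu\in\Sigma$. These are precisely the data listed as hypotheses of Theorem~\ref{t:compcomp-totper}, with the sole exception of the access relations between components.

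Second, I would observe that this missing data, namely all relations $i\connect j$ and $i\strconnect j$, is obtainable in time that is subdominant to $O(n^4\log n)$: the ordinary access relation is the transitive closure of the condensation digraph and is computable in $O(n^3)$; and since the cyclicities of all nontrivial components are already available from the computation of the $\crit_\mu^{\ulabel}$, the strong-access relation $\strconnect$ — which depends only on ordinary access together with these cyclicities — can likewise be decided in polynomial time, well within $O(n^4\log n)$.

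Finally, I would invoke Theorem~\ref{t:compcomp-totper}: given exactly the collection of data now assembled, it verifies orbit periodicity of $A$ — equivalently conditions~1 and~2 of Theorem~\ref{t:totalper}, reformulated via Theorem~\ref{t:cond-feas} as the support inclusions~\eqref{e:cond-feas} — in at most $O(n^3)$ operations. Summing the three contributions, $O(n^4\log n)+O(n^3)+O(n^3)=O(n^4\log n)$, gives the claim. There is no genuine obstacle in this argument; the only point that deserves care is to confirm that the single pass of~(P2) in Theorem~\ref{t:compcomp} already outputs the matrices $\ultim_\mu^{(1)}$ and the scalars $\lambda_\mu$ in exactly the form demanded by Theorem~\ref{t:compcomp-totper}, so that no hidden recomputation is incurred in passing from one theorem to the other.
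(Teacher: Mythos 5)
Your proposal is correct and follows exactly the route the paper intends: the paper's entire justification is the one-line remark that the corollary follows by combining Theorems~\ref{t:compcomp} and~\ref{t:compcomp-totper}, and you have simply spelled out that chaining (including the minor point, harmless and easily handled, that the access relations required as input to Theorem~\ref{t:compcomp-totper} are computable within the stated bound).
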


\section{Examples}
\label{s:ex}

All examples in this section will be in the max-plus setting
$\R_{\max,+}:=(\R\cup\{-\infty\},\oplus=\max,\otimes=+)$.

{\em Example 1.} We construct the canonical Nachtigall expansion of
$A^t$ for
\begin{equation}
A=
\begin{pmatrix}
-1 & 0 & -7 & -6\\
0 & -1 & -5 & -4\\
-7 & -5 & -1 & -3\\
-6 & -4 & -3 & -2
\end{pmatrix}.
\end{equation}
We start with $A_1=A$. The maximal cycle mean is $\lambda_1=0$, and
the component $\crit_1$ has two nodes $1,2$ and two edges $(1,2)$
and $(2,1)$. The cyclicity is $\gamma_1:=2$.

We proceed by setting the entries in first two rows and columns to
$-\infty$, thus obtaining
\begin{equation}
A_2=
\begin{pmatrix}
-\infty & -\infty & -\infty & -\infty\\
-\infty & -\infty & -\infty & -\infty\\
-\infty & -\infty & -1 & -3\\
-\infty & -\infty & -3 & -2
\end{pmatrix}.
\end{equation}
The maximum cycle mean is $\lambda_2=-1$, and the component
$\crit_2$ has one node $3$ and one edge $(3,3)$. The cyclicity is
$\gamma_2=1$.

Now we set everything to $-\infty$ except for the entry $(4,4)$:
\begin{equation}
A_3=
\begin{pmatrix}
-\infty & -\infty & -\infty & -\infty\\
-\infty & -\infty & -\infty & -\infty\\
-\infty & -\infty & -\infty & -\infty\\
-\infty & -\infty & -\infty & -2
\end{pmatrix}.
\end{equation}
The maximum cycle mean is $\lambda_3=-2$, and the component
$\crit_3$ has one node $4$ and one edge $(4,4)$. The cyclicity is
$\gamma_3=1$.

We obtain matrices $S_1$, $S_2$ and $S_3$ which correspond,
respectively, to $\crit_1$, $\crit_2$ and $\crit_3$:
\begin{equation}
\label{s1s2s3}
\begin{split}
S_1&=
\begin{pmatrix}
-\infty & 0 & -\infty & -\infty\\
0 & -\infty & -\infty & -\infty\\
-\infty & -\infty & -\infty & -\infty\\
-\infty & -\infty & -\infty & -\infty\\
\end{pmatrix},\quad
S_2=
\begin{pmatrix}
-\infty & -\infty & -\infty & -\infty\\
-\infty & -\infty & -\infty & -\infty\\
-\infty & -\infty & 0 & -\infty\\
-\infty & -\infty & -\infty & -\infty\\
\end{pmatrix},\\
S_3&=
\begin{pmatrix}
-\infty & -\infty & -\infty & -\infty\\
-\infty & -\infty & -\infty & -\infty\\
-\infty & -\infty & -\infty & -\infty\\
-\infty & -\infty & -\infty & 0\\
\end{pmatrix}.
\end{split}
\end{equation}
These are Boolean matrices in the max-plus setting, with entries
$\infty,0$ instead of $0,1$.

Further we need to compute Kleene stars $(A^{\gamma_1})^*$,
$((A_2-\lambda_2)^{\gamma_2})^*=(1+A_2)^*$ and
$((A_3-\lambda_3)^{\gamma_3})^*=(2+A_3)^*$, and construct matrices
$C_1$, $R_1$, $C_2$, $R_2$, $C_3$ and $R_3$. The critical parts of
the Kleene stars are shown below, the rest of the elements being
denoted by $\cdot$ as we do not need them:
\begin{equation}
\begin{split}
(A^2)^*&=
\begin{pmatrix}
0 & -1 & -5 & -4\\
-1 & 0 & -6 & -5\\
-5 & -6 & \cdot & \cdot\\
-4 & -5 & \cdot & \cdot
\end{pmatrix},\\
(1\otimes A_2)^*&=
\begin{pmatrix}
\cdot &\cdot & -\infty &\cdot\\
\cdot &\cdot & -\infty &\cdot\\
-\infty & -\infty & 0 & -2\\
\cdot & \cdot & -2 &\cdot
\end{pmatrix}, \quad
(2\otimes A_3)^*=
\begin{pmatrix}
\cdot &\cdot & \cdot & -\infty\\
\cdot &\cdot & \cdot & -\infty\\
\cdot & \cdot & \cdot & -\infty\\
-\infty & -\infty & -\infty & 0
\end{pmatrix}.
\end{split}
\end{equation}
Further we compute the Nachtigall matrices
\begin{equation}
\label{e:nachtmat}
\begin{split}
\nacht_1^{(0)}&=C_1\otimes R_1=
\begin{pmatrix}
0 & -1 & -5 & -4\\
-1 & 0 & -6 & -5\\
-5 & -6 & -10 & -9\\
-4 & -5 & -9 & -8
\end{pmatrix},\\
\nacht_1^{(1)}&=C_1\otimes S_1\otimes R_1=
\begin{pmatrix}
-1 & 0 & -6 & -5\\
0 & -1 & -5 & -4\\
-6 & -5 & -11 & -10\\
-5 & -4 & -10 & -9
\end{pmatrix},\\
\nacht_2^{(0)}&=C_2\otimes R_2=
\begin{pmatrix}
-\infty & -\infty & -\infty & -\infty\\
 -\infty & -\infty & -\infty & -\infty\\
-\infty & -\infty & 0 & -2\\
-\infty & -\infty & -2 & -4
\end{pmatrix},\\
\nacht_3^{(0)}&=C_3\otimes R_3=
\begin{pmatrix}
-\infty & -\infty & -\infty & -\infty\\
 -\infty & -\infty & -\infty & -\infty\\
-\infty & -\infty & -\infty & -\infty\\
-\infty & -\infty & -\infty & 0
\end{pmatrix}.
\end{split}
\end{equation}
The Nachtigall expansion starts to work already at $t=2$. Indeed,
\begin{equation}
\begin{split}
A^2&=
\begin{pmatrix}
0 & -1 & -5 & -4\\
-1 & 0 & -6 & -5\\
-5 & -6 & -2 & -4\\
-4 & -5 & -4 & -4
\end{pmatrix}
=\nacht_1^{(0)}\oplus (-2)\otimes\nacht_2^{(0)}\oplus (-4)\otimes\nacht_3^{(0)},\\
A^3&=
\begin{pmatrix}
-1 & 0 & -6 & -5\\
0 & -1 & -5 & -4\\
-6 & -5 & -3 & -5\\
-5 & -4 & -5 & -6
\end{pmatrix}
=\nacht_1^{(1)}\oplus(-3)\otimes\nacht_2^{(0)}\oplus
(-6)\otimes\nacht_3^{(0)}.
\end{split}
\end{equation}

Starting from $t=4$ the third term can be forgotten:
\begin{equation}
A^4=
\begin{pmatrix}
0 &-1 & -5 & -4\\
-1 & 0 & -6 & -5\\
-5 & -6 & -4 & -6\\
-4 & -5 & -6 & -8
\end{pmatrix}
=\nacht_1^{(0)}\oplus (-4)\otimes\nacht_2^{(0)}.
\end{equation}
The ultimate periodic behavior starts after $T(A)=10$:
\begin{equation}
A^{10}=
\begin{pmatrix}
0 & -1 & -5 & -4\\
-1 & 0 & -6 & -5\\
-5 & -6 & -10 & -9\\
-4 & -5 & -9 & -8
\end{pmatrix}
=\nacht_1^{(0)}=\ultim_1^{(0)}.
\end{equation}

{\em Example 2.}  The following example will illustrate the ultimate
expansion:
\begin{equation}
A=
\begin{pmatrix}
-2 & 0 & -3 & -7 & -\infty & -\infty & -\infty\\
0 & -2 & -5 & -7 & -\infty & -\infty & -\infty\\
-9 & -7 & -9 & -8 & -\infty & -\infty & -\infty\\
-9 & -6 & -4 & -4 & -\infty & -\infty  & -\infty\\
-8 & -5 & -5 & -4 & -1 & -7 & -5\\
-7 & -8 & -5 & -6 & -3 & -6 & -8\\
-6 & -4 & -9 & -3 & -5 & -5 & -5
\end{pmatrix}.
\end{equation}

We compute the elements of the ultimate expansion.

Firstly, $A_1^{\ulabel}=A$, $\lambda_1=0$, and the component
$\crit_1^{\ulabel}=\crit(A)$ consists of two nodes $1,2$ and two
edges $(1,2)$ and $(2,1)$.

On the next step we set all entries in the first four rows and
columns of $A$ to $-\infty$, thus obtaining matrix $A_2^{\ulabel}$.
Its {\em essential} submatrix with finite entries is extracted from
the remaining rows and columns 5 to 7:
\begin{equation}
\label{a2-ess} A_{2\;ess}^{\ulabel}=
\begin{pmatrix}
 -1 & -7 & -5\\
 -3 & -6 & -8\\
 -5 & -5 & -5
\end{pmatrix},\quad
\text{5 to 7 $\times$ 5 to 7}.
\end{equation}
We compute $\lambda_2=-1$, and the component $\crit_2=\crit(A_2)$
consists of the loop $(5,5)$.

The components $\crit_1$ and $\crit_2$ determine Boolean (i.e.,
$0,-\infty$) matrices $S_1$ and $S_2$. We compute $(A^2)^*$ and
$(1\otimes A_2^{\ulabel}) ^*$:
\begin{equation}
(A^2)^*=
\begin{pmatrix}
0 & -2 & -5 & -7 & -\infty & -\infty & -\infty\\
-2 & 0 & -3 & -7 & -\infty & -\infty & -\infty\\
-7 & -9 & 0 & -12 & -\infty & -\infty & -\infty\\
-6 & -8 & -8 & 0 & -\infty & -\infty  & -\infty\\
-5 & -6 & -6 & -5 & 0 & -8 & -6\\
-8 & -7 & -8 & -7 & -4 & 0 & -8\\
-4 & -6 & -7 & -7 & -6 & -10 & 0
\end{pmatrix},
\end{equation}
\begin{equation}
(1\otimes A_{2\;ess}^{\ulabel})^*=
\begin{pmatrix}
 0 & -6 & -4\\
 -2 & 0 & -6\\
 -4 & -4 & 0
\end{pmatrix},\quad
\text{5 to 7 $\times$ 5 to 7}.
\end{equation}

Next we build matrices $C_1^{\ulabel}, R_1^{\ulabel}$ and
$C_2^{\ulabel}, R_2^{\ulabel}$ whose essential parts are shown
below:

\begin{equation}
\label{c1r1}
\begin{split}
C_{1\; ess}^{\ulabel}&=
\begin{pmatrix}
0 & -2 & -7 & -6 & -5 & -8 & -4\\
-2 & 0 & -9 & -8 & -6 & -7 & -6
\end{pmatrix}^T,\ \text{1 to 7$\times$ 1 to 2,}\\
R_{1\; ess}^{\ulabel}&=
\begin{pmatrix}
0 & -2 & -5 & -7 \\
-2 & 0 & -3 & -7
\end{pmatrix},\ \text{1 to 2$\times$ 1 to 4.}\\
\end{split}
\end{equation}

\begin{equation}
\label{c2r2}
\begin{split}
C_{2\; ess}^{\ulabel}&=
\begin{pmatrix}
 0 & -2 & -4
\end{pmatrix}^T,\quad \text{5 to 7$\times$ 5}\\
R_{2\; ess}^{\ulabel}&=
\begin{pmatrix}
0 & -6 & -4
\end{pmatrix},\quad \text{5 $\times$ 5 to 7}.
\end{split}
\end{equation}
Using \eqref{c1r1} and \eqref{c2r2} we compute the ultimate terms
$\ultim_1^{(0)}=C_1^{\ulabel}\otimes R_1^{\ulabel}$,
$\ultim_1^{(1)}=C_1^{\ulabel}\otimes S_1^{\ulabel}\otimes
R_1^{\ulabel}$ and $\ultim_2^{(0)}=C_2^{\ulabel}\otimes
R_2^{\ulabel}$. The ultimate expansion starts to work at $t=9$,
meaning
\begin{equation}
\label{ult-work}
\begin{split}
A^9&=\ultim_1^{(1)}\oplus(-9)\otimes\ultim_2^{(0)},\
A^{10}=\ultim_1^{(0)}\oplus (-10)\otimes\ultim_2^{(0)},\\
A^{11}&=\ultim_1^{(1)}\oplus(-11)\otimes\ultim_2^{(0)},\
A^{12}=\ultim_1^{(0)}\oplus (-12)\otimes\ultim_2^{(0)},\;\ldots
\end{split}
\end{equation}

In this case, the canonical Nachtigall expansion starts to work
already at $t=3$, and after $t=4$ only the first two terms are
essential. In this expansion, matrix $A_2$ results from setting only
the first {\em two} (instead of four) columns and rows to $-\infty$.
The second Nachtigall term $\nacht_2$ is equal to $C_2\otimes R_2$,
where $C_2=C_2^{\ulabel}$ and $R_2\neq R_2^{\ulabel}$ has essential
part
\begin{equation}
R_{2\;ess}=
\begin{pmatrix}
-4 & -3 & 0 & -6 & -4
\end{pmatrix},\quad\text{5 $\times$ 3 to 7}.
\end{equation}

{\em Example 3.} We illustrate the total periodicity. Let
\begin{equation}
A=
\begin{pmatrix}
-\infty & 1 & -\infty & -\infty & -\infty & -\infty\\
-\infty & -\infty & 1 & -\infty & -\infty & -\infty\\
-\infty & -\infty & -\infty & 1 & -\infty & -\infty\\
1 & -\infty & -\infty & -\infty & -\infty & -\infty\\
-\infty & -2 & -\infty & -\infty & -\infty & 0\\
-\infty & -2 & -\infty & -\infty & 0 & -\infty
\end{pmatrix}
\end{equation}
\begin{equation}
B=
\begin{pmatrix}
-\infty & 1 & -\infty & -\infty & -\infty & -\infty\\
-\infty & -\infty & 1 & -\infty & -\infty & -\infty\\
-\infty & -\infty & -\infty & 1 & -\infty & -\infty\\
1 & -\infty & -\infty & -\infty & -\infty & -\infty\\
-\infty & -2 & -\infty & -\infty & -\infty & 0\\
-\infty & -\infty & -2 & -\infty & 0 & -\infty
\end{pmatrix}
\end{equation}
Note that $A$ and $B$ are almost the same, except for the entries
$(6,2)$ and $(6,3)$. In both cases the ultimate expansion coincides
with the canonical Nachtigall expansion, and we have two critical
components $\crit_1=(N_1,E_1)$ with $N_1=\{1,2,3,4\}$,
$E_1=\{(1,2),(2,3),(3,4),(4,1)\}$ and $\crit_2=(N_2,E_2)$ with
$N_2=\{5,6\}$ and $E_2=\{(5,6),(6,5)\}$. The eigenvalues are
$\lambda_1=1$ and $\lambda_2=0$, and the cyclicities are
$\gamma_1=4$ and $\gamma_2=2$, their l.c.m. is $\gamma=4$. Notr that
$M_1^{\ulabel}=N_1$ and $M_2^{\ulabel}=N_2$.

In both cases the component with $\lambda_1$ does not have access to
the component with $\lambda_2$ which is smaller, hence condition 1.
of Theorem \ref{t:totalper} is true meaning that all columns of
$A^t$ and $B^t$ are ultimately periodic. Condition 2. of Theorem
\ref{t:totalper} holds for $A$ but it does not hold for $B$. In
particular, there are only paths of odd length connecting node $6$
to node $3$. Hence $A$ is orbit periodic and $B$ is not.

Consider also condition \eqref{e:cond-feas}. We need the terms of
the ultimate expansion for $t(\modd\;\gamma^{\ulabel})=1$. In each
case there are two terms, which we denote by $\ultim_1^A$ and
$\ultim_2^A$, resp. $\ultim_1^B$ and $\ultim_2^B$, for the case of
$A$, resp. $B$. We have
\begin{equation}
\ultim_1^A=
\begin{pmatrix}
-\infty & 0 & -\infty & -\infty & -\infty & -\infty\\
-\infty & -\infty & 0 & -\infty & -\infty & -\infty\\
-\infty & -\infty & -\infty & 0 & -\infty & -\infty\\
0 & -\infty & -\infty & -\infty & -\infty & -\infty\\
-4 & -3 & -6 & -5 & -\infty & -\infty\\
-4 & -3 & -6 & -5 & -\infty & -\infty
\end{pmatrix}
\end{equation}
\begin{equation}
\ultim_2^B=
\begin{pmatrix}
-\infty & 0 & -\infty & -\infty & -\infty & -\infty\\
-\infty & -\infty & 0 & -\infty & -\infty & -\infty\\
-\infty & -\infty & -\infty & 0 & -\infty & -\infty\\
0 & -\infty & -\infty & -\infty & -\infty & -\infty\\
-\infty & -3 & -\infty & -5 & -\infty & -\infty\\
-4 & -\infty & -3 & -\infty & -\infty & -\infty
\end{pmatrix}
\end{equation}
\begin{equation}
\ultim_{2\;ess}^A=\ultim_{2\;ess}^B=
\begin{pmatrix}
-\infty & 0\\
0 & -\infty
\end{pmatrix},\quad\text{5 to 6 $\times$ 5 to 6.}
\end{equation}
Observe that $\supp(\ultim_2^Ae_i)\subseteq\supp(\ultim_1^Ae_j)$ for
all $i\in M_2^{\ulabel}=\{5,6\}$ and $j\in
M_1^{\ulabel}=\{1,2,3,4\}$, but this condition does not hold for
$B$.

To see the difference between $A^tx$ and $B^tx$, take
$x=[0\;-\infty\;-\infty\;-\infty\;-\infty\; 0]^T$. The sequence
$\{A^tx\}$ is ultimately periodic starting from $t=4$ with period
$4$ and growth rate $1$. In particular the last component of
$\{A^tx\}$ yields the following number sequence for $t\geq 4$:
\begin{equation}
\label{atxtgeq4}
(A^tx)_6=\{1\;1\;1\;1\;5\;5\;5\;5\;9\;9\;9\;9\ldots\},\ t\geq 4.
\end{equation}
The sequence $\{B^tx\}$ is not ultimately periodic. In particular,
the last component of $\{B^tx\}$ yields the following number
sequence for $t\geq 2$:
\begin{equation}
\label{atxtgeq41}
(B^tx)_6=\{0\;0\;0\;1\;0\;4\;0\;5\;0\;8\;0\;9\ldots\},\ t\geq 2,
\end{equation}
which can be expressed as
\begin{equation}
(B^tx)_6=
\begin{cases}
0, &\text{if $t$ is even and $t\geq 2$,}\\
t-3, &\text{if $t=4k+3$ and $k\geq 0$,}\\
t-5, &\text{if $t=4k+5$ and $k\geq 0$.}
\end{cases}
\end{equation}

\section{Acknowledgement}
We thank Peter Butkovi\v{c} for his encouraging support, numerous discussions
and careful reading of the paper. We acknowledge the work of the anonimous reviewer, who helped us to eliminate some mistakes and typos. We are also grateful to Abdulhadi Aminu,
Trivikram Dokka and Glenn Merlet for their comments, help, and advice.

%\bibliographystyle{amsplain}
%\bibliography{csr}
%    Insert the bibliography data here.

\providecommand{\bysame}{\leavevmode\hbox to3em{\hrulefill}\thinspace}
\providecommand{\MR}{\relax\ifhmode\unskip\space\fi MR }
% \MRhref is called by the amsart/book/proc definition of \MR.
\providecommand{\MRhref}[2]{%
  \href{http://www.ams.org/mathscinet-getitem?mr=#1}{#2}
}
\providecommand{\href}[2]{#2}

\end{document}